\documentclass[12pt]{amsart}
\usepackage{amsmath,amssymb,amscd}
\usepackage[usenames,dvipsnames]{color}
\usepackage{graphicx}
\usepackage{longtable}

\newtheorem{propo}{Proposition}[section]
\newtheorem{corol}[propo]{Corollary}
\newtheorem{theor}[propo]{Theorem}
\newtheorem{lemma}[propo]{Lemma}

\theoremstyle{definition}
\newtheorem{defin}[propo]{Definition}

\theoremstyle{remark}
\newtheorem{remar}[propo]{Remark}

\newcommand{\algo}[6]
{\vspace{11pt}\addtocounter{propo}{1}
\noindent{\bf Algorithm \arabic{section}.\arabic{propo}.}
{\bf #1}(#2)\\ {\it #3}.\\
{\bf Input:} #4\\
{\bf Output:} #5\\
\newcounter{#1}
\begin{list}{\textbf{\arabic{#1}.}}{\usecounter{#1}}
#6\end{list}\vspace{3pt}}

\numberwithin{equation}{section}

\newcommand{\al }{\alpha }

\newcommand{\cC }{\mathcal{C}}
\newcommand{\cF }{\mathcal{F}}
\newcommand{\Cm }{C}
\newcommand{\cm }{c}

\newcommand{\GL }{\mathrm{GL}}

\newcommand{\Homsfrom }[2][\Wg (\cC )]{\mathrm{Hom}(#2,#1)}

\newcommand{\id }{\mathrm{id}}
\newcommand{\lspan }{\mathrm{span}}
\newcommand{\ndN }{\mathbb{N}}
\newcommand{\ndQ }{\mathbb{Q}}

\newcommand{\ndZ }{\mathbb{Z}}
\newcommand{\NN }{\ndN }
\newcommand{\QQ }{\ndQ }
\newcommand{\ZZ }{\ndZ }
\newcommand{\rer }[1]{(R\re)^{#1}}
\newcommand{\rfl }{\rho }
\newcommand{\rk }{\mathrm{rk}}
\newcommand{\Ob }{\mathrm{Ob}}

\newcommand{\re }{^\mathrm{re}}
\newcommand{\rsC }{\mathcal{R}}
\newcommand{\s }{\sigma }

\newcommand{\Vol }{\mathrm{Vol}}
\newcommand{\Wg }{\mathcal{W}}
\DeclareMathOperator{\Aut}{Aut}

\DeclareMathOperator{\Hom}{Hom}

\title{Finite Weyl groupoids of rank three}

\author{M.~Cuntz}
\address{Michael Cuntz,
Fachbereich Mathematik,
Universit\"at Kaiserslau\-tern,
Postfach 3049,
D-67653 Kaiserslautern, Germany}
\email{cuntz@mathematik.uni-kl.de}

\author{I.~Heckenberger}
\thanks{I.H. is
supported by the German Research Foundation (DFG) via a Heisenberg
fellowship}
\address{Istv\'an Heckenberger, Philipps-Universit\"at Marburg,
Fachbereich Mathematik und Informatik,
Hans-Meerwein-Stra\ss e,
D-35032 Marburg, Germany}
\email{heckenberger@mathematik.uni-marburg.de}

\begin{document}

\begin{abstract}
We continue our study of Cartan schemes and their Weyl groupoids.
The results in this paper provide an algorithm to
determine connected simply connected Cartan schemes of rank three,
where the real roots form a finite irreducible root system.
The algorithm terminates: Up to equivalence there are
exactly 55 such Cartan schemes, and the number of corresponding real roots
varies between $6$ and $37$. We identify those Weyl groupoids which appear
in the classification of Nichols algebras of diagonal type.
\end{abstract}

\maketitle

\section{Introduction}

Root systems associated with Cartan matrices are widely studied structures in
many areas of mathematics, see \cite{b-BourLie4-6} for the fundaments.
The origins of the theory of root systems go back at least to the study
of Lie groups by Lie, Killing and Cartan. The symmetry of the root system is
commonly known as its Weyl group. Root systems associated with a
family of Cartan matrices appeared first in connection with Lie superalgebras
\cite[Prop.\,2.5.6]{a-Kac77}
and with Nichols algebras \cite{a-Heck06a}, \cite{a-Heck08a}.
The corresponding symmetry is not a group but a
groupoid, and is called the Weyl groupoid of the root system.

Weyl groupoids of root systems properly generalize Weyl groups. The
nice properties of this more general structure
have been the main motivation to develop an axiomatic
approach to the theory, see \cite{a-HeckYam08}, \cite{a-CH09a}.
In particular, Weyl groupoids are generated by reflections and Coxeter
relations, and they satisfy a Matsumoto type theorem \cite{a-HeckYam08}.
To see more clearly the extent of generality it would be desirable to have a
classification of finite Weyl
groupoids.\footnote{In this introduction by a Weyl groupoid
we will mean the Weyl groupoid of a connected Cartan scheme, and
we assume that the real roots associated to the Weyl groupoid form an
irreducible root system in the sense of \cite{a-CH09a}.}
However, already the appearance of a large family of examples of Lie
superalgebras and
Nichols algebras of diagonal type indicated that a classification of
finite Weyl groupoids is probably much more complicated than the
classification of finite Weyl groups. Additionally,
many of the usual classification tools
are not available in this context because of the lack of the adjoint action
and a positive definite bilinear form.

In previous work, see \cite{a-CH09b} and \cite{p-CH09a},
we have been able to determine all finite
Weyl groupoids of rank two.
The result of this classification
is surprisingly nice: We found a close relationship to the
theory of continued fractions and to cluster algebras of type $A$.
The structure of finite rank two Weyl groupoids and the associated root
systems has a natural characterization in terms of triangulations
of convex polygons by non-intersecting diagonals. In particular, there are
infinitely many such groupoids.

At first view there is no reason to assume
that the situation for finite Weyl groupoids of rank three
would be much different
from the rank two case.
In this paper we give some theoretical indications which strengthen the
opposite point of view. For example in Theorem~\ref{cartan_6} we show that the
entries of the Cartan matrices in a finite Weyl groupoid cannot be smaller
than $-7$. Recall that for Weyl groupoids there is no lower bound for the
possible entries of generalized Cartan matrices.
Our main achievement in this paper is to provide an algorithm
to classify finite Weyl groupoids of rank three. Our algorithm terminates
within a short time, and produces a finite list of examples.
In the appendix we list the root systems characterizing
the Weyl groupoids of the classification: There are
$55$ of them which correspond to pairwise
non-isomorphic Weyl groupoids. The number of positive roots in these root
systems varies between $6$ and $37$. Among our root systems are the usual root
systems of type $A_3$, $B_3$, and $C_3$, but for most of the other
examples we don't have yet an explanation.

It is remarkable that the number $37$ has a particular
meaning for simplicial arrangements in the real projective plane.
An arrangement is the complex generated by a family of straight lines not
forming a pencil. The vertices of the complex are the intersection points
of the lines, the edges are the segments of the lines between two vertices,
and the faces are the connected components of the
complement of the set of lines generating the arrangement. An arrangement is
called simplicial, if all faces are triangles. Simplicial arrangements have
been introduced in \cite{a-Melchi41}. The classification of simplicial
arrangements in the real projective plane is an open problem.
The largest known exceptional example is generated by $37$ lines. Gr\"unbaum
conjectures that the list given in \cite{a-Gruenb09} is complete. In our
appendix we provide some data of our root systems which can be used to compare
Gr\"unbaum's list with Weyl groupoids. There is an astonishing analogy between
the two lists, but more work has to be done to be able to explain the precise
relationship. This would be desirable in particular since
our classification of finite Weyl groupoids of rank three does not give any
evidence for the range of solutions besides the explicit computer calculation.

In order to ensure the termination of our algorithm,
besides Theorem \ref{cartan_6} we use a weak convexity property of certain
affine hyperplanes, see Theorem \ref{convex_diff2}: We
can show that any positive root in an affine hyperplane ``next to the origin''
is either simple or can can be written as the sum of a simple root and
another positive root.
Our algorithm finally becomes practicable by the use of
Proposition~\ref{pr:suminR}, which can be interpreted as another
weak convexity property
for affine hyperplanes. It is hard to say which of these theorems
are the most valuable because avoiding any of them makes the
algorithm impracticable (unless one has some replacement).

The paper is organized as follows. We start with two sections
proving the necessary theorems to formulate the algorithm:
The results which do not require that the rank is three are
in Section \ref{gen_res}, the obstructions for rank three in
Section \ref{rk3_obst}.
We then describe the algorithm in the next section. Finally
we summarize the resulting data and make some observations in
the last section.

\vspace{\baselineskip}

\textbf{Acknowledgement.} We would like to thank B. M{\"u}hlherr
for pointing out
to us the importance of the number $37$ for simplicial arrangements in the
real projective plane.

\section{Cartan schemes and Weyl groupoids}\label{gen_res}

We mainly follow the notation in \cite{a-CH09a,a-CH09b}.
The fundaments of the general theory have been developed in
\cite{a-HeckYam08} using a somewhat different terminology.
Let us start by recalling the main definitions.

Let $I$ be a non-empty finite set and
$\{\al _i\,|\,i\in I\}$ the standard basis of $\ndZ ^I$.
By \cite[\S 1.1]{b-Kac90} a generalized Cartan matrix
$\Cm =(\cm _{ij})_{i,j\in I}$
is a matrix in $\ndZ ^{I\times I}$ such that
\begin{enumerate}
  \item[(M1)] $\cm _{ii}=2$ and $\cm _{jk}\le 0$ for all $i,j,k\in I$ with
    $j\not=k$,
  \item[(M2)] if $i,j\in I$ and $\cm _{ij}=0$, then $\cm _{ji}=0$.
\end{enumerate}

Let $A$ be a non-empty set, $\rfl _i : A \to A$ a map for all $i\in I$,
and $\Cm ^a=(\cm ^a_{jk})_{j,k \in I}$ a generalized Cartan matrix
in $\ndZ ^{I \times I}$ for all $a\in A$. The quadruple
\[ \cC = \cC (I,A,(\rfl _i)_{i \in I}, (\Cm ^a)_{a \in A})\]
is called a \textit{Cartan scheme} if
\begin{enumerate}
\item[(C1)] $\rfl _i^2 = \id$ for all $i \in I$,
\item[(C2)] $\cm ^a_{ij} = \cm ^{\rfl _i(a)}_{ij}$ for all $a\in A$ and
  $i,j\in I$.
\end{enumerate}

  Let $\cC = \cC (I,A,(\rfl _i)_{i \in I}, (\Cm ^a)_{a \in A})$ be a
  Cartan scheme. For all $i \in I$ and $a \in A$ define $\s _i^a \in
  \Aut(\ndZ ^I)$ by
  \begin{align}
    \s _i^a (\al _j) = \al _j - \cm _{ij}^a \al _i \qquad
    \text{for all $j \in I$.}
    \label{eq:sia}
  \end{align}
  The \textit{Weyl groupoid of} $\cC $
  is the category $\Wg (\cC )$ such that $\Ob (\Wg (\cC ))=A$ and
  the morphisms are compositions of maps
  $\s _i^a$ with $i\in I$ and $a\in A$,
  where $\s _i^a$ is considered as an element in $\Hom (a,\rfl _i(a))$.
  The category $\Wg (\cC )$ is a groupoid in the sense that all morphisms are
  isomorphisms.
  The set of morphisms of $\Wg (\cC )$ is denoted by $\Hom(\Wg (\cC ))$,
  and we use the notation
  \[ \Homsfrom{a}=\mathop{\cup }_{b\in A}\Hom (a,b) \quad
  \text{(disjoint union)}. \]
  For notational convenience we will often neglect upper indices referring to
  elements of $A$ if they are uniquely determined by the context. For example,
  the morphism $\s _{i_1}^{\rfl _{i_2}\cdots \rfl _{i_k}(a)}
  \cdots \s_{i_{k-1}}^{\rfl _{i_k(a)}}\s _{i_k}^a\in \Hom (a,b)$,
  where $k\in \ndN $, $i_1,\dots,i_k\in I$, and
  $b=\rfl _{i_1}\cdots \rfl _{i_k}(a)$,
  will be denoted by $\s _{i_1}\cdots \s _{i_k}^a$ or by
  $\id _b\s _{i_1}\cdots \s_{i_k}$.
  The cardinality of $I$ is termed the \textit{rank of} $\Wg (\cC )$.
  A Cartan scheme is called \textit{connected} if its Weyl groupoid
  is connected, that is, if for all $a,b\in A$ there exists $w\in \Hom (a,b)$.
  The Cartan scheme is called \textit{simply connected},
  if $\Hom (a,a)=\{\id _a\}$ for all $a\in A$.

  Let $\cC $ be a Cartan scheme. For all $a\in A$ let
  \[ \rer a=\{ \id _a \s _{i_1}\cdots \s_{i_k}(\al _j)\,|\,
  k\in \ndN _0,\,i_1,\dots,i_k,j\in I\}\subset \ndZ ^I.\]
  The elements of the set $\rer a$ are called \textit{real roots} (at $a$).
  The pair $(\cC ,(\rer a)_{a\in A})$ is denoted by $\rsC \re (\cC )$.
  A real root $\al \in \rer a$, where $a\in A$, is called positive
  (resp.\ negative) if $\al \in \ndN _0^I$ (resp.\ $\al \in -\ndN _0^I$).
  In contrast to real roots associated to a single generalized Cartan matrix,
  $\rer a$ may contain elements which are neither positive nor negative. A good
  general theory, which is relevant for example for the study of Nichols
  algebras, can be obtained if $\rer a$ satisfies additional properties.

  Let $\cC =\cC (I,A,(\rfl _i)_{i\in I},(\Cm ^a)_{a\in A})$ be a Cartan
  scheme. For all $a\in A$ let $R^a\subset \ndZ ^I$, and define
  $m_{i,j}^a= |R^a \cap (\ndN_0 \al _i + \ndN_0 \al _j)|$ for all $i,j\in
  I$ and $a\in A$. We say that
  \[ \rsC = \rsC (\cC , (R^a)_{a\in A}) \]
  is a \textit{root system of type} $\cC $, if it satisfies the following
  axioms.
  \begin{enumerate}
    \item[(R1)]
      $R^a=R^a_+\cup - R^a_+$, where $R^a_+=R^a\cap \ndN_0^I$, for all
      $a\in A$.
    \item[(R2)]
      $R^a\cap \ndZ\al _i=\{\al _i,-\al _i\}$ for all $i\in I$, $a\in A$.
    \item[(R3)]
      $\s _i^a(R^a) = R^{\rfl _i(a)}$ for all $i\in I$, $a\in A$.
    \item[(R4)]
      If $i,j\in I$ and $a\in A$ such that $i\not=j$ and $m_{i,j}^a$ is
      finite, then
      $(\rfl _i\rfl _j)^{m_{i,j}^a}(a)=a$.
  \end{enumerate}
  The axioms (R2) and (R3) are always fulfilled for $\rsC \re $.
  The root system $\rsC $ is called \textit{finite} if for all $a\in A$ the
  set $R^a$ is finite. By \cite[Prop.\,2.12]{a-CH09a},
  if $\rsC $ is a finite root system
  of type $\cC $, then $\rsC =\rsC \re $, and hence $\rsC \re $ is a root
  system of type $\cC $ in that case.

In \cite[Def.\,4.3]{a-CH09a} the concept of an \textit{irreducible}
root system of type
$\cC $ was defined. By \cite[Prop.\,4.6]{a-CH09a}, if $\cC $ is a
Cartan scheme and $\rsC $ is a finite root system of type $\cC $, then $\rsC $
is irreducible if and only if
for all $a\in A$
the generalized Cartan matrix $C^a$ is indecomposable.
If $\cC $ is also connected, then it suffices
to require that there exists $a\in A$ such that $C^a$ is indecomposable.

  Let $\cC =\cC (I,A,(\rfl _i)_{i\in I},(\Cm ^a)_{a\in A})$ be a Cartan
  scheme.
  Let $\Gamma $ be a nondirected graph,
  such that the vertices of $\Gamma $ correspond to the elements of $A$.
  Assume that for all $i\in I$ and $a\in A$ with $\rfl _i(a)\not=a$
  there is precisely one edge between the vertices $a$ and $\rfl _i(a)$
  with label $i$,
  and all edges of $\Gamma $ are given in this way.
  The graph $\Gamma $ is called the \textit{object change diagram} of $\cC $.
\begin{figure}
\caption{The object change diagram of a Cartan scheme of rank three (nr.~15 in Table 1)}
\label{fig:14posroots}
\includegraphics[width=6cm,bb=0 10 580 580]{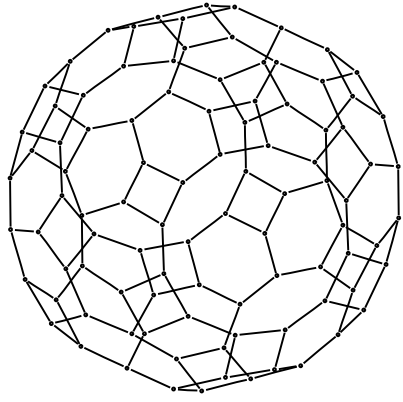}
\end{figure}

  In the rest of this section
  let $\cC=\cC (I,A,(\rfl _i)_{i\in I}, (C^a)_{a\in A})$ be a Cartan scheme
  such that $\rsC \re (\cC )$ is a finite root system.
  For brevity we will write $R^a$ instead of $\rer a$ for all $a\in A$.
  We say that a subgroup $H\subset \ndZ ^I$
  is a \textit{hyperplane} if $\ndZ ^I/H\cong \ndZ $. Then $\rk \,H=\#I-1$
  is the rank of $H$. Sometimes we will identify $\ndZ ^I$ with its image under
  the canonical embedding $\ndZ ^I\to \ndQ \otimes _\ndZ \ndZ ^I\cong \ndQ ^I$.

\begin{lemma}
  \label{le:hyperplane}
  Let $a\in A$ and let $H\subset \ndZ ^I$ be a hyperplane.
  Suppose that $H$ contains $\rk \,H$ linearly independent elements of $R^a$.
  Let $\mathfrak{n}_H$ be a normal vector of $H$ in $\ndQ ^I$ with respect
  to a scalar product $(\cdot ,\cdot )$ on $\ndQ ^I$.
  If $(\mathfrak{n}_H,\al )\ge 0$ for all $\al \in R^a_+$, then $H$ contains
  $\rk \,H$ simple roots,
  and all roots contained in $H$ are linear combinations
  of these simple roots.
\end{lemma}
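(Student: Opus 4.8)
The plan is to translate the geometric hypothesis into a condition on the coordinates of positive roots and then to read off both conclusions from a single nonnegativity argument. Write $H_\ndQ\subset \ndQ ^I$ for the $\ndQ $-span of $H$. Since $\ndZ ^I/H\cong \ndZ $ is torsion-free, $H$ is a pure subgroup, and hence $H=H_\ndQ\cap \ndZ ^I$. Because $\mathfrak{n}_H$ is a normal vector of $H$, the subspace $H_\ndQ$ is exactly the orthogonal complement of the nonzero vector $\mathfrak{n}_H$ in $\ndQ ^I$. Consequently a root $\al \in R^a$ lies in $H$ if and only if $(\mathfrak{n}_H,\al )=0$.

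The heart of the argument is the following observation. Let $\beta =\sum _{i\in I}n_i\al _i\in R^a_+$ be a positive root lying in $H$. Then
\[ 0=(\mathfrak{n}_H,\beta )=\sum _{i\in I}n_i\,(\mathfrak{n}_H,\al _i), \]
and each summand is nonnegative, since $n_i\ge 0$ and $(\mathfrak{n}_H,\al _i)\ge 0$ by hypothesis (as $\al _i\in R^a_+$). Hence every summand vanishes, which forces $(\mathfrak{n}_H,\al _i)=0$ whenever $n_i>0$. Setting $J=\{i\in I\mid (\mathfrak{n}_H,\al _i)=0\}$, the index set of simple roots lying in $H$, this shows that $\beta $ is supported on $J$, i.e.\ $\beta \in \sum _{i\in J}\ndN _0\al _i$. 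By (R1) every root in $H$ is $\pm $ a positive root in $H$, so every root contained in $H$ is a linear combination of $\{\al _i\mid i\in J\}$. This already establishes the second assertion.

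It remains to determine $|J|$. On the one hand, the $\rk \,H=\#I-1$ linearly independent roots of $R^a$ that $H$ contains by assumption all lie in $\lspan _\ndQ \{\al _i\mid i\in J\}$ by the previous paragraph, whence $|J|\ge \#I-1$. On the other hand $|J|=\#I$ is impossible: it would mean $(\mathfrak{n}_H,\al _i)=0$ for all $i\in I$, and since the simple roots span $\ndQ ^I$ this would give $\mathfrak{n}_H=0$, contradicting that $\mathfrak{n}_H$ is a normal vector. Therefore $|J|=\#I-1=\rk \,H$, so $H$ contains exactly $\rk \,H$ simple roots, namely the $\al _i$ with $i\in J$. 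This proves the first assertion.

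The only genuinely delicate point is the support step of the second paragraph, where the vanishing of $(\mathfrak{n}_H,\beta )$ forces termwise vanishing precisely because it is a sum of nonnegative terms; this is where the convexity-type hypothesis $(\mathfrak{n}_H,\al )\ge 0$ on $R^a_+$ enters in an essential way. The remaining ingredients --- purity of $H$, the orthogonality characterization of membership in $H$, and the final dimension count --- are routine linear algebra once (R1) and this hypothesis are in place.
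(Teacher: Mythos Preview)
Your proof is correct and follows the same approach as the paper's, which is the one-line observation that a positive root in $H$ expands as a nonnegative combination of simple roots whose pairing with $\mathfrak{n}_H$ is nonnegative, forcing termwise vanishing. The paper's proof is extremely terse (two sentences), and you have simply unpacked the details --- the purity of $H$, the definition of $J$, and the dimension count $|J|=\#I-1$ --- that the paper leaves implicit.
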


\begin{proof}
  The assumptions imply that any positive root in $H$ is a linear
  combination of simple roots contained in $H$. Since $R^a=R^a_+\cup -R^a_+$,
  this implies the claim.
\end{proof}

\begin{lemma}
  Let $a\in A$ and let $H\subset \ndZ ^I$ be a hyperplane.
  Suppose that $H$ contains $\rk \,H$ linearly independent elements of $R^a$.
  Then there exist $b\in A$ and $w\in \Hom (a,b)$ such that $w(H)$ contains
  $\rk \,H$ simple roots.
  \label{le:hyperplane2}
\end{lemma}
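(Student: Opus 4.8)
The plan is to reduce the statement to a single application of Lemma~\ref{le:hyperplane}. That lemma guarantees that $w(H)$ contains $\rk H$ simple roots as soon as we can find $b\in A$ and $w\in \Hom (a,b)$ for which some normal vector of $w(H)$ has nonnegative scalar product with every element of $R^b_+$; note that $w(H)$ then automatically contains $\rk H$ linearly independent roots, since $w$ is a linear isomorphism carrying $R^a$ onto $R^b$ and $H$ onto $w(H)$. So it suffices to produce a morphism $w$ that pushes all of $R^b_+$ into one of the closed half-spaces bounded by $w(H)$.

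To achieve this I would track how the scalar product transforms along morphisms. Fix a normal vector $\mathfrak{n}_H$ of $H$. For $w\in \Hom (a,b)$ put $\mathfrak{n}_{w(H)}:=w^{-\trans}\mathfrak{n}_H$, where $\trans$ is the transpose with respect to $(\cdot ,\cdot )$; this is a normal vector of $w(H)$, and it satisfies the sign-preservation identity $(\mathfrak{n}_{w(H)},w(\al ))=(\mathfrak{n}_H,\al )$ for all $\al $. I then introduce the nonnegative integer
\[
  \ell (w):=\#\{\beta \in R^b_+\,:\,(\mathfrak{n}_{w(H)},\beta )<0\},
\]
so that the goal becomes finding $w$ with $\ell (w)=0$.

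The key step is a descent. If $\ell (w)>0$, then a positive root on the negative side, being a nonnegative combination of simple roots, forces at least one simple root $\al _i$ to satisfy $(\mathfrak{n}_{w(H)},\al _i)<0$. Passing from $w$ to $\s _i^b w\in \Hom (a,\rfl _i(b))$, I would use the standard behaviour of simple reflections: $\s _i^b(\al _i)=-\al _i$, while $\s _i^b$ maps $R^b_+\setminus \{\al _i\}$ into $R^{\rfl _i(b)}_+$. This last fact I extract directly from \eqref{eq:sia} and (R1)--(R3), since for $j\neq i$ the $\al _j$-coefficient of $\s _i^b(\beta )$ equals that of $\beta $, so a positive root other than $\al _i$ stays positive. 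Combining the permutation of positive roots with sign-preservation, the preimages in $R^b$ of $R^{\rfl _i(b)}_+$ are exactly $(R^b_+\setminus \{\al _i\})\cup \{-\al _i\}$, and a short count gives $\ell (\s _i^b w)=\ell (w)-1$. Iterating, $\ell $ strictly decreases and reaches $0$ after finitely many reflections; at that object every element of $R^b_+$ has nonnegative scalar product with $\mathfrak{n}_{w(H)}$, and Lemma~\ref{le:hyperplane} completes the proof.

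I expect the main obstacle to be the bookkeeping in the descent: one must determine precisely which roots switch membership between $R^b_+$ and $R^{\rfl _i(b)}_+$ under $\s _i^b$ and verify the exact cancellation $\ell (\s _i^b w)=\ell (w)-1$, not merely the inequality $\ell (\s _i^b w)\le \ell (w)$. The identity $(\mathfrak{n}_{w(H)},w(\al ))=(\mathfrak{n}_H,\al )$, together with the permutation property of simple reflections on positive roots, is what makes this clean, and establishing these two ingredients is the technical heart of the argument.
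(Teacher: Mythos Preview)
Your proposal is correct and follows essentially the same route as the paper's proof: induct (equivalently, descend) on the number of positive roots lying strictly on one side of the hyperplane, decrease this count by exactly one via a simple reflection chosen to negate a simple root on the wrong side, and invoke Lemma~\ref{le:hyperplane} once the count reaches zero. The only cosmetic difference is that the paper transports the scalar product along the reflection (writing $(\cdot ,\cdot )'=(\s _j\cdot ,\s _j\cdot )$) instead of transporting the normal vector by the inverse transpose as you do; these are equivalent bookkeeping devices.
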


\begin{proof}
  Let $(\cdot ,\cdot )$ be a scalar product on $\ndQ ^I$. Choose a normal
  vector $\mathfrak{n}_H$ of $H$ in $\ndQ ^I$ with respect to $(\cdot ,\cdot )$.
  Let $m=\# \{\al \in R^a_+\,|\,(\mathfrak{n}_H,\al )<0\}$. Since $\rsC \re
  (\cC )$
  is finite, $m$ is a nonnegative integer. We proceed by induction on $m$.
  If $m=0$, then $H$ contains $\rk \,H$ simple roots
  by Lemma~\ref{le:hyperplane}.
  Otherwise let $j\in I$ with $(\mathfrak{n}_H,\al _j)<0$.
  Let $a'=\rfl _j(a)$ and $H'=\s_j^a(H)$.
  Then $\s_j^a(\mathfrak{n}_H)$ is a normal vector of $H'$ with respect to the
  scalar product $(\cdot ,\cdot )'=
  (\s_j^{\rfl _j(a)}(\cdot ),\s_j^{\rfl _j(a)}(\cdot ))$.
  Since $\s_j^a:R^a_+\setminus \{\al _j\}\to R^{a'}_+\setminus \{\al _j\}$ is a
  bijection and
  $\s_j^a(\al _j)=-\al _j$, we conclude that
  \begin{align*}
    \# \{\beta \in R^{a'}_+\,|\,(\s^a_j(\mathfrak{n}_H),\beta )'<0\}=
    \# \{\al \in R^a_+\,|\,(\mathfrak{n}_H,\al )<0\}-1.
  \end{align*}
  By induction hypothesis there exists $b\in A$ and $w'\in \Hom (a',b)$ such
  that $w'(H')$ contains $\rk \,H'=\rk \,H$ simple roots.
  Then the claim of the lemma holds for $w=w'\s_j^a$.
\end{proof}

The following ``volume'' functions will be useful for our analysis.
Let $k\in \ndN $ with $k\le \#I$.
By the Smith normal form there is a unique left
$\GL (\ndZ ^I)$-invariant right $\GL (\ndZ ^k)$-invariant function
$\Vol _k:(\ndZ ^I)^k\to \ndZ $ such that
\begin{align}
  \Vol _k(a_1\al _1,\dots,a_k\al _k)=|a_1\cdots a_k| \quad
  \text{for all $a_1,\dots,a_k\in \ndZ $,}
\end{align}
where $|\cdot |$ denotes absolute value.
In particular, if $k=1$ and $\beta \in \ndZ ^I\setminus \{0\}$, then $\Vol
_1(\beta )$ is the largest integer $v$ such that $\beta =v\beta '$ for some
$\beta '\in \ndZ ^I$. Further, if $k=\#I$ and
$\beta _1,\dots,\beta _k\in \ndZ ^I$,
then $\Vol _k(\beta _1,\dots,\beta _k)$ is the absolute
value of the determinant of the matrix with columns $\beta _1,\dots,\beta _k$.

Let $a\in A$, $k\in \{1,2,\dots,\#I\}$,
and let $\beta _1,\dots,\beta _k\in R^a$
be linearly independent elements.
We write $V^a(\beta _1,\dots,\beta _k)$ for the unique maximal
subgroup $V\subseteq \ndZ ^I$ of rank $k$ which contains
$\beta _1,\dots,\beta _k$.
Then $\ndZ ^I/V^a(\beta _1,\dots,\beta _k)$ is free.
In particular, $V^a(\beta _1,\dots,\beta _{\#I})=\ndZ ^I$ for all $a\in A$ and
any linearly independent subset $\{\beta _1,\dots,\beta _{\#I}\}$ of $R^a$.

\begin{defin}
  \label{de:base}
Let $W\subseteq \ndZ ^I$ be a cofree subgroup (that is, $\ndZ ^I/W$ is free)
of rank $k$.
We say that $\{\beta _1,\dots,\beta _k\}$ is a \textit{base for $W$
at $a$}, if $\beta _i\in W$ for all $i\in \{1,\dots,k\}$ and
$W\cap R^a\subseteq \sum _{i=1}^k\ndN _0\beta _i\cup
-\sum _{i=1}^k\ndN _0\beta _i$.
\end{defin}

Now we discuss the relationship of linearly independent roots in a root
system. Recall that $\cC $ is a Cartan scheme such that $\rsC \re (\cC )$
is a finite root system of type $\cC $.

\begin{theor} \label{th:genposk}
  Let $a\in A$, $k\in \{1,\dots,\#I\}$,
  and let $\beta _1,\dots,\beta _k\in R^a$ be
  linearly independent roots.
  Then there exist $b\in A$, $w\in \Hom(a,b)$,
  and a permutation $\tau $ of $I$ such that
  \[ w(\beta _i)\in \lspan _\ndZ \{\al _{\tau (1)},
  \ldots ,\al _{\tau (i)}\} \cap R^b_+\]
  for all $i\in \{1,\dots,k\}$.
\end{theor}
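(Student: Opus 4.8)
The plan is to reduce to the case $k=\#I$ and then induct on the rank $n:=\#I$. For the reduction, note that $R^a$ contains all simple roots and hence spans $\ndQ ^I$, so any linearly independent $\beta _1,\dots ,\beta _k$ can be completed to a $\ndQ $-basis $\beta _1,\dots ,\beta _k,\al _{j_{k+1}},\dots ,\al _{j_n}$ by adjoining suitable simple roots. Applying the case $k=n$ to this basis yields $b$, $w\in \Hom (a,b)$ and a permutation $\tau $ with $w(\beta _i)\in \lspan _\ndZ \{\al _{\tau (1)},\dots ,\al _{\tau (i)}\}\cap R^b_+$ for all $i\le n$, and in particular for $i\le k$; so it suffices to treat $k=n$. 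For $n=1$ the claim is immediate from (R2): $\beta _1=\pm \al _{\tau (1)}$ and, if necessary, $\s _{\tau (1)}$ makes it positive.

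For the inductive step assume $\beta _1,\dots ,\beta _n$ is a basis and set $H=V^a(\beta _1,\dots ,\beta _{n-1})$, a hyperplane containing the $n-1$ linearly independent roots $\beta _1,\dots ,\beta _{n-1}$. I first apply the proof of Lemma~\ref{le:hyperplane2}, but with the sign of the normal vector $\mathfrak{n}_H$ fixed so that $(\mathfrak{n}_H,\beta _n)>0$; this is possible since $\beta _n\notin H$. The reduction in that proof reflects by some $\s _j$ whenever the current normal is negative on $\al _j$, while replacing the scalar product by a $\s _j$-invariant one, so at every step the value $(\mathfrak{n},\beta _n)$ is preserved and stays positive. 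Hence I obtain $b_1$, $w_1\in \Hom (a,b_1)$ with $w_1(H)=U:=\lspan _\ndZ \{\al _j\mid j\ne t\}$ for some $t\in I$ (by Lemma~\ref{le:hyperplane}), with $w_1(\beta _i)\in U$ for $i<n$, and with the transported normal $\mathfrak{n}$ of $U$ satisfying $(\mathfrak{n},\al )\ge 0$ for all $\al \in R^{b_1}_+$ and $(\mathfrak{n},w_1(\beta _n))>0$. Since $\mathfrak{n}$ is, up to a positive scalar, the functional reading off the $\al _t$-coordinate, $w_1(\beta _n)$ has positive $\al _t$-coordinate; as a negative root has all coordinates $\le 0$, axiom (R1) forces $w_1(\beta _n)\in R^{b_1}_+$. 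I set $\tau (n)=t$.

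It then remains to straighten $w_1(\beta _1),\dots ,w_1(\beta _{n-1})$ inside $U$. Restricting $\cC $ to the index set $J=I\setminus \{t\}$ gives a Cartan scheme of rank $n-1$ whose real roots at each object are $R^\bullet \cap U$; one checks directly that (R1)--(R4) are inherited, so this restriction is again a finite root system, and by the induction hypothesis there are $b$, $w_2\in \Hom (b_1,b)$ composed of reflections $\s _j$ with $j\in J$, and indices $\tau (1),\dots ,\tau (n-1)$ enumerating $J$, such that $w_2w_1(\beta _i)\in \lspan _\ndZ \{\al _{\tau (1)},\dots ,\al _{\tau (i)}\}\cap R^b_+$ for $i<n$. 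Finally, each $\s _j$ with $j\ne t$ alters only the $\al _j$-coordinate, hence preserves the $\al _t$-coordinate; therefore $w_2w_1(\beta _n)$ still has positive $\al _t$-coordinate and, by the same (R1)-argument, lies in $R^b_+$, while its flag condition $w_2w_1(\beta _n)\in \ndZ ^I=\lspan _\ndZ \{\al _{\tau (1)},\dots ,\al _{\tau (n)}\}$ is automatic. Taking $w=w_2w_1$ completes the induction.

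I expect the main obstacle to be exactly the positivity of the ``top'' root $\beta _n$: the hyperplane reduction naturally controls only the roots lying in $H$, and a careless application can send $\beta _n$ to a negative root. The two points that resolve this are (i) choosing the normal direction with $(\mathfrak{n}_H,\beta _n)>0$ and observing that the reduction of Lemma~\ref{le:hyperplane2} preserves this sign, and (ii) noting that the subsequent morphism $w_2$, built from reflections avoiding the index $t$, fixes the now positive $\al _t$-coordinate, so positivity of $\beta _n$ cannot be destroyed. A secondary point needing care is the verification that the parabolic restriction to $J$ is genuinely a finite root system, which is what legitimises the induction on the rank.
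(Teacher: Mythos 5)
Your proof is correct and follows essentially the same route as the paper: reduce to $k=\#I$, induct on the rank, move $H=V^a(\beta _1,\dots ,\beta _{n-1})$ onto a coordinate hyperplane via Lemma~\ref{le:hyperplane2}, and finish by applying the induction hypothesis in the rank-$(n-1)$ restriction, using that reflections indexed by $J$ fix the $\al _t$-coordinate so that (R1) keeps $\beta _n$ positive. The only difference is cosmetic: where you track the sign of the transported normal vector through the reduction to keep $\beta _n$ positive, the paper simply composes with the longest element of $\Wg (\cC )$ whenever $v(\beta _n)$ comes out negative.
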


\begin{proof}
  Let $r=\#I$.
  Since $R^a$ contains $r$ simple roots, any linearly independent subset of
  $R^a$ can be enlarged to a linearly independent subset of $r$ elements.
  Hence it suffices to prove the theorem for $k=r$.
  We proceed by induction on $r$.
  If $r=1$, then the claim holds.
  
  Assume that $r>1$.
  Lemma~\ref{le:hyperplane2} with
  $H=V^a(\beta _1,\dots,\beta _{r-1})$
  tells that there exist $d\in A$ and $v\in \Hom (a,d)$ such that
  $v(H)$ is spanned by simple roots.
  By multiplying $v$ from the left with
  the longest element of $\Wg (\cC )$ in the case that
  $v(\beta _r)\in -\ndN _0^I$,
  we may even assume that $v(\beta _r)\in \ndN _0^I$.
  Now let $J$ be the subset of $I$ such that $\#J=r-1$ and
  $\al _i\in v(H)$ for all $i\in J$. Consider the restriction
  $\rsC \re (\cC )|_{J}$
  of $\rsC \re (\cC )$ to the index set $J$, see \cite[Def.~4.1]{a-CH09a}.
  Since $v(\beta _i)\in H$ for all $i\in \{1,\dots,r-1\}$,
  induction hypothesis provides us with $b\in A$, $u\in \Hom (d,b)$,
  and a permutation $\tau '$ of $J$
  such that $u$ is a product of simple reflections $\s_i$, where $i\in J$,
  and
  \[ uv(\beta _n)\in 
  \lspan _\ndZ \{\al _{\tau '(j_1)}, \ldots ,\al _{\tau '(j_n)}\}
  \cap R^b_+\]
  for all $n\in \{1,2,\dots,r-1\}$, where $J=\{j_1,\dots,j_{r-1}\}$.
  Since $v(\beta _r)\notin v(H)$ and $v(\beta _r)\in \ndN _0^I$,
  the $i$-th entry of
  $v(\beta _r)$, where $i\in I\setminus J$, is positive. This entry does not
  change if we apply $u$. Therefore
  $uv(\beta _r)\in \ndN _0^I$,
  and hence the theorem holds with $w=uv\in \Hom (a,b)$ and with
  $\tau $ the unique permutation with $\tau (n)=\tau '(j_n)$
  for all $n\in \{1,\dots,r-1\}$.
\end{proof}

\begin{corol}\label{simple_rkk}
  Let $a\in A$, $k\in \{1,\dots,\#I\}$,
  and let $\beta _1,\dots,\beta _k\in R^a$
  be linearly independent elements. Then $\{\beta _1,\dots,\beta _k\}$ is a
  base for $V^a(\beta _1,\dots,\beta _k)$ at $a$ if and only if
  there exist $b\in A$, $w\in \Hom(a,b)$,
  and a permutation $\tau $ of $I$ such that
  $w(\beta _i)=\al _{\tau (i)}$ for all $i\in \{1,\dots,k\}$.
  In this case $\Vol _k(\beta _1,\dots,\beta _k)=1$.
\end{corol}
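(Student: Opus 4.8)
The plan is to prove the two implications of the equivalence separately and then read off the volume statement. Throughout I would write $V=V^a(\beta _1,\dots,\beta _k)$ and use the basic observation that any $w\in \Hom (a,b)$, being a composition of the maps $\s _i$, lies in $\Aut (\ndZ ^I)$ and satisfies $w(R^a)=R^b$ by (R3). Consequently $w$ maps $V$ isomorphically onto $V^b(w(\beta _1),\dots,w(\beta _k))$ and carries a base at $a$ to a base at $b$, because the defining inclusion $V\cap R^a\subseteq \sum _i\ndN _0\beta _i\cup -\sum _i\ndN _0\beta _i$ is preserved under the lattice automorphism $w$. Establishing this transport-of-structure principle first makes both directions short.

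For the implication from the normal form to the base property, assume $w(\beta _i)=\al _{\tau (i)}$ for some $b$, $w\in \Hom (a,b)$ and permutation $\tau $. Then $w(V)$ is the maximal rank $k$ subgroup with free quotient containing $\al _{\tau (1)},\dots,\al _{\tau (k)}$, hence $w(V)=\lspan _\ndZ \{\al _{\tau (1)},\dots,\al _{\tau (k)}\}$. A root lying in this coordinate subgroup has all coordinates outside $\{\tau (1),\dots,\tau (k)\}$ equal to zero, so by (R1) it is a nonnegative or nonpositive integer combination of $\al _{\tau (1)},\dots,\al _{\tau (k)}$. Thus $w(V)\cap R^b\subseteq \sum _i\ndN _0\al _{\tau (i)}\cup -\sum _i\ndN _0\al _{\tau (i)}$, and applying $w^{-1}$ yields the base property of $\{\beta _1,\dots,\beta _k\}$ for $V$ at $a$.

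The substantial direction is the converse, and this is where I expect the real work to lie. Suppose $\{\beta _1,\dots,\beta _k\}$ is a base for $V$ at $a$. Theorem~\ref{th:genposk} supplies $b$, $w\in \Hom (a,b)$ and a permutation $\tau $ with $w(\beta _i)\in \lspan _\ndZ \{\al _{\tau (1)},\dots,\al _{\tau (i)}\}\cap R^b_+$; since the $w(\beta _i)$ are linearly independent they span $\lspan _\ndQ \{\al _{\tau (1)},\dots,\al _{\tau (k)}\}$, and saturation gives $w(V)=\lspan _\ndZ \{\al _{\tau (1)},\dots,\al _{\tau (k)}\}$ as above. By the first paragraph $\{w(\beta _1),\dots,w(\beta _k)\}$ is then a base for $w(V)$ at $b$. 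Each $\al _{\tau (i)}$ lies in $w(V)\cap R^b$ and is positive, so it is a nonnegative integer combination $\al _{\tau (i)}=\sum _j n_{ij}w(\beta _j)$; conversely each positive root $w(\beta _j)\in w(V)$ expands as $w(\beta _j)=\sum _l m_{jl}\al _{\tau (l)}$ with $m_{jl}\in \ndN _0$. Linear independence forces the nonnegative integer matrices $(n_{ij})$ and $(m_{jl})$ to be mutually inverse. The crux is then a height comparison: writing $\mathrm{ht}(\gamma )=\sum _{p\in I}\gamma _p$ for $\gamma =\sum _p\gamma _p\al _p$, every positive root $w(\beta _j)$ has height at least $1$, so from $1=\mathrm{ht}(\al _{\tau (i)})=\sum _j n_{ij}\,\mathrm{ht}(w(\beta _j))$ exactly one $n_{ij}$ equals $1$ and the corresponding $w(\beta _j)$ has height $1$, i.e.\ is simple; say $w(\beta _{\pi (i)})=\al _{\tau (i)}$. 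Distinctness of the $\al _{\tau (i)}$ makes $\pi $ injective, hence a permutation of $\{1,\dots,k\}$, so every $w(\beta _j)$ is simple, and after replacing $\tau $ by $\tau \circ \pi ^{-1}$ one gets $w(\beta _j)=\al _{\tau (j)}$ for all $j$, as required.

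Finally, the volume claim follows once the normal form is available: by the left $\GL (\ndZ ^I)$-invariance of $\Vol _k$ and $w\in \GL (\ndZ ^I)$ one has $\Vol _k(\beta _1,\dots,\beta _k)=\Vol _k(\al _{\tau (1)},\dots,\al _{\tau (k)})$, and a further permutation automorphism of $\ndZ ^I$ together with the normalization $\Vol _k(\al _1,\dots,\al _k)=1$ gives the value $1$. The only genuinely delicate point is the height argument in the converse direction, which is what upgrades the weak conclusion of Theorem~\ref{th:genposk} (roots landing in the correct flag of spans) to the sharp statement that the base elements become simple roots; the remaining steps are transport of the base condition under the automorphism $w$ and are routine.
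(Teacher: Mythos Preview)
Your proof is correct and rests on the same key idea as the paper's: apply Theorem~\ref{th:genposk}, transport the base property along $w$, and then use that a simple root $\al _{\tau (i)}$ written as a nonnegative integer combination of positive roots must equal one of them (the height argument). The only organizational difference is that the paper exploits the triangular flag $w(\beta _i)\in \lspan _\ndZ \{\al _{\tau (1)},\dots ,\al _{\tau (i)}\}$ to argue inductively that $w(\beta _i)=\al _{\tau (i)}$ with the \emph{same} $\tau $, whereas you ignore the flag, prove only that each $\al _{\tau (i)}$ coincides with some $w(\beta _{\pi (i)})$, and then adjust $\tau $ by the permutation $\pi $; your observation that $(n_{ij})$ and $(m_{jl})$ are mutually inverse is correct but not actually needed for your height argument.
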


\begin{proof} The if part of the claim holds by definition of a base and by
  the axioms for root systems.

  Let $b,w$ and $\tau $ be as in Theorem~\ref{th:genposk}. Let $i\in
  \{1,\dots,k\}$.
  The elements $w(\beta _1),\dots,w(\beta _i)$ are linearly
  independent and are contained in $V^b(\al _{\tau (1)}, \dots ,
  \al _{\tau (i)})$. Thus
  $\al _{\tau (i)}$ is a rational linear combination of
  $w(\beta _1),\dots,w(\beta _i)$. Now by assumption,
  $\{w(\beta _1),\dots,w(\beta _k)\}$ is a base for
  $V^b(w(\beta _1),\dots,w(\beta _k))$ at $b$. Hence $\al _{\tau
  (i)}$ is a linear combination of the positive roots
  $w(\beta _1),\dots,w(\beta _i)$ with nonnegative integer coefficients.
  This is possible only if $\{w(\beta _1),\dots,w(\beta _i)\}$ contains
  $\al _{\tau (i)}$. By
  induction on $i$ we obtain that $\al _{\tau (i)}=w(\beta _i)$.
\end{proof}

In the special case $k=\#I$ the above corollary tells that the bases of $\ndZ
^I$ at an object $a\in A$ are precisely those subsets which
can be obtained as the image, up to a permutation, of
the standard basis of $\ndZ ^I$
under the action of an element of $\Wg (\cC )$.

In \cite{p-CH09a} the notion of an $\cF $-sequence was given, and it was used
to explain the structure of root systems of rank two.
Consider on $\ndN _0^2$ the total ordering $\le _\ndQ $,
where $(a_1,a_2)\le _\ndQ (b_1,b_2)$
if and only if $a_1 b_2\le a_2 b_1$. A finite sequence $(v_1,\dots ,v_n)$ of
vectors in $\ndN _0^2$ is an $\cF $-sequence if and only if $v_1<_\ndQ v_2
<_\ndQ \cdots <_\ndQ v_n$ and one of the following holds.
\begin{itemize}
  \item $n=2$, $v_1=(0,1)$, and $v_2=(1,0)$.
  \item $n>2$ and there exists $i\in \{2,3,\dots,n-1\}$
such that $v_i=v_{i-1}+v_{i+1}$ and $(v_1,\dots,v_{i-1}.v_{i+1},\dots,v_n)$
is an $\cF $-sequence.
\end{itemize}
In particular, any $\cF $-sequence of length $\ge 3$ contains $(1,1)$.

\begin{propo} \label{pr:R=Fseq} \cite[Prop.\,3.7]{p-CH09a}
  Let $\cC $ be a Cartan scheme of rank two.
  Assume that $\rsC \re (\cC )$ is a finite root system.
  Then for any $a\in A$
  the set $R^a_+$ ordered by $\le _\QQ$ is an $\cF $-sequence.
\end{propo}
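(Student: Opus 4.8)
The plan is to read off $R^a_+$, sorted by $\le_\QQ$, as a strictly increasing sequence and then recognise it as an $\cF$-sequence via the mediant characterisation. First I would record the two endpoints: in $\ndN_0^2$ the vector $\al_2=(0,1)$ is the unique $\le_\QQ$-minimal positive root and $\al_1=(1,0)$ the unique maximal one, since a positive root of slope $0$ (resp.\ $\infty$) must be a multiple of $\al_2$ (resp.\ $\al_1$). By Corollary~\ref{simple_rkk} applied with $k=1$, every real root $\beta$ satisfies $\Vol_1(\beta)=1$, i.e.\ is primitive; hence two distinct positive roots cannot be parallel and $R^a_+$ has pairwise distinct slopes. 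Writing $R^a_+=\{\gamma_1,\dots,\gamma_n\}$ with $\gamma_1<_\QQ\cdots<_\QQ\gamma_n$, we obtain a strictly increasing sequence with $\gamma_1=\al_2$ and $\gamma_n=\al_1$, and it remains to verify the recursive clause of the definition.

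The heart of the argument is the claim that consecutive roots span the lattice, i.e.\ $\Vol_2(\gamma_i,\gamma_{i+1})=1$ for all $i$. To prove this I would use a reduced decomposition $\s_{i_1}\cdots\s_{i_n}$ of the longest element of $\Wg(\cC)$ starting at $a$, whose length equals $\#R^a_+=n$, together with the standard fact (valid for finite Weyl groupoids) that the roots $\beta_k=\s_{i_1}\cdots\s_{i_{k-1}}(\al_{i_k})$ are exactly the positive roots, each occurring once. Because $\det\s_i=-1$ for every reflection and the word alternates between the two indices in rank two, one computes
\begin{align*}
\det(\beta_k,\beta_{k+1})=\det(\s_{i_1}\cdots\s_{i_{k-1}})\,\det(\al_{i_k},\al_{i_{k+1}})=(-1)^{k-1}\det(\al_{i_k},\al_{i_{k+1}}),
\end{align*}
and the two sign factors cancel, so $\det(\beta_k,\beta_{k+1})$ is constant and equal to $\pm1$. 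A constant sign forces the slopes of $\beta_1,\dots,\beta_n$ to be strictly monotone, so $(\beta_1,\dots,\beta_n)$ is, up to reversal, the sorted sequence $(\gamma_1,\dots,\gamma_n)$; in particular $\Vol_2(\gamma_i,\gamma_{i+1})=|\det(\gamma_i,\gamma_{i+1})|=1$. I expect this to be the main obstacle, since it rests on matching the reflection order of the longest element with the $\le_\QQ$-order and on the bijective enumeration of positive roots by a reduced word. A self-contained alternative is to transport an arbitrary consecutive pair to one of the extremal pairs $\{\al_2,\gamma_2\}$ or $\{\gamma_{n-1},\al_1\}$ using $\s_1,\s_2$: these preserve $\Vol_2$ and, being order-reversing bijections of $R^a_+$ minus the reflected simple root, preserve consecutiveness; the neighbour of a simple root has its complementary coordinate equal to $1$, whence $\Vol_2=1$ for the extremal pairs. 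The delicate point in that route is the termination (a height decrease) of the reduction.

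Granting $\Vol_2(\gamma_i,\gamma_{i+1})=1$, I would finish with a purely combinatorial lemma proved by induction on $n$: a strictly $\le_\QQ$-increasing sequence of primitive vectors $v_1=(0,1),\dots,v_n=(1,0)$ in $\ndN_0^2$ with $\Vol_2(v_i,v_{i+1})=1$ throughout is an $\cF$-sequence. The sign being forced by increasing slope, consecutive determinants all equal $-1$. For $n=2$ this is the base clause. For $n>2$, choose an interior index $m$ for which the sum of the two coordinates of $v_m$ is maximal. Writing $v_{m+1}=p\,v_{m-1}+q\,v_m$, the identity $-1=\det(v_m,v_{m+1})=p\det(v_m,v_{m-1})=p$ gives $p=-1$, hence $v_{m-1}+v_{m+1}=q\,v_m$ with $q\ge1$ by positivity; additivity of the coordinate sum together with the maximality of $v_m$ then forces $q\le2$. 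The case $q=2$ would place $v_{m-1},v_m,v_{m+1}$ equally spaced, with common step $(\delta,-\delta)$ and $\delta\ge1$, on a line $x+y=N$ with $N\ge2$, giving $\det(v_{m-1},v_m)=-\delta N\ne-1$, a contradiction. Hence $q=1$ and $v_m=v_{m-1}+v_{m+1}$. Deleting $v_m$ leaves a strictly increasing sequence with the same endpoints, and $\det(v_{m-1},v_{m+1})=\det(v_{m-1},v_m)=-1$ shows the determinant hypothesis survives, so the induction hypothesis supplies the required mediant clause. This identifies the sorted $R^a_+$ with an $\cF$-sequence and completes the proof.
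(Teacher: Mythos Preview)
The paper does not prove Proposition~\ref{pr:R=Fseq}; it is quoted as \cite[Prop.\,3.7]{p-CH09a} without argument. So there is no in-paper proof to compare against. Your proposal, however, is a sound self-contained argument and would be acceptable as an independent proof.

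One small correction in your main step. You write
\[
\det(\beta_k,\beta_{k+1})=\det(\s_{i_1}\cdots\s_{i_{k-1}})\,\det(\al_{i_k},\al_{i_{k+1}}),
\]
but this factorisation is not literally true, since $\beta_{k+1}=\s_{i_1}\cdots\s_{i_k}(\al_{i_{k+1}})$ carries an extra $\s_{i_k}$. The fix is immediate: $\s_{i_k}(\al_{i_{k+1}})=\al_{i_{k+1}}-c\,\al_{i_k}$ for some integer $c$, so after pulling out $\s_{i_1}\cdots\s_{i_{k-1}}$ you get $\det(\al_{i_k},\al_{i_{k+1}}-c\,\al_{i_k})=\det(\al_{i_k},\al_{i_{k+1}})$, and your conclusion stands. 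With this adjustment, the monotonicity of slopes and hence the identification of the reflection order with the $\le_\QQ$-order is correct. The enumeration of $R^a_+$ by a reduced expression for the longest element is indeed available in the groupoid setting (it is implicit in the paper's repeated use of the longest element and follows from \cite[Cor.~3]{a-HeckYam08}); you should cite this rather than leave it as folklore.

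Your combinatorial lemma (any strictly $\le_\QQ$-increasing primitive sequence from $(0,1)$ to $(1,0)$ with unit consecutive determinants is an $\cF$-sequence) is correct and cleanly argued; the elimination of $q=2$ via $\det(v_{m-1},v_m)=-N\delta$ is the right computation. Altogether your route is somewhat more elementary than the original reference, which develops the $\cF$-sequence theory in tandem with the continued-fraction/cluster picture; your argument trades that machinery for the single longest-word enumeration and a direct lattice induction.
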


\begin{propo} \label{pr:sumoftwo} \cite[Cor.~3.8]{p-CH09a}
  Let $\cC $ be a Cartan scheme of rank two.
  Assume that $\rsC \re (\cC )$ is a finite root system.
  Let $a\in A$ and let $\beta \in R^a_+$. Then either $\beta $ is simple or it
  is the sum of two positive roots.
\end{propo}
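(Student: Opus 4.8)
The plan is to deduce the statement directly from the structure theorem for rank two root systems, Proposition~\ref{pr:R=Fseq}, together with a purely combinatorial induction on the length of an $\cF $-sequence. First I would record that for rank two the simple roots are $\al _1=(1,0)$ and $\al _2=(0,1)$, and that by (R2) both lie in $R^a_+$. With respect to the ordering $\le _\QQ $ one checks directly from the definition that $(0,1)$ is the smallest and $(1,0)$ the largest element of $\ndN _0^2\setminus \{0\}$, so in the $\cF $-sequence $(v_1,\dots ,v_n)$ obtained from Proposition~\ref{pr:R=Fseq} one has $v_1=(0,1)=\al _2$ and $v_n=(1,0)=\al _1$. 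Since the ordering is strict, every interior element $v_j$ with $1<j<n$ differs from both simple roots; hence a root $\beta \in R^a_+$ is non-simple precisely when it occurs as such an interior element of the sequence.

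It therefore suffices to establish the following combinatorial claim: in any $\cF $-sequence $(v_1,\dots ,v_n)$ every interior element $v_j$ (that is, with $1<j<n$) is the sum of two elements of the sequence. I would prove this by induction on $n$. For $n=2$ there are no interior elements and the claim is vacuous. For $n>2$ the defining property of an $\cF $-sequence provides an index $i\in \{2,\dots ,n-1\}$ with $v_i=v_{i-1}+v_{i+1}$ such that the shortened sequence $(v_1,\dots ,v_{i-1},v_{i+1},\dots ,v_n)$ is again an $\cF $-sequence. If $j=i$, then $v_j=v_{i-1}+v_{i+1}$ is already exhibited as a sum of two elements of the sequence. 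If $j\ne i$, then $v_j$ remains an interior element of the shortened $\cF $-sequence, so by the induction hypothesis it is a sum of two of its elements, which are also elements of the original sequence.

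Combining the two parts finishes the argument: a non-simple $\beta \in R^a_+$ is an interior element of the $\cF $-sequence $R^a_+$, hence equals $\gamma +\delta $ for two elements $\gamma ,\delta \in R^a_+$, that is, two positive roots. The one point requiring care is that the naive guess $\beta =v_{j-1}+v_{j+1}$ (the sum of the immediate neighbours) is false in general: after several mediant insertions an interior element need no longer be the sum of its current neighbours, although it always was the mediant of its neighbours at the stage when it was inserted. This is exactly why the recursive definition of $\cF $-sequences, rather than a one-step neighbour argument, must be exploited, and it is the only genuinely non-routine point of the proof.
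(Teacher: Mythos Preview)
Your argument is correct. The paper itself does not prove this statement; it merely imports it as \cite[Cor.~3.8]{p-CH09a}, so there is no proof here to compare against. Your derivation from Proposition~\ref{pr:R=Fseq} is exactly the natural one: the endpoints of any $\cF$-sequence are $(0,1)$ and $(1,0)$ (immediate from the recursive definition), hence the simple roots sit at the ends, and your induction on the length correctly shows every interior entry is a sum of two entries. Since Proposition~\ref{pr:R=Fseq} is \cite[Prop.~3.7]{p-CH09a} and the present statement is its Corollary~3.8 in that source, your route is almost certainly the intended one there as well. The closing caveat you add --- that $v_j$ need not equal $v_{j-1}+v_{j+1}$ for its \emph{current} neighbours --- is accurate and worth keeping.
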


\begin{corol} \label{co:r2conv}
  Let $a\in A$, $n\in \ndN $,
  and let $\al ,\beta \in R^a$ such that $\beta -n\al \in R^a$.
  Assume that $\{\al ,\beta -n\al \}$ is a base for $V^a(\al ,\beta )$ at $a$.
  Then $\beta -k\al \in R^a$ for all $k\in \{1,2,\dots ,n\}$.
\end{corol}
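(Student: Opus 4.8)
The plan is to reduce everything to a rank-two computation inside the plane spanned by $\al $ and $\beta $, and then to exploit the ``sum of two positive roots'' property. First I would observe that since $\{\al ,\beta -n\al \}$ is a base for $V^a(\al ,\beta )$ at $a$, these two roots are linearly independent, so Corollary~\ref{simple_rkk} (applied with $k=2$ to the base $\beta _1=\al $, $\beta _2=\beta -n\al $) supplies $b\in A$, $w\in \Hom (a,b)$, and a permutation $\tau $ of $I$ with $w(\al )=\al _{\tau (1)}$ and $w(\beta -n\al )=\al _{\tau (2)}$. Because $w$ maps $R^a$ bijectively onto $R^b$, the statement $\beta -k\al \in R^a$ is equivalent to $w(\beta )-k\al _{\tau (1)}\in R^b$, and here $w(\beta )=n\al _{\tau (1)}+\al _{\tau (2)}$. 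Thus it suffices to prove that $m\al _{\tau (1)}+\al _{\tau (2)}\in R^b$ for every $m\in \{0,1,\dots ,n\}$.

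Next I would pass to the rank-two restriction $\rsC \re (\cC )|_J$ with $J=\{\tau (1),\tau (2)\}$; its root system is again finite (its roots are those of $R^b$ lying in the $J$-span) and its simple roots at $b$ are exactly $\al _{\tau (1)}$ and $\al _{\tau (2)}$. Since each vector $m\al _{\tau (1)}+\al _{\tau (2)}$ lies in the $J$-span, membership in $R^b$ coincides with being a positive root of this restriction, so Proposition~\ref{pr:sumoftwo} becomes available. As $n\ge 1$, the root $n\al _{\tau (1)}+\al _{\tau (2)}$ is not simple, hence it is a sum of two positive roots of the restriction.

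The combinatorial heart is that this decomposition is forced. If $n\al _{\tau (1)}+\al _{\tau (2)}=u+v$ with $u,v$ positive roots, then the $\al _{\tau (2)}$-coefficients of $u$ and $v$ are nonnegative integers summing to $1$, so exactly one summand, say $u$, lies on the line $\ndZ \al _{\tau (1)}$; by axiom (R2) this forces $u=\al _{\tau (1)}$, whence $v=(n-1)\al _{\tau (1)}+\al _{\tau (2)}\in R^b$. I would then run this as a downward induction: for each $m\ge 1$ the root $m\al _{\tau (1)}+\al _{\tau (2)}$ is non-simple, so the same argument yields $(m-1)\al _{\tau (1)}+\al _{\tau (2)}\in R^b$, continuing down to $m=0$, where $\al _{\tau (2)}$ is simple and already a root. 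This gives $m\al _{\tau (1)}+\al _{\tau (2)}\in R^b$ for all $m\in \{0,\dots ,n\}$, and translating back through $w$ (with $m=n-k$) yields $\beta -k\al \in R^a$ for all $k\in \{1,\dots ,n\}$.

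The step I expect to carry the real weight is the rank-two reduction rather than the final induction: one must recognize that the base hypothesis is precisely what licenses Corollary~\ref{simple_rkk}, and one must be sure the restricted root system is finite so that Proposition~\ref{pr:sumoftwo} applies. By contrast the forced decomposition and the induction are short once the reduction is in place. (One could alternatively replace Proposition~\ref{pr:sumoftwo} by the $\cF $-sequence description of Proposition~\ref{pr:R=Fseq}, but the sum-of-two-roots formulation gives the cleaner argument here.)
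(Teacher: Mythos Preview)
Your proof is correct and follows the same route as the paper: apply Corollary~\ref{simple_rkk} to map $\al $ and $\beta -n\al $ to simple roots $\al _i,\al _j$, then use Proposition~\ref{pr:sumoftwo} together with (R2) to work down from $n\al _i+\al _j$ to $\al _j$. The paper compresses your restriction step and your downward induction into a single sentence (``Hence $(n-k)\al _i+\al _j\in R^b$ for all $k$ by Proposition~\ref{pr:sumoftwo} and (R2)''), but the content is identical; your write-up simply makes explicit the passage to the rank-two restriction and the forced form of the decomposition.
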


\begin{proof}
  By Corollary~\ref{simple_rkk} there exist $b\in A$, $w\in \Hom (a,b)$,
  and $i,j\in I$ such that $w(\al )=\al _i$, $w(\beta -n\al )=\al _j$.
  Then $n\al _i+\al _j=w(\beta )\in R^b_+$.
  Hence $(n-k)\al _i+\al _j\in R^b$
  for all $k\in \{1,2,\dots,n\}$ by Proposition~\ref{pr:sumoftwo} and (R2).
  This yields the claim of the corollary.
\end{proof}

\begin{corol} \label{co:cij}
  Let $a\in A$, $k\in \ndZ $, and $i,j\in I$ such that $i\not=j$.
  Then $\al _j+k\al _i\in R^a$ if and only if $0\le k\le -c^a_{i j}$,
\end{corol}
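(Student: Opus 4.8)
The plan is to read the statement as describing the $\al _i$-root string through the simple root $\al _j$, and to prove the two implications separately, using only the axioms (R1) and (R3), the reflection formula~\eqref{eq:sia}, and the rank-two convexity statement Corollary~\ref{co:r2conv}.

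\emph{Necessity.} Suppose $\al _j+k\al _i\in R^a$. Since its $\al _j$-coordinate equals $1>0$, this element cannot lie in $-\ndN _0^I$, so by (R1) it is positive, i.e.\ $\al _j+k\al _i\in \ndN _0^I$; this already forces $k\ge 0$. For the upper bound I would apply $\s _i^a$: by~\eqref{eq:sia} we have $\s _i^a(\al _j+k\al _i)=\al _j+(-\cm ^a_{ij}-k)\al _i$, and by (R3) this lies in $R^{\rfl _i(a)}$. Its $\al _j$-coordinate is again $1$, so the same (R1) argument, now at the object $\rfl _i(a)$, shows it is positive; hence $-\cm ^a_{ij}-k\ge 0$, that is, $k\le -\cm ^a_{ij}$.

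\emph{Sufficiency.} Set $n=-\cm ^a_{ij}\ge 0$. First I would place the top of the string in $R^a$. Applying $\s _i^{\rfl _i(a)}$ to the simple root $\al _j\in R^{\rfl _i(a)}$ and using (C2) to replace $\cm ^{\rfl _i(a)}_{ij}$ by $\cm ^a_{ij}$ gives $\s _i^{\rfl _i(a)}(\al _j)=\al _j+n\al _i$; since $\s _i^{\rfl _i(a)}$ maps $R^{\rfl _i(a)}$ onto $R^{\rfl _i(\rfl _i(a))}=R^a$ by (C1) and (R3), we obtain $\al _j+n\al _i\in R^a$. I would then fill in the intermediate roots with Corollary~\ref{co:r2conv}, taking $\al =\al _i$ and $\beta =\al _j+n\al _i$, so that $\beta -n\al =\al _j\in R^a$. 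The pair $\{\al _i,\al _j\}$ is a base for $V^a(\al _i,\al _j)=V^a(\al _i,\beta )$ at $a$ by the ``if'' direction of Corollary~\ref{simple_rkk} (take $b=a$, $w=\id _a$, and any permutation $\tau $ of $I$ with $\tau (1)=i$ and $\tau (2)=j$). Corollary~\ref{co:r2conv} then yields $\beta -k\al _i=\al _j+(n-k)\al _i\in R^a$ for all $k\in \{1,\dots ,n\}$, which together with $\beta =\al _j+n\al _i$ and the simple root $\al _j$ itself covers every $\al _j+m\al _i$ with $0\le m\le n$.

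The necessity direction is routine positivity bookkeeping under a single reflection. The one genuinely careful step is sufficiency: one must notice that the endpoint $\al _j-\cm ^a_{ij}\al _i$ of the string is most naturally produced as a reflection of $\al _j$ performed at the neighbouring object $\rfl _i(a)$ rather than at $a$, and then verify the base hypothesis of Corollary~\ref{co:r2conv} so that rank-two string-connectedness can be invoked to close the gaps. I do not expect any obstacle beyond keeping track of which object each reflection acts at.
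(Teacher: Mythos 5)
Your proof is correct and follows essentially the same route as the paper's: positivity via (R1) for the lower bound, the reflection $\s_i^{\rfl_i(a)}(\al_j)=\al_j-c^a_{ij}\al_i$ together with (C2) and (R3) for the top of the string and the upper bound, and Corollary~\ref{co:r2conv} with $\al=\al_i$, $\beta=\al_j-c^a_{ij}\al_i$ to fill in the intermediate roots. The only difference is that you spell out the verification of the base hypothesis and the object-tracking, which the paper leaves implicit.
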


\begin{proof}
  Axiom (R1) tells that $\al _j+k\al _i\notin R^a$ if $k<0$.
  Since $c^{\rfl _i(a)}_{i j}=c^a_{i j}$ by (C2), Axiom (R3) gives that $\al
  _j-c^a_{i j}\al _i=\sigma _i^{\rfl _i(a)}(\al _j)\in R^a$ and that
  $\al _j+k\al _i\notin R^a$ if $k>-c^a_{i j}$. Finally, if $0<k<-c^a_{i j}$
  then $\al _j+k\al _i\in R^a$ by Corollary~\ref{co:r2conv}
  for $\al =\al _i$, $\beta =\al _j-c^a_{i j}\al _i$, and $n=-c^a_{i j}$. 
\end{proof}

Proposition~\ref{pr:sumoftwo} implies another important fact.

\begin{theor}\label{root_is_sum}
  Let $\cC $ be a Cartan scheme. Assume that $\rsC \re (\cC )$ is a finite
  root system of type $\cC $.  Let $a\in A$ and $\al \in R^a_+$.
  Then either $\al $ is simple, or it is the sum of two positive roots.
\end{theor}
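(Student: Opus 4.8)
The plan is to reduce the statement to the rank two case, where it is exactly Proposition~\ref{pr:sumoftwo}. First I would dispose of the trivial case and assume $\al $ is not simple. By axiom (R2) a positive root which is an integer multiple of a single $\al _i$ must equal $\al _i$, so a non-simple $\al \in R^a_+$ has at least two nonzero coordinates. My goal is then to exhibit an index $i$ in the support of $\al $ with $\al -\al _i\in R^a_+$: since $\al -\al _i$ differs from $\al $ only in the $i$-th coordinate, this at once gives $\al =(\al -\al _i)+\al _i$ as a sum of two positive roots.

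To produce such an $i$ I would pass to a rank two sublattice containing $\al $. Fix $i$ in the support of $\al $; then $\al _i$ and $\al $ are linearly independent, and Theorem~\ref{th:genposk} applied to the ordered pair $(\al _i,\al )$ yields $b\in A$, $w\in \Hom (a,b)$ and distinct $p,q\in I$ with $w(\al _i)=\al _p$ and $w(\al )\in \lspan _\ndZ \{\al _p,\al _q\}\cap R^b_+$. Thus $w(\al )$ is a positive root supported on the two indices $\{p,q\}$, hence a positive root of the rank two restriction $\rsC \re (\cC )|_{\{p,q\}}$. Proposition~\ref{pr:sumoftwo} now writes $w(\al )$ as a sum of two positive roots of this restriction, and by Corollary~\ref{co:cij} one of the two summands may even be taken to be a simple root.

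The hard part is positivity. Applying $w^{-1}$ returns a decomposition $\al =w^{-1}(\gamma _1)+w^{-1}(\gamma _2)$ into elements of $R^a$, but reflections do not preserve positivity, so a priori one summand could be negative; since their sum $\al $ lies in $\ndN _0^I$ at most one of them is negative, and this single possibility is precisely the obstacle to overcome. I would resolve it by controlling the morphism: choose $w$ of minimal length among those taking $\al $ into a two-element coordinate support, so that the set of positive roots at $b$ sent to negative roots at $a$ by $w^{-1}$ is as small as possible, and then use the freedom in the rank two decomposition of $w(\al )$ (in particular the option of a simple summand) to avoid that inversion set entirely. Equivalently, one can try to argue directly at $a$: show that some $i$ in the support of $\al $ admits a height-decreasing reflection $\s _i^a$, so that the $\al _i$-string through $\al $ extends downward and Corollary~\ref{co:r2conv} delivers $\al -\al _i\in R^a$, which is then automatically positive. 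Establishing the existence of such a height-decreasing reflection without recourse to a positive definite bilinear form is the crux, and I expect finiteness of $\rsC \re (\cC )$, via the existence of a morphism sending $\al $ to a simple root, to be the tool that forces it.
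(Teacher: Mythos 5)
You have correctly identified the paper's strategy in outline -- reduce to rank two and invoke Proposition~\ref{pr:sumoftwo} -- but the proposal stops exactly where the proof begins. The positivity of the two summands after transporting back to $a$ is not a technicality you can defer: as written, your argument produces a decomposition $\al =w^{-1}(\gamma _1)+w^{-1}(\gamma _2)$ in which one summand may be negative, and your two suggested remedies are both left as hopes. The first (minimize the inversion set of a $w$ carrying $\al $ into a coordinate plane, then dodge it) is not carried out and also overlooks a second problem: Theorem~\ref{th:genposk} may well send $\al $ to a \emph{simple} root of the rank-two restriction, in which case Proposition~\ref{pr:sumoftwo} yields nothing. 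The second remedy -- finding $i$ in the support of $\al $ with $\al -\al _i\in R^a_+$ -- would prove the strictly stronger statement that every non-simple positive root is a simple root plus a positive root; the paper conspicuously does not claim this in general (Theorem~\ref{convex_diff2} establishes it only for roots in special affine hyperplanes), so you should not expect to get it for free from finiteness.

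The paper closes the gap with a length argument on the morphism rather than on the root. Write $\al =w(\al _i)$ with $w\in \Hom (b,a)$ of \emph{minimal length} among all presentations of $\al $ as the image of a simple root. Since $w(\al _i)>0$ one has $\ell (w\s _i)>\ell (w)$, and since $\ell (w)>0$ there is $j\neq i$ with $\ell (w\s _j)<\ell (w)$; factor $w=w_1w_2$ with $w_2$ an alternating word in $\s _i,\s _j$, $\ell (w)=\ell (w_1)+\ell (w_2)$ and $\ell (w_1)$ minimal. Then $\ell (w_1\s _i)>\ell (w_1)$ and $\ell (w_1\s _j)>\ell (w_1)$, so by \cite[Cor.~3]{a-HeckYam08} both $w_1(\al _i)$ and $w_1(\al _j)$ lie in $\ndN _0^I$; consequently $w_1$ maps the entire cone $\ndN _0\al _i+\ndN _0\al _j$ into $\ndN _0^I$. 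The root $\beta =w_2(\al _i)$ lies in that cone and is non-simple precisely by the minimality of $\ell (w)$ (otherwise $w_1$ would be a shorter presentation of $\al $), so Proposition~\ref{pr:sumoftwo} writes $\beta =\beta _1+\beta _2$ with $\beta _1,\beta _2\in \ndN _0\al _i+\ndN _0\al _j$, and applying $w_1$ gives $\al $ as a sum of two positive roots. This is the idea your proposal is missing: positivity is not rescued by shrinking an inversion set, but by arranging that both summands lie in a two-dimensional cone which the remaining morphism $w_1$ is known to map into the positive orthant.
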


\begin{proof}
Assume that $\al $ is not simple.
Let $i\in I$, $b\in A$, and $w\in \Hom (b,a)$ such that $\al =w(\al _i)$.
Then $\ell(w)>0$.
We may assume that for all $j\in I$, $b'\in A$, and $w'\in \Hom (b',a)$
with $w'(\alpha_j)=\al $ we have $\ell(w')\ge\ell(w)$.
Since $w(\alpha_i)\in \ndN _0^I$, we obtain that $\ell(w\s_i)>\ell(w)$
\cite[Cor.~3]{a-HeckYam08}.
Therefore, there is a $j\in I\setminus \{i\}$ with $\ell(w\s_j)<\ell(w)$.
Let $w=w_1w_2$ such that $\ell(w)=\ell(w_1)+\ell(w_2)$,
$\ell(w_1)$ minimal and $w_2=\ldots \s_i\s_j\s_i\s_j^b$.
Assume that $w_2=\s_i\cdots \s_i\s_j^b$ --- the case
$w_2=\s_j\cdots \s_i \s_j^b$
can be treated similarly.
The length of $w_1$ is minimal,
thus $\ell(w_1\s_j)>\ell(w_1)$, and $\ell(w)=\ell(w_1)+\ell(w_2)$ yields
that $\ell(w_1\s_i)>\ell(w_1)$.
Using once more \cite[Cor.~3]{a-HeckYam08} we conclude that
\begin{align} \label{eq:twopos}
  w_1(\alpha_i)\in \ndN _0^I,\quad w_1(\alpha_j)\in \ndN _0^I.
\end{align}
Let $\beta=w_2(\alpha_i)$. Then $\beta \in \NN_0 \alpha_i+\NN_0 \alpha_j$,
since $\ell (w_2\s_i)>\ell (w_2)$. Moreover, $\beta $ is not
simple. Indeed, $\alpha=w(\alpha_i)=w_1(\beta)$, so $\beta$ is not simple, since
$\ell(w_1)<\ell(w)$ and $\ell(w)$ was chosen of minimal length.
By Proposition~\ref{pr:sumoftwo} we conclude that
$\beta$ is the sum of two positive roots
$\beta_1$, $\beta_2\in \ndN _0\al _i+\ndN _0\al _j$.
It remains to check that $w_1(\beta_1)$, $w_1(\beta_2)$
are positive. But this follows from \eqref{eq:twopos}.
\end{proof}

\section{Obstructions for Weyl groupoids of rank three}\label{rk3_obst}

In this section we analyze the structure of finite Weyl groupoids of rank
three.
Let $\cC $ be a Cartan scheme of rank
three, and assume that $\rsC \re (\cC )$
is a finite irreducible root system of type
$\cC $. In this case a hyperplane in $\ndZ ^I$ is the same as a cofree
subgroup of rank two, which will be called a \textit{plane} in the sequel.
For simplicity we will take $I=\{1,2,3\}$, and we write $R^a$ for the set of
positive real roots at $a\in A$.

Recall the definition of the functions $\Vol _k$,
where $k\in \{1,2,3\}$, from the previous section.
As noted, for three elements $\al ,\beta,\gamma\in\ZZ^3$ we have
$\Vol _3(\alpha,\beta,\gamma )=1$ if and only if $\{\alpha,\beta,\gamma\}$
is a basis of $\ndZ ^3$.
Also, we will heavily use the notion of a base, see Definition~\ref{de:base}.

\begin{lemma}\label{rootmultiple}
Let $a\in A$ and $\alpha,\beta \in R^a$.
Assume that
$\al \not=\pm \beta $ and that
$\{\al ,\beta\}$
is not a base for $V^a(\al ,\beta )$ at $a$.
Then there exist $k,l\in \ndN $ and $\delta \in R^a$
such that $\beta -k\al =l\delta $ and $\{\al ,\delta \}$ is a base for
$V^a(\al ,\beta )$ at $a$.
\end{lemma}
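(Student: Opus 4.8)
The plan is to reduce to the case where $\alpha$ is itself a simple root by invoking Theorem~\ref{th:genposk}, and then simply to read off both the relation $\beta-k\alpha=l\delta$ and the base $\{\alpha,\delta\}$ from the image.

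First I would record that $\alpha$ and $\beta$ are linearly independent: if they were proportional, then being real roots they would satisfy $\beta=\pm\alpha$ by axiom (R2), contradicting the hypothesis $\alpha\neq\pm\beta$. Applying Theorem~\ref{th:genposk} to the \emph{ordered} pair $(\alpha,\beta)$ then yields $b\in A$, $w\in\Hom(a,b)$ and distinct $i,j\in I$ with $w(\alpha)\in\lspan_\ndZ\{\alpha_i\}\cap R^b_+$ and $w(\beta)\in\lspan_\ndZ\{\alpha_i,\alpha_j\}\cap R^b_+$. By (R2) the only positive root on the line $\ndZ\alpha_i$ is $\alpha_i$, so the first condition forces $w(\alpha)=\alpha_i$, while the second gives $w(\beta)=p\alpha_i+q\alpha_j$ for some $p,q\in\ndN$ together with $0$. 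I regard this as the crucial step: it is the flag form of Theorem~\ref{th:genposk} (not merely Lemma~\ref{le:hyperplane2}) that pins $\alpha$ onto a single simple root rather than just into a coordinate plane.

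Next I would set $\delta:=w^{-1}(\alpha_j)$. Since $w$ carries $\alpha$ and $\delta$ to the simple roots $\alpha_i$ and $\alpha_j$, Corollary~\ref{simple_rkk} tells us at once that $\{\alpha,\delta\}$ is a base for $V^a(\alpha,\delta)$ at $a$. Because $w$ is an automorphism of $\ndZ^I$ and $w(\alpha),w(\beta),w(\delta)$ all lie in the saturated rank-two plane $\lspan_\ndZ\{\alpha_i,\alpha_j\}$, we get $V^a(\alpha,\beta)=w^{-1}(\lspan_\ndZ\{\alpha_i,\alpha_j\})=V^a(\alpha,\delta)$, so $\{\alpha,\delta\}$ is a base for $V^a(\alpha,\beta)$, as required. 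Applying $w^{-1}$ to the identity $w(\beta)=p\,w(\alpha)+q\,w(\delta)$ produces $\beta=p\alpha+q\delta$, i.e.\ $\beta-k\alpha=l\delta$ with $k:=p$ and $l:=q$.

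It remains to check $k,l\in\ndN$, i.e.\ $p,q\ge1$, and I expect these two non-vanishing checks to be the only delicate points. If $q=0$, then $w(\beta)=p\alpha_i$, whence $p=1$ by (R2) and $\beta=\alpha$, contradicting $\alpha\neq\pm\beta$; thus $l\ge1$. If $p=0$, then $w(\beta)=q\alpha_j$, whence $q=1$ by (R2), so $w$ maps $\{\alpha,\beta\}$ onto $\{\alpha_i,\alpha_j\}$ and Corollary~\ref{simple_rkk} would make $\{\alpha,\beta\}$ a base for $V^a(\alpha,\beta)$, contrary to hypothesis; thus $k\ge1$. Note that the assumption that $\{\alpha,\beta\}$ is \emph{not} a base is used precisely to exclude the degenerate case $p=0$, and that the resulting $\delta=w^{-1}(\alpha_j)$ need not itself be a positive root — which is exactly why the relation is stated with the single correction term $k\alpha$.
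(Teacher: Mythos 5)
Your proof is correct and follows essentially the same route as the paper: the paper simply cites Theorem~\ref{th:genposk} as giving the decomposition (with $k\ge 0$) and then uses the non-base hypothesis to force $k>0$, which is exactly what you spell out with $\delta=w^{-1}(\alpha_j)$ and the two non-vanishing checks. You have merely filled in the details the authors left implicit.
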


\begin{proof}
  The claim without the relation $k>0$ is a special case of
  Theorem~\ref{th:genposk}. The relation $\beta \not=\delta $ follows from the
  assumption that $\{\al ,\beta \}$ is not a base for $V^a(\al ,\beta )$ at
  $a$.
\end{proof}

\begin{lemma}
  Let $a\in A$ and $\alpha,\beta \in R^a$ such that $\al \not=\pm \beta $.
  Then
  $\{\al ,\beta \}$ is a base for $V^a(\al, \beta )$ if and only if
  $\Vol _2(\al ,\beta )=1$ and $\al -\beta \notin R^a$.
  \label{le:base2}
\end{lemma}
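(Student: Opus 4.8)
The plan is to prove the two directions separately, using Theorem~\ref{th:genposk} and Corollary~\ref{simple_rkk} to reduce everything to the simple-root case, where computations are transparent. Throughout, recall that $V^a(\al ,\beta )$ is the rank-two plane spanned by $\al ,\beta $, and that a base $\{\al ,\beta \}$ means $V^a(\al ,\beta )\cap R^a$ lies in $\ndN _0\al +\ndN _0\beta $ together with its negative.

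\textbf{The forward direction.}
First I would assume that $\{\al ,\beta \}$ is a base for $V^a(\al ,\beta )$ at $a$. By Corollary~\ref{simple_rkk} (applied with $k=2$), there exist $b\in A$, $w\in \Hom (a,b)$, and distinct $i,j\in I$ with $w(\al )=\al _i$ and $w(\beta )=\al _j$. Since $w$ is an automorphism of $\ndZ ^I$, it preserves the volume function $\Vol _2$, so $\Vol _2(\al ,\beta )=\Vol _2(\al _i,\al _j)=1$. For the second condition, note that $w(\al -\beta )=\al _i-\al _j$, and $\al _i-\al _j$ is neither in $\ndN _0^I$ nor in $-\ndN _0^I$; by axiom (R1) it is therefore not a root of $R^b$. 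Since $w(R^a)=R^b$ by repeated use of (R3), we conclude $\al -\beta \notin R^a$.

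\textbf{The reverse direction.}
Now I would assume $\Vol _2(\al ,\beta )=1$ and $\al -\beta \notin R^a$, and argue by contradiction: suppose $\{\al ,\beta \}$ is \emph{not} a base. Lemma~\ref{rootmultiple} then supplies $k,l\in \ndN $ and $\delta \in R^a$ with $\beta -k\al =l\delta $ and $\{\al ,\delta \}$ a base for $V^a(\al ,\beta )$ at $a$. The key computation is to compare volumes: since $\{\al ,\delta \}$ is a base, the already-proved forward direction gives $\Vol _2(\al ,\delta )=1$. Writing $\beta =k\al +l\delta $ and using left-$\GL (\ndZ ^I)$- and right-$\GL (\ndZ ^k)$-invariance of $\Vol _2$ on the plane $V^a(\al ,\beta )=V^a(\al ,\delta )$, I would compute $\Vol _2(\al ,\beta )=\Vol _2(\al ,k\al +l\delta )=l\,\Vol _2(\al ,\delta )=l$. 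Combined with the hypothesis $\Vol _2(\al ,\beta )=1$, this forces $l=1$, so $\beta -k\al =\delta \in R^a$ with $k\ge 1$.

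\textbf{Closing the argument.}
It remains to derive a contradiction with $\al -\beta \notin R^a$. With $l=1$ and $\{\al ,\delta \}$ a base, Corollary~\ref{co:r2conv} (applied with the roots $\al $, $\beta $ and $n=k$, since $\beta -k\al =\delta \in R^a$ and $\{\al ,\beta -k\al \}$ is a base) yields $\beta -m\al \in R^a$ for all $m\in \{1,\dots ,k\}$. I expect the main obstacle to be the final step: one must convert the failure of $\{\al ,\beta \}$ being a base into the specific forbidden root $\al -\beta $. The cleanest route is to observe that since $\{\al ,\delta \}$ is a base, Corollary~\ref{simple_rkk} transports the picture to simple roots $w(\al )=\al _i$, $w(\delta )=\al _j$; then $w(\beta )=k\al _i+\al _j$ and $w(\al -\beta )=(1-k)\al _i-\al _j$. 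If $k\ge 1$ this vector has a nonpositive $\al _i$-coefficient and a negative $\al _j$-coefficient, hence lies in $-\ndN _0^I$ precisely when $k\ge 1$, so $w(\al -\beta )=-\bigl((k-1)\al _i+\al _j\bigr)$, and $(k-1)\al _i+\al _j\in R^b_+$ by Corollary~\ref{co:cij} together with $k-1\le -c^b_{ij}$ (which holds because $k\al _i+\al _j=w(\beta )\in R^b$ forces $k\le -c^b_{ij}$). Thus $\al -\beta \in R^a$, contradicting the hypothesis and completing the proof.
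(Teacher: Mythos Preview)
Your proof is correct and follows essentially the same approach as the paper. One remark: your application of Corollary~\ref{co:r2conv} already gives $\beta-\al\in R^a$ (the case $m=1$), and since $R^a=-R^a$ by (R1) this immediately yields $\al-\beta\in R^a$, so the detour via Corollary~\ref{co:cij} in your final paragraph is unnecessary.
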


\begin{proof}
  Assume first that $\{\al ,\beta \}$ is a base for $V^a(\al ,\beta )$ at $a$.
  By Corollary~\ref{simple_rkk} we may assume that $\al $ and $\beta $ are
  simple roots. Therefore
  $\Vol _2(\al ,\beta )=1$ and $\al -\beta \notin R^a$.

  Conversely, assume that
  $\Vol _2(\al ,\beta )=1$, $\al -\beta \notin R^a$, and
  that $\{\al ,\beta \}$ is not a base for $V^a(\al ,\beta )$ at $a$.
  Let $k,l,\delta $ as in Lemma~\ref{rootmultiple}. Then
  \[ 1=\Vol _2(\al ,\beta )=\Vol _2(\al ,\beta -k\al )=l\Vol _2(\al ,\delta ).
  \]
  Hence $l=1$, and $\{\al ,\delta \}=\{\al ,\beta -k\al \}$ is
  a base for $V^a(\al ,\beta )$ at $a$. Then $\beta -\al \in R^a$
  by Corollary~\ref{co:r2conv} and since $k>0$. This gives
  the desired contradiction to the assumption $\al -\beta \notin R^a$.
\end{proof}

Recall that a semigroup ordering $<$ on a commutative semigroup $(S,+)$
is a total ordering
such that for all $s,t,u\in S$ with $s<t$ the relations
$s+u<t+u$ hold.
For example, the lexicographic ordering on $\ndZ ^I$
induced by any total ordering on $I$ is a semigroup ordering.

\begin{lemma}\label{posrootssemigroup}
  Let $a\in A$, and let $V\subset \ndZ ^I$ be a plane
  containing at least two positive roots of $R^a$.
  Let $<$ be a semigroup ordering on $\ndZ ^I$ such that $0<\gamma $
  for all $\gamma \in R^a_+$, and let
  $\al ,\beta $ denote the two smallest elements in $V\cap R^a_+$ with respect
  to $<$. Then $\{\al ,\beta \}$
  is a base for $V$ at $a$.
\end{lemma}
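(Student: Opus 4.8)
The plan is to transport the whole configuration to a rank-two coordinate plane, on which the sum-of-two-roots structure is available, and read off the base there.

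First the easy reductions. Every real root lies in the orbit of a simple root, so by Corollary~\ref{simple_rkk} (with $k=1$) every root is primitive; two distinct positive roots can therefore not be proportional, hence $\alpha,\beta$ are linearly independent and $V=V^a(\alpha,\beta)$. By (R1) it is enough to show that every root in $V\cap R^a_+$ lies in $\ndN _0\alpha+\ndN _0\beta$, since the negative roots in $V$ are then the negatives of these. Writing $P=V\cap R^a_+$, I would first record that $\alpha$ and $\beta$ are \emph{indecomposable in $P$}: if $\gamma=\gamma_1+\gamma_2$ with $\gamma_1,\gamma_2\in P$, then $0<\gamma_i<\gamma$ for the semigroup ordering, so $\alpha$ admits no such decomposition, and a decomposition of $\beta$ would force $\gamma_1=\gamma_2=\alpha$, i.e. $\beta=2\alpha$, against primitivity. (The same minimality argument also gives $\alpha-\beta\notin R^a$: otherwise $\beta-\alpha$ would be a positive root below $\beta$, hence $=\alpha$, again giving $\beta=2\alpha$.)

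Next I would pass to a coordinate plane. Lemma~\ref{le:hyperplane2} provides $w\in\Hom(a,b)$ with $w(V)=\ndZ ^J$ for some two-element $J\subseteq I$. What I want in addition is that $w$ carries $P$ onto $\Phi^b_+:=R^b_+\cap\ndZ ^J$. To arrange this I would run the reduction of Lemma~\ref{le:hyperplane2} a second time inside $\ndZ ^J$: as long as some $\gamma\in P$ has $w(\gamma)\in-\Phi^b_+$, the $b$-positive roots missing from $w(P)$ form an end-arc of the rank-two root system $\Phi^b_+$, hence contain a simple root $\alpha_s$ with $s\in J$; replacing $w$ by $\sigma_s w$ keeps $w(V)=\ndZ ^J$ and strictly decreases the number of roots of $P$ sent to the negative side. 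Since $P$ is finite this terminates with $w(P)\subseteq\ndN _0^I\cap\ndZ ^J\cap R^b=\Phi^b_+$, and equality holds by counting, both being ``halves'' of the same finite set $w(V\cap R^a)=\ndZ ^J\cap R^b$.

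Finally I would conclude on the coordinate side. The decisive feature of a coordinate plane is that Theorem~\ref{root_is_sum} stays inside it: if a positive root in $\ndZ ^J$ is a sum of two positive roots, their coordinates outside $J$ are nonnegative and sum to zero, so both summands already lie in $\Phi^b_+$. Hence every non-simple element of $\Phi^b_+$ is a sum of two elements of $\Phi^b_+$, and the only indecomposable elements of $\Phi^b_+$ are the two simple roots $\alpha_s,\alpha_t$, which form a base for $\ndZ ^J$ at $b$. Transporting the semigroup ordering through the group isomorphism $w\colon V\to\ndZ ^J$, the two smallest elements of $\Phi^b_+=w(P)$ are exactly its two indecomposables, namely $\alpha_s$ and $\alpha_t$; since $w(\alpha),w(\beta)$ are by construction the two smallest elements of $w(P)$, we obtain $\{w(\alpha),w(\beta)\}=\{\alpha_s,\alpha_t\}$, and Corollary~\ref{simple_rkk} then says that $\{\alpha,\beta\}$ is a base for $V$ at $a$.

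The step I expect to be the real obstacle is the positivity-preserving passage to the coordinate plane: Lemma~\ref{le:hyperplane2} controls only the plane $V$, not which of its two halves becomes the positive system, and the end-arc/counting argument is what forces $w(P)=\Phi^b_+$. The same difficulty surfaces if one argues instead through Lemma~\ref{le:base2}, where it remains to prove $\Vol _2(\alpha,\beta)=1$: Lemma~\ref{rootmultiple} yields $k,l\ge 1$ and $\delta\in R^a$ with $\beta=k\alpha+l\delta$ and $\{\alpha,\delta\}$ a base, so $\Vol _2(\alpha,\beta)=l$, and ruling out $l\ge 2$ is immediate when $\delta\in R^a_+$ (then $\delta$ is a positive root of $V$ strictly between $\alpha$ and $\beta$) but delicate when $\delta\in-R^a_+$ — precisely the sign bookkeeping resolved by the reduction above.
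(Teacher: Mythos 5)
Your proof is correct, but it takes a genuinely different route from the paper's. The paper never leaves the plane $V$: it applies Lemma~\ref{rootmultiple} to get $\delta \in V\cap R^a$ with $\{\al ,\delta \}$ a base for $V$ at $a$, and then handles the sign of $\delta $ directly with the semigroup ordering --- if $\delta <0$ it considers $\delta +n\al $ for increasing $n$, using the minimality of $\al $ to show each such vector stays negative as long as it is a root, so that for the largest such $m$ the vector $-(\delta +m\al )$ is a positive root forming a base with $\al $ by Lemma~\ref{le:base2}; once $\delta >0$ is arranged, writing $\beta =k\al +l\delta $ and invoking the minimality of $\beta $ forces $\delta =\beta $. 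You instead transport everything to a coordinate plane $\ndZ ^J$ and characterize the two simple roots there as the indecomposable elements of $R^b_+\cap \ndZ ^J$. The genuine extra work your route demands --- which you correctly identify as the crux and do carry out --- is the positivity-preserving strengthening of Lemma~\ref{le:hyperplane2}: that lemma only straightens the plane, and your end-arc induction (if some root of $P$ lands on the negative side, one of $\al _s,\al _t$ must be negative for the transported ordering, and reflecting in it strictly decreases the count while preserving $\ndZ ^J$) is essentially a rerun of the proof of Lemma~\ref{le:hyperplane2} with reflections restricted to $J$. Your approach buys a clean conceptual endpoint (smallest $=$ indecomposable $=$ simple, via Theorem~\ref{root_is_sum} restricted to a coordinate plane); the paper's buys brevity, absorbing all the sign bookkeeping into the single case $\delta <0$. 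Both arguments are sound and rest on the same underlying tools (Theorem~\ref{th:genposk} and the rank-two sum decomposition).
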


\begin{proof}
  Let $\al $ be the smallest element of $V\cap R^a_+$
  with respect to $<$, and let $\beta $ be the smallest element of
  $V\cap (R^a_+\setminus \{\al \})$. Then $V=V^a(\al, \beta )$ by (R2).
  By Lemma~\ref{rootmultiple} there exists $\delta \in V\cap
  R^a$ such that $\{\al ,\delta \}$ is a base for $V$ at $a$.
  First suppose that $\delta <0$. Let $m\in \ndN _0$ be the smallest
  integer with $\delta +(m+1)\al \notin R^a$. Then $\delta +n\al <0$
  for all $n\in \ndN _0$ with $n\le m$. Indeed, this holds for $n=0$ by
  assumption. By induction on $n$ we obtain from $\delta +n\al <0$
  and the choice of $\al $ that $\delta +n\al <-\al $, since $\delta $ and
  $\al $ are not collinear. Hence $\delta +(n+1)\al <0$.
  We conclude that $-(\delta +m\al )>0$. Moreover,
  $\{\al ,-(\delta +m\al )\}$ is a base for $V$ at $a$ by
  Lemma~\ref{le:base2} and the choice of $m$. Therefore,
  by replacing $\{\al ,\delta \}$ by $\{\al ,-(\delta +m\al )\}$,
  we may assume that $\delta >0$.
  Since $\beta >0$, we conclude that $\beta =k\al +l\delta $ for some $k,l\in
  \ndN _0$. Since $\beta $ is not a multiple of $\al $, this implies that
  $\beta =\delta $ or $\beta >\delta $. Then the choice of $\beta $ and the
  positivity of $\delta $ yield that $\delta =\beta $,
  that is, $\{\al ,\beta \}$ is a base for $V$ at $a$.
\end{proof}

\begin{lemma}
  \label{le:badroots}
  Let $k\in \ndN _{\ge 2}$, $a\in A$, $\al \in R^a_+$, and
  $\beta \in \ndZ ^I$ such that $\al $ and $\beta $ are not collinear and
  $\al +k\beta \in R^a$.
  Assume that $\Vol _2(\al ,\beta )=1$ and that $(-\ndN \al +\ndZ \beta )
  \cap \ndN _0^I=\emptyset $. Then $\beta \in R^a$
  and $\al +l\beta \in R^a$ for all $l\in \{1,2,\dots,k\}$.
\end{lemma}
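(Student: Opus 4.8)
The plan is to reduce everything to the rank two root system living on the plane $V:=V^a(\alpha,\beta)$ and then fill the chain using the rank two convexity already available. Since $\Vol_2(\alpha,\beta)=1$ we have $V=\ndZ\alpha+\ndZ\beta$, and I work in the coordinates in which $\alpha=(1,0)$ and $\beta=(0,1)$. The hypothesis $(-\ndN\alpha+\ndZ\beta)\cap\ndN_0^I=\emptyset$ says precisely that every positive root lying in $V$ has nonnegative first coordinate; in particular $-(\alpha+k\beta)$ cannot be positive, so $\alpha+k\beta\in R^a_+$. By Lemma~\ref{posrootssemigroup} the plane $V$ has a base $\{\gamma_1,\gamma_2\}$ consisting of positive roots, and by Corollary~\ref{simple_rkk} some $w\in\Hom(a,b)$ carries it to a pair of simple roots; hence, up to this isomorphism, $V\cap R^a$ is a finite rank two root system whose positive part is $V\cap R^a_+$, which is what lets me apply Proposition~\ref{pr:sumoftwo} inside $V$. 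I also record two facts: every root is primitive (Theorem~\ref{th:genposk} for $k=1$ together with (R2)), and $\beta$ is primitive because $\Vol_2(\alpha,\beta)=1$.

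First I would prove that $\beta\in R^a$, and this is where $k\ge 2$ enters. Since $\Vol_2(\alpha,\alpha+k\beta)=k\ge 2$, the pair $\{\alpha,\alpha+k\beta\}$ is not a base, so $\alpha$ and $\alpha+k\beta$ cannot both be simple roots of $V$. Choosing whichever of the two is not simple and applying Proposition~\ref{pr:sumoftwo} inside $V$, I write it as a sum of two positive roots of $V$. Their first coordinates are nonnegative and add up to $1$, so one summand has first coordinate $0$; being a primitive root contained in $\ndZ\beta$, that summand equals $\pm\beta$, and therefore $\beta\in R^a$.

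It remains to fill the chain, for which I would use Corollary~\ref{co:r2conv}. Let $s$ and $t$ be the smallest and largest integers $q$ with $\alpha+q\beta\in R^a_+$; these exist by finiteness, and $s\le 0\le k\le t$. Exactly one of $\beta,-\beta$ is positive. Suppose $\beta\in R^a_+$. I claim $\{\beta,\alpha+s\beta\}$ is a base for $V$: the volume is $\Vol_2(\beta,\alpha)=1$, and $\beta-(\alpha+s\beta)=-\alpha+(1-s)\beta$ is not a root, since its first coordinate $-1$ forbids it from being positive by the hypothesis, while minimality of $s$ forbids its negative $\alpha+(s-1)\beta$ from being positive; Lemma~\ref{le:base2} then gives the base. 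Now Corollary~\ref{co:r2conv}, applied with the root $\alpha+t\beta$, the step $\beta$, and $n=t-s$ (so that $(\alpha+t\beta)-n\beta=\alpha+s\beta$), yields $\alpha+q\beta\in R^a$ for all $q$ with $s\le q\le t$, in particular for $q=1,\dots,k$. The case $-\beta\in R^a_+$ is symmetric: one takes the base $\{-\beta,\alpha+t\beta\}$, using maximality of $t$ in place of minimality of $s$.

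The main obstacle is conceptual rather than computational: one must guarantee that the decomposition in the second step and the convexity in the third take place inside $V$, which is exactly what the reduction to the rank two root system on the plane provides. The second delicate point is the sign ambiguity of $\beta$: a priori only $\beta\in R^a$ is known, not its sign, which is why I introduce the extremal exponents $s,t$ and split into the two symmetric subcases. In each subcase the hypothesis on $-\ndN\alpha+\ndZ\beta$ is used precisely to force the outer vector $-\alpha+(1-s)\beta$ (resp.\ $-\alpha-(t+1)\beta$) to fail to be a root, so that Lemma~\ref{le:base2} produces the base needed to run Corollary~\ref{co:r2conv}.
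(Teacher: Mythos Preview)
Your proof is correct and follows the same overall strategy as the paper: work inside the plane $V=V^a(\alpha,\beta)$, use a base of positive roots there, exploit $\Vol_2(\alpha,\alpha+k\beta)=k\ge 2$ to force $\beta\in R^a$, and then finish with Corollary~\ref{co:r2conv}. The implementation differs in two places. For $\beta\in R^a$, the paper argues by contradiction: if $\beta\notin R^a$, then both base vectors $\gamma_1,\gamma_2$ must have $\alpha$-coefficient $\ge 1$, which forces $\{\alpha,\alpha+k\beta\}=\{\gamma_1,\gamma_2\}$ and contradicts $\Vol_2(\gamma_1,\gamma_2)=1$; you instead decompose a non-simple member of $\{\alpha,\alpha+k\beta\}$ via Proposition~\ref{pr:sumoftwo} and read off a summand in $\ndZ\beta$. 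For the chain, the paper invokes Lemma~\ref{rootmultiple} once (with the roles of $\alpha$ and $\beta$ swapped) to obtain a base $\{\beta,\alpha-m\beta\}$ for some $m\in\ndN_0$ and then applies Corollary~\ref{co:r2conv} directly, bypassing any discussion of the sign of $\beta$; your route through the extremal exponents $s,t$ and Lemma~\ref{le:base2}, with the case split on whether $\beta$ or $-\beta$ is positive, is longer but equally valid. Either way, the content is the same.
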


\begin{proof}
  We prove the claim indirectly. Assume that $\beta \notin R^a$.
  By Lemma~\ref{posrootssemigroup} there exists a base $\{\gamma _1,\gamma
  _2\}$ for $V^a(\al ,\beta )$ at $a$ such that $\gamma _1,\gamma _2\in R^a_+$.
  The assumptions of the lemma imply that there exist
  $m_1,l_1\in \ndN _0$ and $m_2,l_2\in \ndZ $
  such that $\gamma _1=m_1\al +m_2\beta $, $\gamma _2=l_1\al +l_2\beta $.
  Since $\beta \notin R^a$, we obtain that $m_1\ge 1$ and $m_2\ge 1$.
  Therefore relations $\al ,\al +k\beta \in R^a_+$ imply that
  $\{\al ,\al +k\beta \}=\{\gamma _1,\gamma _2\}$. The latter
  is a contradiction to
  $\Vol _2(\gamma _1,\gamma _2)=1$ and $\Vol _2(\al ,\al +k\beta )=k>1$.
  Thus $\beta \in R^a$. By Lemma~\ref{rootmultiple}
  we obtain that $\{\beta ,\al -m\beta \}$ is a
  base for $V^a(\al ,\beta )$ at $a$ for some $m\in \ndN _0$. Then
  Corollary~\ref{co:r2conv} and the assumption that $\al +k\beta \in R^a$
  imply the last claim of the lemma.
\end{proof}

We say that a subset $S$ of $\ndZ ^3$ is \textit{convex},
if any rational convex
linear combination of elements of $S$ is either in $S$ or not in $\ndZ ^3$.
We start with a simple example.

\begin{lemma}
  \label{le:square}
  Let $a\in A$. Assume that $c^a_{12}=0$.
  
  (1) Let $k_1,k_2\in \ndZ $. Then
  $\al _3+k_1\al _1+k_2\al _2\in R^a$ if and only if $0\le k_1\le -c^a_{13}$
  and $0\le k_2\le -c^a_{23}$.
  
  (2) Let $\gamma \in (\al _3+\ndZ \al _1+\ndZ \al _2)\cap R^a$.
  Then $\gamma -\al _1\in R^a$ or $\gamma +\al _1\in R^a$. Similarly
  $\gamma -\al _2\in R^a$ or $\gamma +\al _2\in R^a$.
\end{lemma}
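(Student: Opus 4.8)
The plan is to establish part~(1) directly and then read off part~(2). Write $p=-c^a_{13}$ and $q=-c^a_{23}$, and note first that $c^a_{12}=0$ forces $c^a_{21}=0$ by~(M2), so $\sigma_1^a(\alpha_2)=\alpha_2$ and $\sigma_2^a(\alpha_1)=\alpha_1$. For the ``only if'' direction of~(1), suppose $\alpha_3+k_1\alpha_1+k_2\alpha_2\in R^a$; its $\alpha_3$-coefficient is $1$, so the root is positive and $k_1,k_2\ge 0$ by~(R1). Applying $\sigma_1^a$ and using $c^a_{12}=0$ leaves the $\alpha_2$-coefficient unchanged and turns the $\alpha_1$-coefficient into $-c^a_{13}-k_1$; the image lies in $R^{\rho_1(a)}$ and is again positive (its $\alpha_3$-coefficient is $1$), whence $k_1\le p$. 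Symmetrically $k_2\le q$.

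For the ``if'' direction I would fill the rectangle $\{\alpha_3+k_1\alpha_1+k_2\alpha_2\mid 0\le k_1\le p,\ 0\le k_2\le q\}$ by repeated use of the rank-two convexity Corollary~\ref{co:r2conv}. Corollary~\ref{co:cij} supplies the bottom edge $\alpha_3+k_1\alpha_1$ $(0\le k_1\le p)$ and the left edge $\alpha_3+k_2\alpha_2$ $(0\le k_2\le q)$. Lemma~\ref{le:base2} shows that $\{\alpha_1,\alpha_3+q\alpha_2\}$ and each $\{\alpha_2,\alpha_3+k_1\alpha_1\}$ is a base: in every case $\Vol _2=1$ (a suitable $2\times 2$ minor equals $1$) and the difference of the two generators has an $\alpha_3$-coefficient of the wrong sign, hence is not a root by~(R1). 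Granting for the moment that the corner $\alpha_3+p\alpha_1+q\alpha_2$ lies in $R^a$, Corollary~\ref{co:r2conv} with $\alpha=\alpha_1$ fills the top edge $\alpha_3+k_1\alpha_1+q\alpha_2$ $(0\le k_1\le p)$, and the same corollary with $\alpha=\alpha_2$ applied along each top-edge root fills the remaining columns, giving the whole rectangle. Part~(2) is then immediate: part~(1) forces $0\le k_1\le p$ and $0\le k_2\le q$ for any $\gamma=\alpha_3+k_1\alpha_1+k_2\alpha_2\in R^a$, while irreducibility together with $c^a_{12}=0$ gives $p,q\ge 1$ (otherwise a node of $C^a$ would be isolated and $C^a$ decomposable, contradicting \cite[Prop.~4.6]{a-CH09a}); hence for each of $\alpha_1,\alpha_2$ at least one of $\gamma\pm\alpha_i$ stays inside the rectangle and is a root by~(1).

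The one genuine obstacle is the corner $\alpha_3+p\alpha_1+q\alpha_2\in R^a$. The naive attempt to produce it as $\sigma_2^{\rho_1(a)}\sigma_1^a(\alpha_3)$ or $\sigma_1^{\rho_2(a)}\sigma_2^a(\alpha_3)$ lands the root at $\rho_1\rho_2(a)$ and, worse, gives coefficients controlled by Cartan entries at intermediate objects rather than by $p$ and $q$. The remedy is the $m=2$ commutation relation: by Corollary~\ref{co:cij} the only roots in $\ndN _0\alpha_1+\ndN _0\alpha_2$ are $\alpha_1,\alpha_2$, so $m^a_{1,2}=2$, axiom~(R4) gives $(\rho_1\rho_2)^2(a)=a$, and the associated Coxeter relation of the Weyl groupoid reads $\sigma_1^{\rho_2(a)}\sigma_2^a=\sigma_2^{\rho_1(a)}\sigma_1^a$ (see \cite{a-HeckYam08,a-CH09a}). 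Evaluating both sides on $\alpha_3$ and comparing coefficients yields $c^{\rho_2(a)}_{13}=c^a_{13}$ and $c^{\rho_1(a)}_{23}=c^a_{23}$. With this I would compute the corner at the correct object: the morphism $\sigma_2^{\rho_2(a)}\sigma_1^{\rho_1\rho_2(a)}\in\Hom(\rho_1\rho_2(a),a)$ sends the simple root $\alpha_3$ to $\alpha_3-c^{\rho_1\rho_2(a)}_{13}\alpha_1-c^{\rho_2(a)}_{23}\alpha_2$; rewriting $c^{\rho_1\rho_2(a)}_{13}=c^{\rho_2(a)}_{13}$ and $c^{\rho_2(a)}_{23}=c^a_{23}$ by~(C2), and then using the two identities just obtained, both coefficients collapse to $p$ and $q$, so the image is exactly $\alpha_3+p\alpha_1+q\alpha_2$ and lies in $R^a$. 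I expect the only delicate point to be the bookkeeping of objects and the invocation of the $m=2$ Coxeter relation; everything else is a routine application of the rank-two results of Section~\ref{gen_res}.
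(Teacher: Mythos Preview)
Your proof is correct and follows essentially the same strategy as the paper: obtain the corners of the rectangle by reflections (using that $c^a_{12}=0$ forces $c^{\rho_1(a)}_{23}=c^a_{23}$ and $c^{\rho_2(a)}_{13}=c^a_{13}$), then fill the interior by rank-two convexity, and deduce (2) from irreducibility. The only cosmetic differences are that the paper cites \cite[Lemma~4.5]{a-CH09a} for the Cartan-entry stability where you rederive it from the $m=2$ Coxeter relation, and it invokes Lemma~\ref{le:badroots} to fill the rectangle where you use Lemma~\ref{le:base2} together with Corollary~\ref{co:r2conv}.
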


\begin{proof}
  (1) The assumption $c^a_{12}=0$ implies that $c^{\rfl _1(a)}_{23}=c^a_{23}$,
  see \cite[Lemma\,4.5]{a-CH09a}.
  Applying $\s _1^{\rfl _1(a)}$, $\s _2^{\rfl _2(a)}$, and
  $\s _1\s _2^{\rfl _2\rfl _1(a)}$ to $\al _3$ we conclude that
  $\al _3-c^a_{13}\al _1$, $\al _3-c^a_{23}\al _2$, $\al _3-c^a_{13}\al
  _1-c^a_{23}\al _2\in R^a_+$.
  Thus Lemma~\ref{le:badroots} implies that
  $\al _3+m_1\al _1+m_2\al _2\in R^a$
  for all $m_1,m_2\in \ndZ $ with $0\le m_1\le -c^a_{13}$
  and $0\le m_2\le -c^a_{23}$.
  Further, (R1) gives that
  $\al _3+k_1\al _1+k_2\al _2\notin R^a$ if $k_1<0$ or $k_2<0$.
  Applying again the simple reflections $\s _1$ and $\s _2$, a similar
  argument proves the remaining part of the claim.
  Observe that the proof does not use the fact
  that $\rsC \re (\cC )$ is irreducible.

  (2) Since $c^a_{12}=0$, the irreducibility of $\rsC \re (\cC )$ yields that
  $c^a_{13},c^a_{23}<0$ by \cite[Def.\,4.5, Prop.\,4.6]{a-CH09a}.
  Hence the claim follows from (1).
\end{proof}

\begin{propo} \label{pr:suminR}
  Let $a\in A$ and let $\gamma _1,\gamma _2,\gamma _3\in R^a$.
  Assume that $\Vol _3(\gamma _1,\gamma _2,\gamma _3)=1$
  and that $\gamma _1-\gamma _2,\gamma _1-\gamma _3\notin R^a$.
  Then $\gamma _1+\gamma _2\in R^a$ or $\gamma _1+\gamma _3\in R^a$.
\end{propo}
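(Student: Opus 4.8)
The plan is to assume the conclusion fails and then, using the reflection machinery built up in the previous results, to reduce to a configuration in which Lemma~\ref{le:square} applies and produces a contradiction.

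First I would extract the two-dimensional consequences of the volume hypothesis. Since $\Vol_3(\gamma_1,\gamma_2,\gamma_3)=1$, the three roots form a $\ZZ$-basis of $\ZZ^3$; in particular they are pairwise linearly independent and $\Vol_2(\gamma_1,\gamma_2)=\Vol_2(\gamma_1,\gamma_3)=1$. Combining this with the hypotheses $\gamma_1-\gamma_2,\gamma_1-\gamma_3\notin R^a$, Lemma~\ref{le:base2} shows that $\{\gamma_1,\gamma_2\}$ is a base for $V^a(\gamma_1,\gamma_2)$ at $a$ and that $\{\gamma_1,\gamma_3\}$ is a base for $V^a(\gamma_1,\gamma_3)$ at $a$.

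Now suppose, for contradiction, that $\gamma_1+\gamma_2\notin R^a$ and $\gamma_1+\gamma_3\notin R^a$. Applying Corollary~\ref{simple_rkk} to the base $\{\gamma_1,\gamma_2\}$ yields $b\in A$ and $w\in\Hom(a,b)$ carrying $\gamma_1$ and $\gamma_2$ to two distinct simple roots; since the whole setting is symmetric under permutations of $I$, I may relabel so that $w(\gamma_1)=\alpha_1$ and $w(\gamma_2)=\alpha_2$. As $w$ restricts to a bijection $R^a\to R^b$ and acts on $\ZZ^I$ through $\GL(\ZZ^I)$, it preserves $\Vol_3$ and all the root/non-root relations. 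Hence $\alpha_1+\alpha_2=w(\gamma_1+\gamma_2)\notin R^b$, so $c^b_{12}=0$ by Corollary~\ref{co:cij}. Setting $\delta:=w(\gamma_3)\in R^b$, the set $\{\alpha_1,\alpha_2,\delta\}$ is again a $\ZZ$-basis of $\ZZ^3$, and $\alpha_1-\delta=w(\gamma_1-\gamma_3)\notin R^b$ together with $\alpha_1+\delta=w(\gamma_1+\gamma_3)\notin R^b$.

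To bring Lemma~\ref{le:square} into play I normalise $\delta$: because $\{\alpha_1,\alpha_2,\delta\}$ is a basis, the $\alpha_3$-coordinate of $\delta$ is $\pm1$, and replacing $\delta$ by $-\delta$ if necessary (this only interchanges the two relations $\alpha_1\pm\delta\notin R^b$, both of which hold) I may assume this coordinate equals $1$, so that $\delta\in R^b_+$. Since $c^b_{12}=0$ and $\rsC\re(\cC)$ is irreducible, this is exactly the hypothesis of Lemma~\ref{le:square}: from $\delta\in(\alpha_3+\ZZ\alpha_1+\ZZ\alpha_2)\cap R^b$, part~(2) of that lemma forces $\delta-\alpha_1\in R^b$ or $\delta+\alpha_1\in R^b$, contradicting $\alpha_1\pm\delta\notin R^b$. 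Therefore one of $\gamma_1+\gamma_2,\gamma_1+\gamma_3$ lies in $R^a$. The only genuine obstacle is the bookkeeping that makes Lemma~\ref{le:square} applicable: one must move $\gamma_1,\gamma_2$ simultaneously onto two simple roots with vanishing Cartan entry while retaining enough control over the image of $\gamma_3$ to place it in the coset $\alpha_3+\ZZ\alpha_1+\ZZ\alpha_2$, after which Lemma~\ref{le:square}(2) supplies precisely the irreducibility-driven shift by $\pm\alpha_1$ that closes the argument.
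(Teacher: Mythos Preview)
Your argument is correct and follows essentially the same route as the paper's proof: reduce via a morphism $w$ so that $\gamma_1,\gamma_2$ become two simple roots with $c^b_{12}=0$, and then apply Lemma~\ref{le:square}(2) to the image of $\gamma_3$. The only cosmetic difference is that the paper invokes Theorem~\ref{th:genposk} (together with Lemma~\ref{le:base2}) to place $w(\gamma_3)$ directly in $\alpha_{i_3}+\ndN_0\alpha_{i_1}+\ndN_0\alpha_{i_2}$ and argues directly from $\gamma_3-\gamma_1\notin R^a$, whereas you use Corollary~\ref{simple_rkk} on the pair $\{\gamma_1,\gamma_2\}$ and a sign flip of $\delta$ to land in the coset $\alpha_3+\ndZ\alpha_1+\ndZ\alpha_2$, which is why you need the full contradiction hypothesis $\gamma_1+\gamma_2,\gamma_1+\gamma_3\notin R^a$ to make the flip harmless.
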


\begin{proof}
  Since $\gamma _1-\gamma _2\notin R^a$ and $\Vol _3(\gamma _1,\gamma
  _2,\gamma _3)=1$, Theorem~\ref{th:genposk} and Lemma~\ref{le:base2} imply
  that there exists $b\in A$, $w\in \Hom (a,b)$ and $i_1,i_2,i_3\in I$
  such that $w(\gamma _1)=\al _{i_1}$, $w(\gamma _2)=\al _{i_2}$,
  and $w(\gamma _3)=\al _{i_3}+k_1\al _{i_1}+k_2\al _{i_2}$ for some
  $k_1,k_2\in \ndN _0$.
  Assume that $\gamma _1+\gamma _2\notin R^a$. Then $c^b_{i_1i_2}=0$.
  Since $\gamma _3-\gamma _1\notin R^a$, Lemma~\ref{le:square}(2)
  with $\gamma =w(\gamma _3)$ gives that $\gamma _3+\gamma _1\in R^a$.
  This proves the claim.
\end{proof}

\begin{lemma} \label{le:root_diffs1}
  Assume that $R^a\cap (\ndN _0\al _1+\ndN _0\al _2)$ contains at most $4$
  positive roots.

  (1) The set $S_3:=(\al _3+\ndZ \al _1+\ndZ \al _2)\cap R^a$ is convex.
  
  (2) Let $\gamma \in S_3$. Then $\gamma =\al _3$ or $\gamma -\al _1\in R^a$ or
  $\gamma -\al _2\in R^a$.
\end{lemma}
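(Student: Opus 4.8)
The plan is to first pin down the rank-two subsystem in the plane $\ndN _0\al _1+\ndN _0\al _2$, and then to reduce both assertions to one-dimensional convexity along root strings. Since this plane contains at most four positive roots, Proposition~\ref{pr:R=Fseq} shows that the associated $\cF $-sequence has length at most four, so the restriction of $\rsC \re (\cC )$ to $\{1,2\}$ is of type $A_1\times A_1$, $A_2$, or $B_2$; in particular the only non-simple positive roots in the plane are $\al _1+\al _2$ and possibly one of $2\al _1+\al _2$, $\al _1+2\al _2$. If $c^a_{12}=0$ we are in the $A_1\times A_1$ case, and Lemma~\ref{le:square}(1) identifies $S_3$ with the rectangle of lattice points $\al _3+k_1\al _1+k_2\al _2$, $0\le k_1\le -c^a_{13}$, $0\le k_2\le -c^a_{23}$; both claims are then immediate, since a rectangle of lattice points is convex and any $\gamma \neq \al _3$ in it has $k_1>0$ or $k_2>0$. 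Hence I may assume $c^a_{12}\neq 0$.

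The main device is the following root-string reduction, which I would prove from Lemma~\ref{le:base2} and Corollary~\ref{co:r2conv}: if $\gamma -n\al _i\in R^a$ for some $n\ge 1$, then $\gamma -\al _i\in R^a$. Indeed, taking $n^\ast $ maximal with $\gamma -n^\ast \al _i\in R^a$, the pair $\{\al _i,\gamma -n^\ast \al _i\}$ is a base for $V^a(\al _i,\gamma )$ (its $\Vol _2$ equals $\Vol _2(\al _i,\gamma )=1$, and $\al _i-(\gamma -n^\ast \al _i)\notin R^a$ by maximality), so Corollary~\ref{co:r2conv} fills the whole string down to $k=1$. Thus for (2) it suffices to produce, for $\gamma \in S_3$ with $\gamma \neq \al _3$, some root lying strictly below $\gamma $ in a pure $\al _1$- or pure $\al _2$-direction.

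To produce such a root I would use Theorem~\ref{root_is_sum} and Proposition~\ref{pr:suminR}, arguing by induction on the height $k_1+k_2$ of $\gamma $. Writing $\gamma =\beta _1+\beta _2$ with $\beta _1\in S_3$ and $\beta _2$ a positive plane root, we are done at once if $\beta _2\in \{\al _1,\al _2\}$. Otherwise $\beta _2$ is non-simple; the key point is that $\Vol _3(\beta _1,\al _1,\al _2)=1$ because $\beta _1$ has $\al _3$-coefficient $1$, so Proposition~\ref{pr:suminR} applied to $(\beta _1,\al _1,\al _2)$ yields $\gamma -\al _1=\beta _1+\al _2\in R^a$ or $\gamma -\al _2=\beta _1+\al _1\in R^a$, provided $\beta _1-\al _1,\beta _1-\al _2\notin R^a$ (I treat $\beta _2=\al _1+\al _2$; the two $B_2$ shapes are similar, using $\al _1+\al _2$ as an auxiliary root). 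When $\beta _1=\al _3$ this closes the argument, since $\al _3-\al _i$ is neither positive nor negative and hence not a root; this is the base case of the induction.

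The genuine obstacle is the descending chain $\beta _1-\al _1\in R^a$ (say), where the clean application above is blocked. The remedy I would use is to feed the lower root back in: applying Proposition~\ref{pr:suminR} to a shifted triple such as $(\beta _1-\al _1,\al _2,\al _1+\al _2)$ produces either $\gamma -\al _1$ directly or an auxiliary root of the form $\gamma -2\al _1$, i.e.\ a root differing from $\gamma $ by a pure $\al _1$-multiple, whereupon the root-string reduction forces $\gamma -\al _1\in R^a$. Organising these shifts as a descent in height, and using that there are at most four plane roots to keep the possible $\beta _2$ and the side-branches under control, is the delicate bookkeeping; termination is guaranteed by the finiteness of $R^a$. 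For (1) I would argue analogously: a lattice point $\eta $ strictly between $\gamma ,\gamma '\in S_3$ satisfies $\eta =\gamma +uw$ with $w\in \ndZ \al _1+\ndZ \al _2$ primitive, and if $\pm w$ is a plane root the root-string reduction gives $\eta \in R^a$. The residual case, where $w$ is not a root, is again the hard part, and is exactly where the bound of four plane roots must be invoked to exclude a non-convex gap in $S_3$.
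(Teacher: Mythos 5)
Your setup is partly sound: the reduction of the plane to types $A_1\times A_1$, $A_2$, $B_2$ via Proposition~\ref{pr:R=Fseq}, the appeal to Lemma~\ref{le:square} when $c^a_{12}=0$, and the root-string reduction (choose $n^\ast$ maximal with $\gamma-n^\ast\al_i\in R^a$, then apply Lemma~\ref{le:base2} and Corollary~\ref{co:r2conv}) are all correct. But the argument has genuine gaps exactly where the content of the lemma lies. For part~(2), your induction is only closed in the unobstructed case $\beta_1-\al_1,\beta_1-\al_2\notin R^a$; in the blocked case your shifted application of Proposition~\ref{pr:suminR} to $(\beta_1-\al_1,\al_2,\al_1+\al_2)$ requires new difference conditions (e.g.\ $(\beta_1-\al_1)-\al_2\notin R^a$) that can again fail, and you give no argument that this branching terminates in a controlled set of cases --- ``termination by finiteness of $R^a$'' does not by itself deliver the conclusion $\gamma-\al_1\in R^a$ or $\gamma-\al_2\in R^a$. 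Even your base case is incomplete for the $B_2$ shapes: applying Proposition~\ref{pr:suminR} to $(\beta_1,\al_1,\al_1+\al_2)$ may only produce $\gamma-\al_1-\al_2\in R^a$, which is not one of the two required conclusions. For part~(1), convexity in the paper's sense requires every lattice point of the convex hull of $S_3$ (inside the affine plane) to lie in $S_3$; your segment argument only treats points collinear with two known roots, and you explicitly leave open the case where the direction is not a root, which is the hard case.

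For comparison, the paper's (equally terse) proof runs in the opposite direction: it generates elements of $S_3$ as $w^{-1}(\al_3)$ for words $w$ in the reflections $\s_1,\s_2$, and then fills in and bounds $S_3$ using Lemma~\ref{le:badroots}, exactly as in the proof of Lemma~\ref{le:square}(1). The point of the hypothesis is that it pins down the rank-two restriction to $\{1,2\}$, hence the Cartan entries $c^b_{12},c^b_{21}$ at every object $b$ in the $\langle\rfl_1,\rfl_2\rangle$-orbit of $a$, so the orbit of $\al_3$ and the resulting shape of $S_3$ can be enumerated in finitely many explicit configurations; that is what makes the ``case by case'' check finite. To complete your write-up you would either have to carry out that enumeration, or replace Proposition~\ref{pr:suminR} in your inductive step by Lemma~\ref{le:badroots}, which fills whole strings at once and avoids the uncontrolled descent.
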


\begin{proof}
  Consider the roots of the form $w^{-1}(\al _3)\in R^a$,
  where $w\in \Homsfrom{a}$
  is a product of reflections $\s _1^b$, $\s _2^b$ with $b\in A$. All of these
  roots belong to $S_3$. Using
  Lemma~\ref{le:badroots} the claims of the lemma can be checked case by case,
  similarly to the proof of Lemma~\ref{le:square}.
\end{proof}

\begin{remar}
  The lemma can be proven by elementary calculations, since all nonsimple
  positive roots in $(\ndZ \al _1+\ndZ \al _2)\cap R^a$
  are of the form say $\al _1+k\al _2$, $k\in \ndN $.
  We will see in Theorem~\ref{th:class}
  that the classification of connected Cartan schemes of rank three
  admitting a finite irreducible root system has a finite set of solutions.
  Thus it is possible to check the claim of the lemma for any such Cartan
  scheme. Using computer calculations one obtains that
  the lemma holds without any restriction on the (finite) cardinality of
  $R^a\cap (\ndN _0\al _1+\ndN _0\al _2)$.
\end{remar}

\begin{lemma} \label{le:root_diffs2}
  Let $\al ,\beta ,\gamma \in R^a$ such that
  $\Vol _3(\al ,\beta ,\gamma )=1$. Assume that $\al -\beta $,
  $\beta -\gamma $, $\al -\gamma \notin R^a$ and that $\{\al ,\beta ,\gamma \}$
  is not a base for $\ndZ ^I$ at $a$. Then the following hold.

  (1) There exist $w\in \Homsfrom{a}$ and $n_1,n_2\in \ndN $ such that
  $w(\al )$, $w(\beta )$, and $w(\gamma -n_1\al -n_2\beta )$ are simple roots.

  (2) None of the vectors
  $\al -k\beta $, $\al -k\gamma $, $\beta -k\al $, $\beta -k\gamma $,
  $\gamma -k\al $, $\gamma -k\beta $, where $k\in \ndN $,
  is contained in $R^a$.

  (3) $\al +\beta $, $\al +\gamma $, $\beta +\gamma \in R^a$.

  (4) One of the sets $\{\al +2\beta ,\beta +2\gamma ,\gamma +2\al \}$ and
  $\{2\al +\beta ,2\beta +\gamma ,2\gamma +\al \}$ is contained in $R^a$, the
  other one has trivial intersection with $R^a$.

  (5) None of the vectors
  $\gamma -\al -k\beta $, $\gamma -k\al -\beta $,
  $\beta -\gamma -k\al $, $\beta -k\gamma -\al $,
  $\al -\beta -k\gamma $, $\al -k\beta -\gamma $, where $k\in \ndN _0$,
  is contained in $R^a$.

  (6) Assume that $\al +2\beta \in R^a$. Let $k\in \ndN $ such that
  $\al +k\beta \in R^a$, $\al +(k+1)\beta \notin R^a$.
  Let $\al '=\al +k\beta $, $\beta '=-\beta $, $\gamma '=\gamma +\beta $.
  Then $\Vol _3(\al ',\beta ',\gamma ')=1$, $\{\al ',\beta ',\gamma '\}$ is
  not a base for $\ndZ ^I$ at $a$, and none of $\al '-\beta '$, $\al '-\gamma '$,
  $\beta '-\gamma '$ is contained in $R^a$.

  (7) None of the vectors
  $\al +3\beta $, $\beta +3\gamma $, $\gamma +3\al $,
  $3\al +\beta $, $3\beta +\gamma $, $3\gamma +\al $ is contained in $R^a$.
  In particular, $k=2$ holds in (6).
\end{lemma}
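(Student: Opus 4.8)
\emph{Reduction (part (1)).} My plan is to put the triple into a normal form and read each claim off the rank-two results of the previous sections. Since $\Vol _3(\al ,\beta ,\gamma )=1$, any two of $\al ,\beta ,\gamma $ form part of a $\ndZ $-basis, so $\Vol _2=1$ for each pair; as the three pairwise differences are not roots, Lemma~\ref{le:base2} shows that each pair is a base for the plane it spans. Using $\al -\beta \notin R^a$, Theorem~\ref{th:genposk} and Lemma~\ref{le:base2} give (exactly as in the proof of Proposition~\ref{pr:suminR}) some $w\in \Hom (a,b)$ with $w(\al )=\al _{i_1}$, $w(\beta )=\al _{i_2}$ and $w(\gamma )=\al _{i_3}+n_1\al _{i_1}+n_2\al _{i_2}$, $n_1,n_2\in \ndN _0$. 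I would then rule out $n_1=0$ or $n_2=0$: if both vanish then $\{\al ,\beta ,\gamma \}$ is a base, contrary to hypothesis, and if only $n_1=0$ then Corollary~\ref{co:cij} applied to $w(\beta -\gamma )=-((n_2-1)\al _{i_2}+\al _{i_3})$ shows $\beta -\gamma \in R^a$, again impossible. Hence $n_1,n_2\ge 1$, proving (1). As $w$ preserves $\Vol _k$, bases and membership in $R$, I may and will assume from now on that $\al =\al _1$, $\beta =\al _2$, $\gamma =\al _3+n_1\al _1+n_2\al _2$ with $n_1,n_2\ge 1$.

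\emph{The direct claims (2), (3), (5).} Part (2) is immediate from the first paragraph: in each of the three base planes a root is a nonnegative or nonpositive combination of the two base vectors, whereas every vector listed in (2), such as $\al -k\beta $, has strictly mixed sign. For (3) I would apply Proposition~\ref{pr:suminR} to $(\al ,\beta ,\gamma )$, $(\beta ,\al ,\gamma )$ and $(\gamma ,\al ,\beta )$; this shows that at most one of the three sums fails to be a root. If, say, $\al +\beta \notin R^a$, then $c^a_{12}=0$ by Corollary~\ref{co:cij}, and Lemma~\ref{le:square}(1) identifies $R^a\cap (\al _3+\ndZ \al _1+\ndZ \al _2)$ with a rectangular box; since $n_1\ge 1$, the vector $\gamma -\al =\al _3+(n_1-1)\al _1+n_2\al _2$ lies in this box, hence is a root, contradicting $\al -\gamma \notin R^a$. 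By symmetry all three sums are roots. For (5) I would note that each listed vector has one simple coordinate pinned to $\pm 1$ (the value it inherits from $\al $, $\beta $ or $\gamma $) and hence lies in an affine slab; I would then run the slab case-analysis of Lemmas~\ref{le:square} and \ref{le:root_diffs1}, feeding in Lemma~\ref{le:badroots} and part (2), to exclude membership in $R^a$.

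\emph{The cyclic dichotomy (4).} This is the combinatorial core. I would first translate each of the six vectors into a statement about a single Cartan entry: in $V^a(\al ,\beta )$ one has $\al +2\beta \in R^a\iff -c^a_{21}\ge 2$ and $2\al +\beta \in R^a\iff -c^a_{12}\ge 2$, and after mapping the relevant pair to simple roots (Corollary~\ref{simple_rkk}) the analogous equivalences hold in $V^a(\beta ,\gamma )$ and $V^a(\gamma ,\al )$ via Corollary~\ref{co:cij}. The claim then becomes: (i) in each of the three planes exactly one of the two ``doubling'' vectors is a root, and (ii) the three that occur are cyclically aligned around $\al \to \beta \to \gamma \to \al $. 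For (i) the third root is indispensable: assuming both $\al +2\beta $ and $2\al +\beta $ were roots I would derive a contradiction by combining $\gamma $ with Lemma~\ref{le:badroots} and the forbidden differences of (2). For (ii) I would compare the three normal forms produced by (1) for the three cyclic orderings, using once more that $\{\al ,\beta ,\gamma \}$ is not a base. I expect (4) to be one of the two hardest points.

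\emph{Parts (6), (7), and the main obstacle.} Part (6) is a direct verification: the change of basis $(\al ,\beta ,\gamma )\mapsto (\al +k\beta ,-\beta ,\gamma +\beta )$ has determinant $-1$, so $\Vol _3(\al ',\beta ',\gamma ')=1$; the root $\al _3=-n_1\al '+(1+n_2-n_1k)\beta '+\gamma '$ is a mixed combination because $n_1\ge 1$, so $\{\al ',\beta ',\gamma '\}$ is not a base; and $\al '-\beta '=\al +(k+1)\beta \notin R^a$ by maximality of $k$, $\beta '-\gamma '=-(2\beta +\gamma )\notin R^a$ by (4), and $\al '-\gamma '=-(\gamma -\al -(k-1)\beta )\notin R^a$ by (5). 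For (7) I would argue by contradiction: if $\al +3\beta \in R^a$ then the $k$ of (6) is $\ge 3$, and $(\al ',\beta ',\gamma ')$ is a legitimate input for the lemma by (6). Applying the already-proven parts (1)--(6) to it --- in particular part (4), which forces $\gamma '+2\al '=2\al +\gamma +(2k+1)\beta \in R^a$ --- one obtains, together with $\gamma +2\al \in R^a$ from (4), a long $\beta $-string of roots above $\gamma +2\al $ in the slab $\al _3+\ndZ \al _1+\ndZ \al _2$; confronting this with the non-membership statements of (4) and (5), and with finiteness of $R^a$, should yield the contradiction, whence $\al +3\beta \notin R^a$ and, cyclically, the remaining five vectors of (7). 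The main obstacle is precisely this last confrontation: because no a priori bound on the size of the planes is yet available, one must control exactly which roots the string argument produces so that a genuinely forbidden vector is forced, and it is here that the sharp normal form of (1) and the precise information of (4) and (5) have to be used.
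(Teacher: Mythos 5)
Your parts (1), (2) and (6) are essentially the paper's arguments, and your route to (3) — at most one sum can fail by Proposition~\ref{pr:suminR}, and if $\al +\beta \notin R^a$ then the box description of Lemma~\ref{le:square}(1) together with $n_1\ge 1$ forces $\gamma -\al \in R^a$ — is a correct minor variant of the paper's proof (which instead uses Theorem~\ref{root_is_sum} and Proposition~\ref{pr:R=Fseq}). But (4), (5) and (7) are left as plans rather than proofs, and these are exactly the substantive parts. For (4) your sketch omits the ``at least one'' half entirely: the paper gets it by observing that if $\al +2\beta ,2\al +\beta \notin R^a$ then $V^a(\al ,\beta )$ carries only $\{\pm \al ,\pm \beta ,\pm (\al +\beta )\}$ (Proposition~\ref{pr:R=Fseq}), and Lemma~\ref{le:root_diffs1}(2) then forces $\gamma -\al$ or $\gamma -\beta$ to be a root. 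Your ``exactly one per plane'' step (i) is also aimed at the wrong configuration: the paper's exclusion is of $\gamma +2\al$ and $\gamma +2\beta$ being simultaneously roots (same coefficient-$1$ vector, two different doubled ones), obtained by applying Lemma~\ref{le:badroots} to $\gamma +2\al$ and $\beta -\al$, since $(\gamma +2\al )+2(\beta -\al )=\gamma +2\beta$ would force the forbidden root $\beta -\al$. The alignment then follows by chaining the two facts around the cycle; ``comparing normal forms'' is not a substitute for this.

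For (5) your proposed slab analysis cannot work as stated: Lemma~\ref{le:square} requires a vanishing Cartan entry, which (3) rules out here, and Lemma~\ref{le:root_diffs1} requires the plane to contain at most $4$ positive roots, which is not available; moreover the claim concerns infinitely many $k$, so some descent is needed. The paper first kills $k=2$ using that $\{\gamma +\al ,\al +\beta \}$ is a base (Lemma~\ref{le:base2}) and $\gamma -\al -2\beta =(\gamma +\al )-2(\al +\beta )$, then takes a minimal counterexample $k$, applies part (3) of the lemma itself to the triple $(-\al ,-\beta ,\gamma -\al -k\beta )$ to produce $\gamma -\al -(k+1)\beta \in R^a$ and conclude $k\ge 3$, and finally splits on the parity of $k$ with Lemma~\ref{le:badroots} applied to $\gamma +\al$ and $-(\al +k/2\,\beta )$ (resp.\ the odd analogue). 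None of this is in your outline. For (7) you explicitly leave the contradiction open; the paper's closing move is concrete and short: in the case $\al +2\beta ,\gamma +2\al \in R^a$, parts (6) and (3) give $\gamma +\al +(k+1)\beta \in R^a$, Lemma~\ref{le:badroots} then gives $\gamma +\al +2\beta \in R^a$, and if $\gamma +3\al$ were a root, Lemma~\ref{le:badroots} applied to $\gamma +\al +2\beta$ and $\al -\beta$ (note $(\gamma +\al +2\beta )+2(\al -\beta )=\gamma +3\al$) would force $\al -\beta \in R^a$, a contradiction. So the proposal is sound where it is explicit, but the three hardest items remain genuinely unproved.
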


\begin{proof}
  (1) By Theorem~\ref{th:genposk} there exist $m_1,m_2,n_1,n_2,n_3\in \ndN
  _0$, $i_1,i_2,i_3\in I$, and $w\in \Homsfrom{a}$, such that
  $w(\al )=\al _{i_1}$, $w(\beta )=m_1\al _{i_1}+m_2\al _{i_2}$, and
  $w(\gamma )=n_1\al _{i_1}+n_2\al _{i_2}+n_3\al _{i_3}$. Since $\det w\in
  \{\pm 1\}$ and $\Vol _3(\al ,\beta ,\gamma )=1$,
  this implies that $m_2=n_3=1$. Further, $\beta -\al \notin R^a$,
  and hence $w(\beta )=\al _{i_2}$ by Corollary~\ref{co:cij}.
  Since $\{\al ,\beta ,\gamma \}$ is not a base for $\ndZ ^I$ at $a$,
  we conclude that $w(\gamma )\not=\al _{i_3}$. Then
  Corollary~\ref{co:cij} and the assumptions $\gamma -\al $, $\gamma -\beta
  \notin R^a$ imply that
  $w(\gamma )\notin \al _{i_3}+\ndN _0\al _{i_1}$ and
  $w(\gamma )\notin \al _{i_3}+\ndN _0\al _{i_2}$.
  Thus the claim is proven.

  (2) By (1), $\{\al ,\beta \}$ is a base for $V^a(\al ,\beta )$ at $a$.
  Thus $\al -k\beta \notin R^a$ for all $k\in \ndN $.
  The remaining claims follow by symmetry.

  (3) Suppose that $\al +\beta \notin R^a$.
  By (1) there exist $b\in A$, $w\in \Hom(a,b)$,
  $i_1,i_2,i_3\in I$ and $n_1,n_2\in \ndN $
  such that $w(\al )=\al _{i_1}$, $w(\beta )=\al _{i_2}$, and
  $w(\gamma )=\al _{i_3}+n_1\al _{i_1}+n_2\al _{i_2}\in R^b_+$.
  By Theorem~\ref{root_is_sum} there exist
  $n'_1,n'_2\in \ndN _0$  such that $n'_1\le n_1$, $n'_2\le n_2$,
  $n'_1+n'_2<n_1+n_2$, and
  \[ \al _{i_3}+n'_1\al _{i_1}+n'_2\al _{i_2}\in R^b_+,\quad
  (n_1-n'_1)\al _{i_1}+(n_2-n'_2)\al _{i_2}\in R^b_+. \]
  Since $\al +\beta \notin R^a$, Proposition~\ref{pr:R=Fseq} yields
  that $R^b_+\cap 
  \lspan _\ndZ \{\al _{i_1},\al _{i_2}\}=\{\al _{i_1},\al _{i_2}\}$.
  Thus $\gamma -\al \in R^a$ or $\gamma -\beta \in R^a$. This is a
  contradiction to the assumption of the lemma. Hence $\al +\beta \in R^a$.
  By symmetry we obtain that $\al +\gamma $, $\beta +\gamma \in R^a$.

  (4) Suppose that $\al +2\beta $, $2\al +\beta \notin R^a$.
  By (1) the set $\{\al ,\beta \}$ is a base for $V^a(\al ,\beta )$ at $a$,
  and $\al +\beta \in R^a$ by (3). Then Proposition~\ref{pr:R=Fseq}
  implies that
  $R^a\cap \lspan _\ndZ \{\al ,\beta \}=\{\pm \al ,\pm \beta ,\pm (\al +\beta
  )\}$. Thus (1) and Lemma~\ref{le:root_diffs1}(2) give that $\gamma -\al \in
  R^a$ or $\gamma -\beta \in R^a$, a contradiction to the initial assumption
  of the lemma. Hence by symmetry each of the sets $\{\al +2\beta ,2\al
  +\beta \}$, $\{\al +2\gamma ,2\al +\gamma \}$, $\{\beta +2\gamma ,2\beta
  +\gamma \}$ contains at least one element of $R^a$.

  Assume now that $\gamma +2\al $, $\gamma +2\beta \in R^a$. By changing the
  object via (1) we may assume that $\al $, $\beta $, and
  $\gamma -n_1\al -n_2\beta $
  are simple roots for some $n_1,n_2\in \ndN $. Then Lemma~\ref{le:badroots}
  applies to $\gamma +2\al \in R^a_+$ and $\beta -\al $, and tells that $\beta
  -\al \in R^a$. This gives a contradiction.

  By the previous two paragraphs we conclude that if $\gamma +2\al \in R^a$,
  then $\gamma +2\beta \notin R^a$, and hence $\beta +2\gamma \in R^a$.
  Similarly, we also obtain that $\al +2\beta \in R^a$. By symmetry this
  implies (4).

  (5) By symmetry it suffices to prove that
  $\gamma -(\al +k\beta )\notin R^a$ for all $k\in \ndN _0$. For $k=0$ the
  claim holds by assumption.

  First we prove that $\gamma -(\al +2\beta )\notin R^a$.
  By (3) we know that $\gamma +\al $, $\al +\beta \in R^a$, and
  $\gamma -\beta \notin R^a$ by assumption.
  Since $\Vol _2(\gamma +\al ,\al +\beta )=1$, Lemma~\ref{le:base2} gives that
  $\{\gamma +\al , \al +\beta \}$ is a base for
  $V^a(\gamma +\al ,\al +\beta )$ at $a$.
  Since $\gamma -(\al +2\beta )=(\gamma +\al )
  -2(\al +\beta )$, we conclude that
  $\gamma -(\al +2\beta )\notin R^a$.

  Now let $k\in \ndN $. Assume that $\gamma -(\al +k\beta )\in R^a$ and
  that $k$ is minimal with this property. Let $\al '=-\al $, $\beta '=-\beta $,
  $\gamma '=\gamma -(\al +k\beta )$. Then $\al ',\beta ',\gamma '\in R^a$
  with $\Vol _3(\al ',\beta ',\gamma ')=1$. Moreover, $\al '-\beta '\notin
  R^a$ by assumption, $\al '-\gamma '=-(\gamma -k\beta )\notin R^a$ by (2),
  and $\beta '-\gamma '=-(\gamma -\al -(k-1)\beta )\notin R^a$
  by the minimality of $k$.
  Further, $\{\al ',\beta ',\gamma '\}$ is not a base for $R^a$, since
  $\gamma =\gamma '-\al '-k\beta '$. Hence Claim (3) holds for $\al
  ',\beta ',\gamma '$. In particular,
  \[ \gamma '+\beta '=\gamma -(\al +(k+1)\beta )\in R^a. \]
  This and the previous paragraph imply that $k\ge 3$.

  We distinguish two cases depending on the parity of $k$.
  First assume that $k$ is even. Let $\al '=\gamma +\al $ and
  $\beta '=-(\al +k/2\beta )$. Then $\Vol _2(\al ',\beta ')=1$ and
  $\al '+2\beta '=\gamma -(\al +k\beta )\in R^a$.
  Lemma~\ref{le:badroots} applied to $\al ',\beta '$ gives that $\gamma
  -k/2\beta =\al '+\beta '\in R^a$, which contradicts (2).

  Finally, the case of odd $k$ can be excluded similarly by considering
  $V^a(\gamma +\al ,\gamma -(\al +(k+1)\beta ))$.

  (6) We get $\Vol _3(\al ',\beta ',\gamma ')=1$
  since $\Vol _3(\al ,\beta ,\gamma )=1$ and
  $\Vol _3$ is invariant under the right action of $\GL (\ndZ ^3)$.
  Further, $\beta '-\gamma '=-(2\beta +\gamma )\notin R^a$ by (4), and
  $\al '-\gamma '\notin R^a$ by (5). Finally, $(\al ',\beta ',\gamma ')$ is
  not a base for $\ndZ ^I$ at $a$, since
  $R^a\ni \gamma -n_1\al -n_2\beta =\gamma '-n_1\al '+(1+n_2-kn_1)\beta '$,
  where $n_1,n_2\in \ndN $ are as in (1).

  (7) We prove that $\gamma +3\al \notin R^a$. The rest follows by symmetry.
  If $2\al +\beta \in R^a$, then $\gamma +2\al \notin R^a$ by (4), and hence
  $\gamma +3\al \notin R^a$. Otherwise
  $\al +2\beta ,\gamma +2\al \in R^a$ by (4).
  Let $k$, $\al '$, $\beta '$, $\gamma '$ be as in (6). Then (6) and (3) give
  that
  $R^a\ni \gamma '+\al '=\gamma +\al +(k+1)\beta $. Since $\gamma +\al \in
  R^a$, Lemma~\ref{le:badroots} implies that
  $\gamma +\al +2\beta \in R^a$. Let $w$ be as in (1).
  If $\gamma +3\al \in R^a$, then Lemma~\ref{le:badroots}
  for the vectors $w(\gamma +\al +2\beta )$ and $w(\al -\beta )$ implies that
  $w(\al -\beta )\in R^a$, a contradiction. Thus $\gamma +3\al \notin R^a$.
\end{proof}

Recall that $\cC$ is a Cartan scheme of rank three and $\rsC \re (\cC )$ is
a finite irreducible root system of type $\cC$.

\begin{theor}\label{root_diffs}
Let $a\in A$ and $\al ,\beta,\gamma\in R^a$.
If $\Vol _3(\al ,\beta,\gamma)=1$
and none of $\alpha-\beta$, $\alpha-\gamma$, $\beta-\gamma$ are contained in
$R^a$, then $\{\al ,\beta,\gamma \}$ is a base for $\ndZ ^I$ at $a$.
\end{theor}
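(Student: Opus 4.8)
The plan is to argue by contradiction. Suppose $\{\alpha,\beta,\gamma\}$ is \emph{not} a base for $\ndZ^I$ at $a$. Together with the hypotheses $\Vol_3(\alpha,\beta,\gamma)=1$ and $\alpha-\beta,\alpha-\gamma,\beta-\gamma\notin R^a$, this is exactly the situation of Lemma~\ref{le:root_diffs2}, so all of its conclusions (1)--(7) are available. By part~(4), after possibly interchanging $\alpha$ and $\beta$ we may assume $\alpha+2\beta\in R^a$ and $2\alpha+\beta\notin R^a$. Next I would invoke part~(1): there are $b\in A$, $w\in\Hom(a,b)$, indices $i_1,i_2,i_3\in I$ and $n_1,n_2\in\ndN$ with $w(\alpha)=\alpha_{i_1}$, $w(\beta)=\alpha_{i_2}$ and $w(\gamma)=\alpha_{i_3}+n_1\alpha_{i_1}+n_2\alpha_{i_2}$. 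Since $n_1,n_2\ge 1$, the root $w(\gamma)$ is not simple, and it lies in the layer $S:=(\alpha_{i_3}+\ndZ\alpha_{i_1}+\ndZ\alpha_{i_2})\cap R^b$ with strictly positive $\alpha_{i_1}$- and $\alpha_{i_2}$-coefficients.

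The contradiction should come from the convexity statement in Lemma~\ref{le:root_diffs1}(2), applied to $w(\gamma)\in S$ at the object $b$: since $w(\gamma)\neq\alpha_{i_3}$, that lemma yields $w(\gamma)-\alpha_{i_1}\in R^b$ or $w(\gamma)-\alpha_{i_2}\in R^b$. Pulling back through the isomorphism $w^{-1}$, which maps $R^b$ bijectively onto $R^a$ by (R3), these become $\gamma-\alpha\in R^a$ or $\gamma-\beta\in R^a$, both excluded by hypothesis, which finishes the proof. To legitimately apply Lemma~\ref{le:root_diffs1} I must first verify its hypothesis at $b$, namely that $V^b(\alpha_{i_1},\alpha_{i_2})$ contains at most four positive roots. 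Here parts~(4) and~(7) do the work: transporting the relations to $b$ via $w$ gives $\alpha_{i_1}+\alpha_{i_2},\alpha_{i_1}+2\alpha_{i_2}\in R^b$ while $2\alpha_{i_1}+\alpha_{i_2},\alpha_{i_1}+3\alpha_{i_2}\notin R^b$, so Corollary~\ref{co:cij} forces $c^b_{i_1i_2}=-1$ and $c^b_{i_2i_1}=-2$, identifying the rank-two subsystem of that plane as one of type $B_2$ with positive roots $\alpha_{i_1},\alpha_{i_2},\alpha_{i_1}+\alpha_{i_2},\alpha_{i_1}+2\alpha_{i_2}$.

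The step I expect to be the main obstacle is precisely this verification that $V^b(\alpha_{i_1},\alpha_{i_2})$ carries no further ``interior'' root such as $2\alpha_{i_1}+3\alpha_{i_2}$. The Cartan entries at the single object $b$ only pin down the roots adjacent to the two walls, while in a general rank-two Weyl groupoid a root strictly between $\alpha_{i_1}+2\alpha_{i_2}$ and $\alpha_{i_1}+\alpha_{i_2}$ is governed by the neighbouring object; ruling such roots out is exactly the point where the hypothesis ``at most four positive roots'' of Lemma~\ref{le:root_diffs1} is used, and the remark following that lemma records that this restriction is only removable in general through the eventual finite classification. I would therefore either confirm that the $B_2$-data forces the four-root configuration in the case at hand, or fall back on the reproduction property in part~(6): iterating it together with the cyclic symmetry among $\alpha,\beta,\gamma$ manufactures ever-larger positive roots (for instance part~(4) applied to the triple $(\alpha+2\beta,-\beta,\gamma+\beta)$ produces $2\alpha+5\beta+\gamma\in R^a$), and since one application of~(6) is an involution one must alternate the three cyclic directions to keep generating genuinely new roots, contradicting the finiteness of $R^a$.
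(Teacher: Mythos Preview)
Your overall plan matches the paper's: reduce to Lemma~\ref{le:root_diffs1}(2) after transporting via part~(1), and for that verify that the plane $V^b(\alpha_{i_1},\alpha_{i_2})$ has at most four positive roots. You also correctly isolate the gap: the Cartan entries $c^b_{i_1i_2}=-1$, $c^b_{i_2i_1}=-2$ alone do \emph{not} force a $B_2$ subsystem in a Weyl groupoid, because they say nothing about $2\alpha_{i_1}+3\alpha_{i_2}$. Neither of your two proposed fixes closes this gap as written. The ``$B_2$-data'' option fails for exactly the reason you articulate. The ``iterate~(6) to manufacture infinitely many roots'' option is left vague; you note yourself that one application of~(6) is an involution, and the alternating-cycle variant is not made precise and is not needed.

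The actual fix is a single line, and you are one step away from it. You already apply part~(4) to the triple $(\alpha',\beta',\gamma')=(\alpha+2\beta,-\beta,\gamma+\beta)$ supplied by~(6) and~(7), but you read off the wrong half of the conclusion. Part~(4) is a dichotomy: since $\alpha'+2\beta'=\alpha\in R^a$, the \emph{other} set $\{2\alpha'+\beta',\,2\beta'+\gamma',\,2\gamma'+\alpha'\}$ has empty intersection with $R^a$, and in particular
\[
2\alpha'+\beta'=2(\alpha+2\beta)-\beta=2\alpha+3\beta\notin R^a.
\]
Together with $2\alpha+\beta\notin R^a$ and $\alpha+3\beta\notin R^a$ (from~(4) and~(7) for the original triple), Proposition~\ref{pr:R=Fseq} now forces $V^a(\alpha,\beta)\cap R^a_+=\{\alpha,\alpha+\beta,\alpha+2\beta,\beta\}$: in the $\cF$-sequence $\beta,\alpha+2\beta,\alpha+\beta,\alpha$ the three forbidden mediants $\alpha+3\beta$, $2\alpha+3\beta$, $2\alpha+\beta$ are exactly what would be inserted between consecutive pairs, so nothing further can appear. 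With four positive roots in the plane, Lemma~\ref{le:root_diffs1}(2) applies at $b$ and yields $\gamma-\alpha\in R^a$ or $\gamma-\beta\in R^a$, the desired contradiction. This is precisely the paper's proof.
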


\begin{proof}
  Assume to the contrary that $\{\al ,\beta ,\gamma \}$ is not a base for
  $\ndZ ^I$ at $a$.
  Exchanging $\al $ and $\beta $ if necessary, by
  Lemma~\ref{le:root_diffs2}(4)
  we may assume that $\al +2\beta \in R^a$.
  By Lemma~\ref{le:root_diffs2}(6),(7) the triple $(\al +2\beta
  ,-\beta , \gamma +\beta )$ satisfies the assumptions of
  Lemma~\ref{le:root_diffs2}, and $(\al +2\beta )+2(-\beta )=\al \in R^a$.
  Hence $2\al +3\beta =2(\al +2\beta )+(-\beta )\notin R^a$ by
  Lemma~\ref{le:root_diffs2}(4). Thus $V^a(\al ,\beta )\cap R^a=\{\pm \al ,
  \pm (\al +\beta ),\pm (\al +2\beta ), \pm \beta \}$ by
  Proposition~\ref{pr:R=Fseq}, and hence, using
  Lemma~\ref{le:root_diffs2}(1), we obtain from Lemma~\ref{le:root_diffs1}(2)
  that $\gamma -\al \in R^a$ or $\gamma -\beta \in R^a$. This is a
  contradiction to our initial assumption, and hence $\{\al ,\beta ,\gamma \}$
  is a base for $\ndZ ^I$ at $a$.
\end{proof}

\begin{corol}\label{convex_diff2}
Let $a\in A$ and $\gamma _1,\gamma _2,\al \in R^a$.
Assume that $\{\gamma _1,\gamma _2\}$ is a base for $V^a(\gamma _1,\gamma _2)$
at $a$ and that $\Vol _3(\gamma _1,\gamma _2,\al )=1$. Then
either $\{\gamma _1,\gamma _2,\al \}$ is a base for $\ndZ ^I$ at $a$,
or one of $\al -\gamma _1$, $\al-\gamma _2$ is contained in $R^a$.
\end{corol}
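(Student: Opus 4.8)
The plan is to recognize this corollary as essentially a direct specialization of Theorem~\ref{root_diffs}, where the base hypothesis on $\{\gamma _1,\gamma _2\}$ supplies one of the three ``difference-is-not-a-root'' conditions that the theorem demands. Accordingly, I would prove the stated dichotomy by arguing against its second alternative: assume that neither $\al -\gamma _1$ nor $\al -\gamma _2$ lies in $R^a$, and deduce that then $\{\gamma _1,\gamma _2,\al \}$ must be a base for $\ndZ ^I$ at $a$.

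First I would extract from the base hypothesis the remaining difference condition. Since $\{\gamma _1,\gamma _2\}$ is a base for $V^a(\gamma _1,\gamma _2)$ at $a$ and $\gamma _1,\gamma _2$ span a rank-two plane (so $\gamma _1\not=\pm \gamma _2$), Lemma~\ref{le:base2} gives $\gamma _1-\gamma _2\notin R^a$. Next I would invoke the symmetry of $R^a$ under negation, which follows from axiom (R1) (that is, $R^a=R^a_+\cup -R^a_+$): a vector lies in $R^a$ if and only if its negative does. Hence the standing assumptions $\al -\gamma _i\notin R^a$ are equivalent to $\gamma _i-\al \notin R^a$ for $i=1,2$. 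Lining these up for the ordered triple $(\gamma _1,\gamma _2,\al )$, none of the three pairwise differences $\gamma _1-\gamma _2$, $\gamma _1-\al $, $\gamma _2-\al $ lies in $R^a$, while $\Vol _3(\gamma _1,\gamma _2,\al )=1$ holds by hypothesis.

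Finally I would apply Theorem~\ref{root_diffs} to $(\gamma _1,\gamma _2,\al )$: its hypotheses are now met verbatim, so it yields that $\{\gamma _1,\gamma _2,\al \}$ is a base for $\ndZ ^I$ at $a$. This establishes the first alternative under the negation of the second, which is exactly the disjunction claimed.

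As for the main obstacle, there is no genuine computational difficulty here; all the substantive work is carried by Theorem~\ref{root_diffs} and its feeder Lemma~\ref{le:root_diffs2}. The only point requiring care is the bookkeeping of where each difference hypothesis originates: the base assumption on $\{\gamma _1,\gamma _2\}$ must be converted into $\gamma _1-\gamma _2\notin R^a$ through Lemma~\ref{le:base2}, and the negation-symmetry of $R^a$ must be used to identify $\al -\gamma _i\notin R^a$ with $\gamma _i-\al \notin R^a$. Once these three conditions are assembled, the corollary is an immediate instance of Theorem~\ref{root_diffs}.
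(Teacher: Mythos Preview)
Your proof is correct and matches the paper's approach: the paper states the corollary immediately after Theorem~\ref{root_diffs} with no proof, treating it as a direct consequence, and your argument spells out exactly this deduction---using Lemma~\ref{le:base2} to extract $\gamma_1-\gamma_2\notin R^a$ from the base hypothesis and then applying Theorem~\ref{root_diffs}.
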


For the proof of Theorem~\ref{th:class} we need a bound for the entries of the
Cartan matrices of $\cC $. To get this bound we use the following.

\begin{lemma} \label{le:someroots}
  Let $a\in A$. 

  (1) At most one of $c^a_{12}$, $c^a_{13}$, $c^a_{23}$ is zero.

  (2) $\al _1+\al _2+\al _3\in R^a$.

  (3) Let $k\in \ndZ $.
  Then $k\al _1+\al _2+\al _3\in R^a$ if and only if $k_1\le k\le k_2$, where
  \begin{align*}
    k_1=
    \begin{cases}
      0 & \text{if $c^a_{23}<0$,}\\
      1 & \text{if $c^a_{23}=0$,}
    \end{cases}
    \quad
    k_2=
    \begin{cases}
      -c^a_{12}-c^a_{13} & \text{if $c^{\rfl _1(a)}_{23}<0$,}\\
      -c^a_{12}-c^a_{13}-1 & \text{if $c^{\rfl _1(a)}_{23}=0$.}
    \end{cases}
  \end{align*}

  (4) We have
  $2\al _1+\al _2+\al _3\in R^a$ if and only if either
  $c^a_{12}+c^a_{13}\le -3$ or $c^a_{12}+c^a_{13}=-2$, $c^{\rfl
  _1(a)}_{23}<0$.

  (5) Assume that
  \begin{align}
    \#(R^a_+\cap (\ndZ \al _1+\ndZ \al _2))\ge 5.
    \label{eq:Rbig}
  \end{align}
  Then there
  exist $k\in \ndN _0$ such that $k\al _1+2\al _2+\al _3\in R^a$. Let $k_0$
  be the smallest among all such $k$. Then $k_0$ is given by the following.
  \begin{align*}
    \begin{cases}
      0 & \text{if $c^a_{23}\le -2$,}\\
      1 & \text{if $-1\le c^a_{23}\le 0$,
      $c^a_{21}+c^a_{23}\le -2$, $c^{\rfl _2(a)}_{13}<0$,}\\
      1 & \text{if $-1\le c^a_{23}\le 0$,
      $c^a_{21}+c^a_{23}\le -3$, $c^{\rfl _2(a)}_{13}=0$,}\\
      2 & \text{if $c^a_{21}=c^a_{23}=-1$, $c^{\rfl _2(a)}_{13}=0$,}\\
      2 & \text{if $c^a_{21}=-1$, $c^a_{23}=0$, $c^{\rfl _2(a)}_{13}\le -2$,}\\
      3 & \text{if $c^a_{21}=-1$, $c^a_{23}=0$, $c^{\rfl _2(a)}_{13}=-1$,
      $c^{\rfl _2(a)}_{12}\le -3$,}\\
      3 & \text{if $c^a_{21}=-1$, $c^a_{23}=0$, $c^{\rfl _2(a)}_{13}=-1$,
      $c^{\rfl _2(a)}_{12}=-2$, $c^{\rfl _1\rfl _2(a)}_{23}<0$,}\\
      4 & \text{otherwise.}
    \end{cases}
  \end{align*}
  Further, if $c^a_{13}=0$ then $k_0\le 2$.
\end{lemma}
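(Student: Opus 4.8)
The plan is to treat the five assertions in increasing order of difficulty, reusing the earlier reflection machinery throughout.

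For (1) I would argue that by \cite[Prop.\,4.6]{a-CH09a} irreducibility forces $C^a$ to be indecomposable. Encode the off-diagonal pattern as a graph on $\{1,2,3\}$ with an edge $\{i,j\}$ whenever $c^a_{ij}\neq 0$; by (M2) this is well defined, and indecomposability of $C^a$ means exactly that this graph is connected. Since a connected graph on three vertices has at least two edges, at most one of $c^a_{12},c^a_{13},c^a_{23}$ can vanish.

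The core is to prove (2) and (3) simultaneously by analysing the whole line $k\al _1+\al _2+\al _3$, $k\in \ndZ $. First I would show that $\{k\in \ndZ \mid k\al _1+\al _2+\al _3\in R^a\}$ is an interval: since $\Vol _2(\al _2+\al _3,\al _1)=1$ and $(-\ndN (\al _2+\al _3)+\ndZ \al _1)\cap \ndN _0^I=\emptyset $, Lemma~\ref{le:badroots} (equivalently Corollary~\ref{co:r2conv}) fills in all intermediate values once the endpoints are roots. The smallest admissible $k$ is computed directly: $\al _2+\al _3\in R^a$ iff $c^a_{23}<0$ by Corollary~\ref{co:cij}, giving $k_1=0$; while if $c^a_{23}=0$ then $\al _2+\al _3\notin R^a$ but $\al _1+\al _2+\al _3\in R^a$ by Lemma~\ref{le:square} together with (1), giving $k_1=1$; axiom (R1) excludes $k<0$. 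For the largest admissible $k$ I would apply $\s _1^a$, which sends this line bijectively onto the line $\al _2+\al _3+j\al _1$ at $b=\rfl _1(a)$ via $j=-k-c^a_{12}-c^a_{13}$; the largest $k$ corresponds to the smallest $j$, and the smallest $j$ is found by the very same direct computation at $b$, now governed by $c^b_{23}=c^{\rfl _1(a)}_{23}$ (using (C2) to keep $c^b_{1\ell }=c^a_{1\ell }$). This yields $k_2=-c^a_{12}-c^a_{13}$ or $-c^a_{12}-c^a_{13}-1$ exactly as stated. Assertion (2) is then the special case $1\in [k_1,k_2]$, which follows from (1); and (4) is the special case $2\in [k_1,k_2]$: since $k_1\le 1$ the condition reduces to $k_2\ge 2$, and unwinding the two formulas for $k_2$ gives precisely ``$c^a_{12}+c^a_{13}\le -3$, or $c^a_{12}+c^a_{13}=-2$ with $c^{\rfl _1(a)}_{23}<0$''.

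Part (5) is where the real work lies, and I expect it to be the main obstacle. Here the transverse vector is $2\al _2+\al _3$ rather than $\al _2+\al _3$, so a single reflection no longer suffices and one must descend through several objects. The base case is immediate: if $c^a_{23}\le -2$ then $\al _3+2\al _2\in R^a$ by Corollary~\ref{co:cij}, so $k_0=0$. When $c^a_{23}\in \{-1,0\}$ the root $2\al _2+\al _3$ is absent, and I would apply $\s _2^a$, then $\s _1^{\rfl _2(a)}$, transporting the family $k\al _1+2\al _2+\al _3$ to lines of $\al _2$- or $\al _3$-coefficient $1$ at $\rfl _2(a)$ and $\rfl _1\rfl _2(a)$, where parts (3)--(4) and Lemma~\ref{le:square} apply. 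Each successive reflection is admissible only past a threshold on the Cartan entry met at that object --- first $c^a_{21}$ and $c^a_{23}$, then $c^{\rfl _2(a)}_{13}$, and finally $c^{\rfl _2(a)}_{12}$ and $c^{\rfl _1\rfl _2(a)}_{23}$ --- and these thresholds are precisely the seven cases in the statement, with $k_0=4$ when none is met. Throughout, Lemma~\ref{le:badroots} supplies the intermediate roots, and the hypothesis \eqref{eq:Rbig} guarantees that the $\{1,2\}$-plane carries enough roots for a vector of $\al _2$-coefficient $2$ to be reachable at all, so that $k_0$ exists. The final clause $c^a_{13}=0\Rightarrow k_0\le 2$ I would obtain from Lemma~\ref{le:square}: when $\al _1$ and $\al _3$ commute the admissible $\al _1$-coefficients are bounded directly, cutting off the deeper cases. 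The principal difficulties are organising this case tree so that it is manifestly exhaustive, and tracking the Cartan entries correctly across the three objects $a$, $\rfl _2(a)$, $\rfl _1\rfl _2(a)$.
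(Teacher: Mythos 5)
Your treatment of parts (1)--(4) is correct and follows essentially the same route as the paper: irreducibility for (1), an anchor root on the line $k\al _1+\al _2+\al _3$ plus interval-filling via Lemma~\ref{le:badroots} and the reflection $\s _1$ for (2) and (3), and (4) as the specialization $k_2\ge 2$. The only deviation is cosmetic: where you invoke Lemma~\ref{le:square} (with permuted indices) to produce the anchor $\al _1+\al _2+\al _3$ when $c^a_{23}=0$, the paper reflects $\al _2+\al _3\in R^{\rfl _1(a)}$ back to $a$; both work, and your observation that $k_1\le k_2$ is forced by part (1) is the right way to recover (2) from (3).

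For part (5), however, your proposal is a plan rather than a proof. The strategy you describe --- reduce $k\al _1+2\al _2+\al _3$ via $\s _2^a$ and $\s _1\s _2^a$ to vectors with $\al _2$-coefficient $1$ and then apply (3), (4) and Corollary~\ref{co:cij} at the objects $\rfl _2(a)$ and $\rfl _1\rfl _2(a)$ --- is exactly the paper's, but the entire content of (5) lies in carrying out that case tree: one must use \eqref{eq:Rbig} to force $c^a_{21}<0$ and $c^{\rfl _2(a)}_{12}\le -2$, use (1) at $\rfl _2(a)$ and $\rfl _1\rfl _2(a)$ to eliminate subcases, and check that the seven listed conditions are exhaustive and mutually consistent; you explicitly defer all of this. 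Moreover, your justification of the final clause ($c^a_{13}=0\Rightarrow k_0\le 2$) via Lemma~\ref{le:square} does not work: with $\al _1$ and $\al _3$ commuting, that lemma describes the roots $\al _2+k_1\al _1+k_3\al _3$, i.e.\ those with $\al _2$-coefficient $1$, and says nothing about $k\al _1+2\al _2+\al _3$. The clause instead follows from the completed case table together with (1): $c^a_{13}=0$ forces $c^a_{23}<0$, and every case of the table yielding $k_0\ge 3$ requires $c^a_{23}=0$.
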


\begin{proof}
  We may assume that $\cC $ is connected. Then, since $\rsC \re (\cC )$ is irreducible,
  Claim (1) holds by \cite[Def.\,4.5, Prop.\,4.6]{a-CH09a}.

  (2) The claim is invariant under permutation of $I$.
  Thus by (1) we may assume that $c^a_{23}\not=0$. Hence $\al _2+\al _3\in
  R^a$. Assume first that $c^a_{13}=0$. Then $c^{\rfl _1(a)}_{13}=0$ by (C2),
  $c^{\rfl _1(a)}_{23}\not=0$ by (1), and $\al _2+\al _3\in R^{\rfl _1(a)}_+$.
  Hence $\s^{\rfl _1(a)}_1(\al _2+\al _3)=-c^a_{12}\al _1+\al _2+\al _3\in
  R^a$. Therefore (2) holds by Lemma~\ref{le:badroots}
  for $\al =\al _2+\al _3$ and $\beta =\al _1$.

  Assume now that $c^a_{13}\not=0$. By symmetry and the previous paragraph we
  may also assume that $c^a_{12},c^a_{23}\not=0$. Let $b=\rfl _1(a)$.
  If $c^b_{23}=0$ then $\al _1+\al _2+\al _3\in R^b$ by the previous
  paragraph. Then
  \[ R^a\ni \s^b_1(\al _1+\al _2+\al _3)
     =(-c^a_{12}-c^a_{13}-1)\al _1+\al _2+\al _3,
  \]
  and the coefficient of $\al _1$ is positive.
  Further, $\al _2+\al _3\in R^a$, and hence (2) holds in this case by
  Lemma~\ref{le:badroots}. Finally, if $c^b_{23}\not=0$, then $\al _2+\al
  _3\in R^b_+$, and hence $(-c^a_{12}-c^a_{13})\al _1+\al _2+\al _3\in R^a$.
  Since $-c^a_{12}-c^a_{13}>0$, (2) follows again from
  Lemma~\ref{le:badroots}.

  (3) If $c^a_{23}<0$, then $\al _2+\al _3\in R^a$ and
  $-k\al _1+\al _2+\al _3\notin R^a$ for all $k\in \ndN $.
  If $c^a_{23}=0$, then $\al _1+\al _2+\al _3\in R^a$ by (2), and
  $-k\al _1+\al _2+\al _3\notin R^a$ for all $k\in \ndN _0$.
  Applying the same argument to $R^{\rfl _1(a)}$ and using the reflection
  $\s^{\rfl _1(a)}_1$ and Lemma~\ref{le:badroots} gives the claim.

  (4) This follows immediately from (3).

  (5) The first case follows from Corollary~\ref{co:cij} and the second and
  third cases are obtained from (4) by interchanging the elements $1$ and $2$
  of $I$. We also obtain that if $k_0$ exists then
  $k_0\ge 2$ in all other cases. By \eqref{eq:Rbig} and
  Proposition~\ref{pr:R=Fseq} we conclude that $\al _1+\al _2\in R^a$. Then
  $c^a_{21}<0$ by Corollary~\ref{co:cij}, and hence
  we are left with calculating $k_0$ if
  $-1\le c^a_{23}\le 0$, $c^a_{21}+c^a_{23}=-2$, $c^{\rfl _2(a)}_{13}=0$, or
  $c^a_{21}=-1$, $c^a_{23}=0$. By (1), if $c^{\rfl _2(a)}_{13}=0$ then
  $c^{\rfl _2(a)}_{23}\not=0$, and hence $c^a_{23}<0$ by (C2).
  Thus we have to consider the elements $k\al _1+2\al _2+\al _3$, where
  $k\ge 2$, under the assumption that
  \begin{align}
    c^a_{21}=c^a_{23}=-1, \, c^{\rfl _2(a)}_{13}=0 \quad \text{or}\quad
    c^a_{21}=-1,\, c^a_{23}=0.
    \label{eq:ccond1}
  \end{align}
  Since $c^a_{21}=-1$, Condition~\eqref{eq:Rbig} gives that
  \[ c^{\rfl _2(a)}_{12}\le -2, \]
  see \cite[Lemma\,4.8]{a-CH09a}. Further,
  the first set of equations in \eqref{eq:ccond1}
  implies that $c^{\rfl _1\rfl _2(a)}_{13}=0$,
  and hence $c^{\rfl _1\rfl _2(a)}_{23}<0$ by (1). 
  Since $\s _2^a(2\al _1+2\al _2+\al _3)=2\al _1+\al _3-c^a_{23}\al
  _2$, the first set of equations in \eqref{eq:ccond1} and (4) imply that
  $k_0=2$. Similarly, Corollary~\ref{co:cij} tells that
  $k_0=2$ under the second set of conditions in \eqref{eq:ccond1}
  if and only if $c^{\rfl _2(a)}_{13}\le -2$.

  It remains to consider the situation for
  \begin{align}
    c^a_{21}=-1,\,c^a_{23}=0,\,c^{\rfl _2(a)}_{13}=-1.
    \label{eq:ccond2}
  \end{align}
  Indeed, equation $c^a_{23}=0$ implies that
  $c^{\rfl _2(a)}_{23}=0$ by (C2), and hence
  $c^{\rfl _2(a)}_{13}<0$ by (1),
  Assuming \eqref{eq:ccond2} we obtain that
  $\s _2^a(3\al _1+2\al _2+\al _3)=3\al _1+\al _2+\al _3$,
  and hence (3) implies that $k_0=3$ if and only if the corresponding
  conditions in (5) are valid.

  The rest follows by looking at $\sigma _1 \sigma _2^a(4\al _1+2\al _2+\al
  _3)$ and is left to the reader. The last claim holds since $c^a_{13}=0$
  implies that $c^a_{23}\not=0$ by (1).
  The assumption $\#(R^a_+\cap (\ndZ \al _1+\ndZ \al _2))\ge 5$
  is needed to exclude the case
  $c^a_{21}=-1$, $c^{\rfl _2(a)}_{12}=-2$, $c^{\rfl _1\rfl
  _2(a)}_{21}=-1$, where 
  $R^a_+\cap (\ndZ \al _1+\ndZ \al _2)=\{\al _2,\al _1+\al _2,
  2\al _1+\al _2,\al _1\}$, by using Proposition~\ref{pr:R=Fseq} and
  Corollary~\ref{co:cij}, see also the proof of
  \cite[Lemma\,4.8]{a-CH09a}.
\end{proof}

\begin{theor}\label{cartan_6}
Let $\cC$ be a Cartan scheme of rank three. Assume that $\rsC \re $
is a finite irreducible root system
of type $\cC$. Then all entries of the Cartan matrices of $\cC$ are greater or
equal to $-7$.
\end{theor}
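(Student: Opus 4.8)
The plan is to argue by contradiction, assuming that some entry satisfies $c^a_{ij}\le -8$. As the statement is invariant under permutations of $I$, I may assume the offending entry is $c^a_{23}$, so that $m:=-c^a_{23}\ge 8$; by Corollary~\ref{co:cij} the roots $\al _3+k\al _2$ then lie in $R^a$ for all $k$ with $0\le k\le m$, and by Lemma~\ref{le:someroots}(1) at most one of $c^a_{12}$, $c^a_{13}$ is zero.

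The engine of the argument is Lemma~\ref{le:someroots}, which describes explicitly the roots on the affine lines $k\al _1+\al _2+\al _3$ and $k\al _1+2\al _2+\al _3$. I would first turn its content into a recursion relating the two endpoints of such a chain at neighbouring objects. Indeed, applying $\s _1^a$ to $k\al _1+2\al _2+\al _3$ produces the root $(-k-2c^a_{12}-c^a_{13})\al _1+2\al _2+\al _3$ at $\rfl _1(a)$; so the reflection reverses the chain, and, keeping track of the transformed entries via (C2), it identifies the largest admissible $k$ at $a$ with the smallest admissible $k$ at $\rfl _1(a)$. Since Lemma~\ref{le:someroots}(5) bounds the smallest value by a constant (at most $4$), this yields an estimate of the form (length of the chain at $a$) $\ge 2(-c^a_{12})+(-c^a_{13})-4$, together with symmetric estimates with the roles of the indices exchanged and on the line of height one via Lemma~\ref{le:someroots}(3),(4). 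In particular, an entry $\le -8$ forces one of these chains to be long.

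It remains to produce an opposing upper bound, and this is where finiteness and convexity enter. I would feed the long chain into the convexity results: within the affine plane of roots whose $\al _3$-coordinate equals $1$, Lemma~\ref{le:root_diffs1} forces each non-extremal root $\gamma $ to have $\gamma -\al _1$ or $\gamma -\al _2$ again in $R^a$, while Theorem~\ref{root_diffs} together with Corollary~\ref{convex_diff2} forbids certain triples $\{\gamma _1,\gamma _2,\al \}$ of volume one from failing to be bases. A chain that is too long produces triples violating these constraints, or else drives the rank-two subsystem in the $\{1,2\}$-plane to a size incompatible with the $\cF $-sequence description of Proposition~\ref{pr:R=Fseq}. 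Because Lemma~\ref{le:root_diffs1} is only stated when $R^a\cap (\ndN _0\al _1+\ndN _0\al _2)$ has at most four positive roots, I expect to split the analysis according to $\#(R^a\cap (\ndN _0\al _1+\ndN _0\al _2))$: the small case is handled by the explicit convexity lemma, the large case directly by Proposition~\ref{pr:R=Fseq}.

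The hard part will be the bounded but intricate bookkeeping: one must follow the entries $c^b_{ij}$ through the short words in $\s _1,\s _2$ connecting the objects involved, invoke (C2) and the seven cases of Lemma~\ref{le:someroots}(5) at each step, and verify that the family of roots produced never collapses or leaves $R^a_+$. Arranging that each reflection strictly lengthens a chain while convexity caps its length is what squeezes the entries up to $-7$; extracting the exact value $-7$ rather than a weaker bound is the most delicate point, and is precisely where the detailed case distinctions of Lemma~\ref{le:someroots}(5) must be exploited to the full.
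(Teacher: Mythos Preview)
Your proposal correctly identifies Lemma~\ref{le:someroots} as the engine and correctly observes the recursion under $\s_1$, but the second half---producing an ``opposing upper bound'' from convexity---is a genuine gap. The results you cite (Lemma~\ref{le:root_diffs1}, Theorem~\ref{root_diffs}, Corollary~\ref{convex_diff2}) do not bound the length of the chain $\{k\al_1+2\al_2+\al_3:k\in\ndN_0\}\cap R^a$; they constrain how roots decompose, but they hold equally in the root systems of the classification where Cartan entries reach $-6$ and such chains are long. Nothing in your outline explains why $-8$ specifically forces a contradiction, and the appeal to Proposition~\ref{pr:R=Fseq} (``incompatible with the $\cF$-sequence description'') is not a bound: every finite rank-two subsystem is an $\cF$-sequence, of arbitrary length.

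The paper's actual mechanism is different and you miss it entirely. Rather than bounding the chain directly, one takes the \emph{smallest} $k_0$ with $\al:=k_0\al_1+2\al_2+\al_3\in R^a$, so that $\al-\al_1\notin R^a$; then $\{\al,\al_1\}$ is a base for its plane (Lemma~\ref{le:base2}), extends to a base $\{\al,\al_1,\gamma\}$ of $\ndZ^I$ (Corollary~\ref{simple_rkk}), and hence there is $w\in\Hom(a,d)$ sending $\al,\al_1,\gamma$ to simple roots $\al_{i_1},\al_{i_2},\al_{i_3}$. Now reflect at $\rfl_1(a)$: Lemma~\ref{le:someroots}(5) again gives a root $k_1\al_1+2\al_2+\al_3$ with $k_1\le 4$, and pushing it back via $\s_1$ produces a root in $R^a$ whose coefficient of $\al_1$ exceeds $k_0-c^a_{12}$ (the inequality $-k_1-2c^a_{12}-c^a_{13}-k_0>-c^a_{12}$ uses $k_0,k_1\le 4$ and the refinement $k_0\le 2$ when $c^a_{13}=0$). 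In the new coordinates at $d$ this root has the form $\al_{i_1}+m\al_{i_2}$ with $m>-c^a_{12}$, forcing $c^d_{i_2i_1}<c^a_{12}\le -8$. So an entry $\le -8$ manufactures a strictly smaller entry elsewhere; iterating contradicts finiteness of $A$ (and hence of the set of Cartan matrices). This descent through a base change is the missing idea.
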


\begin{proof}
  It can be assumed that $\cC $ is connected.
  We prove the theorem indirectly. To do so we may assume that $a\in A$ such
  that $c^a_{12}\le -8$. Then Proposition~\ref{pr:R=Fseq} implies that
  $\# (R^a_+\cap (\ndZ \al _1+\ndZ \al _2))\ge 5$.
  By Lemma~\ref{le:someroots} there exists $k_0\in \{0,1,2,3,4\}$ such that
  $\al :=k_0\al _1+2\al _2+\al _3\in R^a_+$ and
  $\al -\al _1\notin R^a$.
  By Lemma~\ref{le:base2} and the choice of $k_0$ the set $\{\al ,\al _1\}$
  is a base for $V^a(\al ,\al _1)$ at $a$. Corollary~\ref{simple_rkk}
  implies that there exists a root
  $\gamma \in R^a$ such that $\{\al ,\al _1,\gamma \}$ is a base for $\ndZ ^I$
  at $a$. Let $d\in A$, $w\in \Hom (a,d)$, and $i_1,i_2,i_3\in I$
  such that $w(\al )=\al _{i_1}$, $w(\al _1)=\al _{i_2}$,
  $w(\gamma )=\al _{i_3}$.

  Let $b=\rfl _1(a)$. Again by
  Lemma~\ref{le:someroots} there exists $k_1\in \{0,1,2,3,4\}$ such that
  $\beta :=k_1\al _1+2\al _2+\al _3\in R^b_+$.
  Thus
  \[ R^a_+\ni \s_1^b(\beta )=(-k_1-2c^a_{12}-c^a_{13})\al _1+2\al _2+\al
  _3. \]
  Further,
  \[ -k_1-2c^a_{12}-c^a_{13}-k_0>-c^a_{12} \]
  since $k_0\le 2$ if $c^a_{13}=0$. Hence $\al _{i_1}+(1-c^a_{12})\al _{i_2}
  \in R^d$, that is, $c^d_{i_2 i_1}<c^a_{1 2}\le -8$.
  We conclude that there exists
  no lower bound for the entries of the Cartan matrices of $\cC $,
  which is a contradiction to the finiteness of $\rsC \re (\cC )$.
  This proves the theorem.
\end{proof}

\begin{remar}
  The bound in the theorem is not sharp. After completing the classification
  one can check that the entries of the Cartan matrices of $\cC $ are always
  at least $-6$. The entry $-6$ appears for example in the Cartan scheme
  corresponding to the root system with number $53$, see
  Corollary~\ref{co:cij}.
\end{remar}

\begin{propo}\label{Euler_char}
Let $\cC $ be an irreducible connected simply connected Cartan scheme of rank three.
Assume that $\rsC \re (\cC )$ is a finite root system of type
$\cC $.
Let $e$ be the number of vertices, $k$ the number of edges, and $f$ the
number of ($2$-dimensional)
faces of the object change diagram of $\cC $. Then $e-k+f=2$.
\end{propo}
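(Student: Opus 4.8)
The plan is to realise the object change diagram $\Gamma$ as the $1$-skeleton of a closed surface whose $2$-cells are the Coxeter cycles determined by (R4), and then to use simply connectedness to identify that surface with a $2$-sphere, so that $e-k+f$ becomes its Euler characteristic. I would begin by showing that $\Gamma$ is cubic. Since $\cC$ is simply connected, $\Hom(a,a)=\{\id_a\}$; but $\s_i^a(\al _i)=\al _i-c_{ii}^a\al _i=-\al _i$ gives $\s_i^a\neq\id_a$, and as $\s_i^a\in\Hom(a,\rfl _i(a))$ this forces $\rfl _i(a)\neq a$ for all $i\in I$ and $a\in A$. Hence every vertex of $\Gamma$ is incident to exactly one edge of each label $1,2,3$, so $\Gamma$ is $3$-regular with a proper $3$-edge-colouring, and counting edge-endpoints gives $2k=3e$.

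Next I would build a $2$-dimensional CW-complex $X$ with $1$-skeleton $\Gamma$. For each pair $\{i,j\}\subset I$ the edges of labels $i$ and $j$ form a $2$-regular subgraph, hence a disjoint union of cycles (the ``$ij$-cycles''), of length $2m_{i,j}^a$ by (R4); I attach one $2$-cell along each. These $2$-cells are exactly the $f$ faces of $\Gamma$, so $\chi(X)=e-k+f$. I would then check that $X$ is a closed connected surface: an edge of label $i$ lies on a unique $ij$-cycle for each of the two $j\neq i$, hence on exactly two $2$-cells; and at a vertex $a$ the three incident edges together with the corners of the $12$-, $13$- and $23$-faces close up into a single hexagonal link $e_1\,f_{12}\,e_2\,f_{23}\,e_3\,f_{13}$, i.e. a circle. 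Connectedness of $X$ follows from that of $\Wg (\cC )$.

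The decisive step is to prove $X$ simply connected, using the Matsumoto-type presentation of $\Wg (\cC )$ from \cite{a-HeckYam08}. The fundamental groupoid of $X$ is the free groupoid on the edges of $\Gamma$ modulo the boundary relations of the attached $2$-cells. Sending the $i$-edge at $a$ to $\s_i^a$ (so that traversing an edge back and forth yields $\s_i^{\rfl _i(a)}\s_i^a=\id_a$), the $ij$-cell boundaries become precisely the Coxeter relations $(\s_i\s_j)^{m_{i,j}^a}=\id_a$. Since reflections together with these Coxeter relations present $\Wg (\cC )$, the fundamental groupoid of $X$ is isomorphic to $\Wg (\cC )$, so $\pi_1(X,a)\cong\Hom(a,a)=\{\id_a\}$. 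A connected closed simply connected surface is the $2$-sphere, whence $\chi(X)=2$ and $e-k+f=2$.

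I expect the main obstacle to be this last step: the isomorphism $\pi_1(X,a)\cong\Hom(a,a)$ rests on the Coxeter relations forming a \emph{complete} set of defining relations for $\Wg (\cC )$ and on carefully matching the combinatorial $2$-cells of $X$ with those algebraic relations. Orientability needs no separate argument, since a non-orientable closed surface has nontrivial fundamental group; and the manifold checks (two faces per edge, circular vertex links) are routine once the $2$-cells are set up as above.
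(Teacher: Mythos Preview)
Your proposal is correct and follows essentially the same approach as the paper: build a $2$-complex whose $1$-skeleton is the object change diagram and whose $2$-cells are the Coxeter cycles, verify it is a closed connected surface, invoke the Coxeter presentation of $\Wg(\cC)$ (the paper cites \cite[Thm.\,2.6]{a-CH09a}) to conclude simple connectedness, and hence identify the surface with $S^2$. The only cosmetic difference is that the paper first triangulates each face by diagonals so as to quote a simplicial result from \cite{b-tomDieck91}, whereas you work directly in the CW category; the substance is identical.
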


\begin{proof}
  Vertices of the object change diagram correspond to elements of $A$.
  Since $\cC $ is connected and $\rsC \re (\cC )$ is finite, the set $A$ is
  finite.
  Consider the equivalence relation on $I\times A$, where $(i,a)$ is
  equivalent to $(j,b)$ for some $i,j\in I$ and $a,b\in A$,
  if and only if $i=j$ and $b\in \{a,\rfl _i(a)\}$. (This is also known as the
  pushout of $I\times A$ along the bijections $\id :I\times A\to I\times A$
  and $\rfl :I\times A\to I\times A$, $(i,a)\mapsto (i,\rfl _i(a))$.)
  Since $\cC $ is simply
  connected, $\rfl _i(a)\not=a$ for all $i\in I$ and $a\in A$. Then
  edges of the object change diagram
  correspond to equivalence classes in $I\times A$.
  Faces of the object change diagram can be defined as equivalence classes of
  triples $(i,j,a)\in I\times I\times A\setminus
  \{(i,i,a)\,|\,i\in I,a\in A\}$, where $(i,j,a)$ and $(i',j',b)$ are
  equivalent for some $i,j,i',j'\in I$ and $a,b\in A$ if and only if
  $\{i,j\}=\{i',j'\}$ and $b\in \{(\rfl _j\rfl _i)^m(a),
  \rfl _i(\rfl _j\rfl _i)^m(a)\,|\,m\in \ndN _0\}$. Since $\cC $ is simply
  connected, (R4) implies that the face corresponding to
  a triple $(i,j,a)$ is a polygon with $2m_{i,j}^a$ vertices.

For each face choose a
triangulation by non-intersecting diagonals.
Let $d$ be the total number of diagonals
arising this way.
Now consider the following two-dimensional simplicial complex $C$:
The $0$-simplices are the objects. The $1$-simplices are the edges and the
chosen diagonals of the faces of the
object change diagram.
The $2$-simplices are the $f+d$ triangles.
Clearly, each edge is contained in precisely two triangles.
By \cite[Ch.\,III, (3.3), 2,3]{b-tomDieck91}
the geometric realization $X$ of $C$ is a closed $2$-dimensional
surface without boundary. The space $X$ is connected and compact.

Any two morphisms of $\Wg (\cC )$ with same source and target are
equal because $\cC $ is simply connected.
By \cite[Thm.\,2.6]{a-CH09a} this equality follows from
the Coxeter relations. A Coxeter relation means for the object
change diagram that for the corresponding face and vertex the two paths
along the sides of the face towards the opposite vertex yield the same
morphism. Hence diagonals can be interpreted as representatives of paths in a
face between two vertices, and then all loops in $C$
become homotopic to the trivial loop.
Hence $X$ is simply connected
and therefore homeomorphic to a two-dimensional sphere by
\cite[Ch.\,III, Satz 6.9]{b-tomDieck91}.
Its Euler characteristic is $2=e-(k+d)+(f+d)=e-k+f$.
\end{proof}

\begin{remar} \label{re:planesandfaces}
Assume that $\cC $ is connected and simply connected, and let $a\in A$.
Then any pair of opposite $2$-dimensional faces of the object change diagram
can be interpreed as a plane in $\ndZ ^I$ containing at least
two positive roots $\al ,\beta \in R^a_+$.
Indeed, let $b\in A$ and $i_1,i_2\in I$ with $i_1\not=i_2$. Since $\cC $ is
connected and simply connected, there exists a unique $w\in \Hom (a,b)$. Then
$V^a(w^{-1}(\al _{i_1}),w^{-1}(\al _{i_2}))$ is a plane in $\ndZ ^I$
containing at least two positive roots. One can easily check that this plane
is independent of the choice of the representative of the face determined by
$(i_1,i_2,b)\in I\times I\times A$. Further, let $w_0\in \Hom (b,d)$, where
$d\in A$, be the longest element in $\Homsfrom b$. Let $j_1,j_2\in I$ such
that $w_0(\al _{i_n})=-\al _{j_n}$ for $n=1,2$. Then $(j_1,j_2,d)$ determines
the plane
\[ V^a( (w_0w)^{-1}(\al _{j_1}),(w_0w)^{-1}(\al _{j_2}))=
V^a(w^{-1}(\al _{i_1}),w^{-1}(\al _{i_2})). \]
This way we attached to any pair of ($2$-dimensional) opposite faces
of the object change diagram a plane containing at least two positive roots.
\begin{figure}
\caption{The object change diagram of the last root system of rank three}
\label{fig:37posroots}
\includegraphics[width=9cm,bb=0 50 520 550]{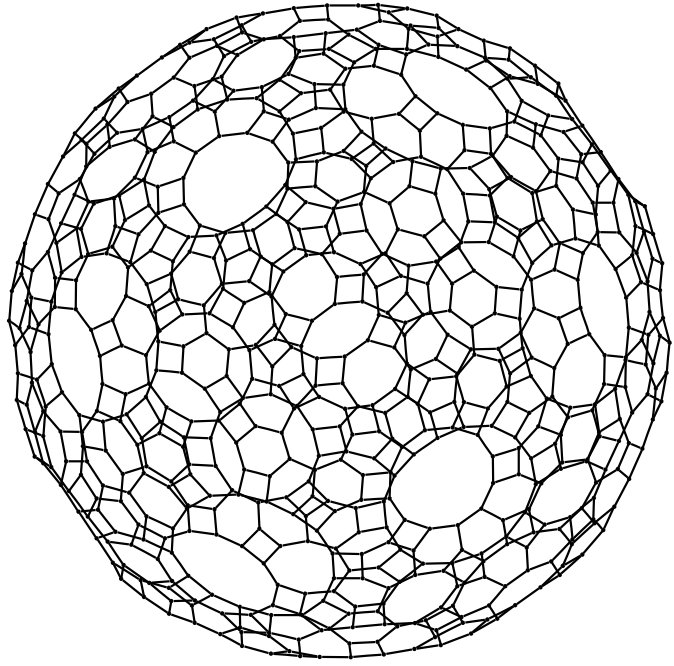}
\end{figure}
Let $<$ be a semigroup ordering on $\ndZ ^I$ such that $0<\gamma $ for
all $\gamma \in R^a_+$. Let $\al ,\beta \in R^a_+$ with $\al \not=\beta $, and
assume that $\al $ and $\beta $ are the smallest elements in $R^a_+\cap
V^a(\al ,\beta )$ with respect to $<$. Then $\{\al ,\beta \}$ is a base for
$V^a(\al ,\beta )$ at $a$ by Lemma~\ref{posrootssemigroup}. By
Corollary~\ref{simple_rkk} there exists $b\in A$ and $w\in \Hom (a,b)$ such
that $w(\al ),w(\beta )\in R^b_+$ are simple roots.
Hence any plane in $\ndZ ^I$ containing at least two elements of $R^a_+$
can be obtained by the construction in the previous paragraph.

It remains to show that different pairs of opposite faces give rise to
different planes. This follows from the fact that for any $b\in A$ and
$i_1,i_2\in I$ with $i_1\not=i_2$ the conditions
\[ d\in A,\ u\in \Hom (b,d),\ j_1,j_2\in I,\
u(\al _{i_1})=\al _{j_1},\ u(\al _{i_2})=\al _{j_2}
\]
have precisely two solutions: $u=\id _b$ on the one side, and
$u=w_0w_{i_1i_2}$ on the other side, where
$w_{i_1i_2}=\cdots \s_{i_1}\s_{i_2}\s_{i_1}\id _b\in \Homsfrom b$
is the longest product of reflections $\s _{i_1}$, $\s _{i_2}$,
and $w_0$ is an appropriate longest element of $\Wg (\cC )$.
The latter follows from the fact that $u$ has to map the base $\{\al
_{i_1},\al _{i_2},\al _{i_3}\}$ for $\ndZ ^I$ at $b$,
where $I=\{i_1,i_2,i_3\}$, to another base, and any base consisting of two
simple roots can be extended precisely in two ways to a base of $\ndZ ^I$: by
adding the third simple root or by adding a uniquely determined negative root.

It follows from the construction and by \cite[Lemma~6.4]{a-CH09b}
that the faces corresponding to a plane
$V^a(\al ,\beta )$, where $\al ,\beta \in R^a_+$ with $\al \not=\beta $, have
as many edges as the cardinality of $V^a(\al ,\beta )\cap R^a$ (or twice the
cardinality of $V^a(\al ,\beta )\cap R^a_+$).
\end{remar}

\begin{theor}\label{sum_rank2}
Let $\cC$ be a connected simply connected
Cartan scheme of rank three. Assume that
$\rsC \re (\cC )$ is a finite irreducible root system of type $\cC$.
Let $a\in A$ and let $M$ be the set of planes containing at least two elements
of $R^a_+$. Then
\[ \sum_{V\in M} \#(V\cap R^a_+) = 3(\# M-1). \]
\end{theor}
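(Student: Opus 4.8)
The plan is to read the identity off Euler's formula for the object change diagram, Proposition~\ref{Euler_char}, by translating each of its three summands into root‑theoretic data. Write $e$, $k$, $f$ for the number of vertices, edges, and $2$-dimensional faces of the object change diagram, so that $e-k+f=2$. I will establish the three relations $f=2\#M$, $3e=2k$, and $\sum_{V\in M}\#(V\cap R^a_+)=k/2$, and then combine them.

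First I would record the bijection between $M$ and the set of opposite pairs of faces. By Remark~\ref{re:planesandfaces}, every pair of opposite $2$-dimensional faces determines a plane in $\ndZ^I$ containing at least two positive roots, distinct pairs determine distinct planes, and (via Lemma~\ref{posrootssemigroup} and Corollary~\ref{simple_rkk}) every such plane arises this way. Since the faces come in opposite pairs, this gives $f=2\#M$. The same remark tells me that the two faces attached to a plane $V$ each have $2\#(V\cap R^a_+)$ edges.

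Next I would count edge–face incidences. Because the rank is three, an edge carrying the label $i\in I$ lies on exactly the two faces whose index set is $\{i,j\}$ with $j\in I\setminus\{i\}$; hence each edge lies on precisely two faces. Summing the number of edges over all faces therefore counts each edge twice and equals $2k$. Grouping the faces into the opposite pairs indexed by $M$, and using that each of the two faces attached to $V$ has $2\#(V\cap R^a_+)$ edges, this sum also equals $\sum_{V\in M}4\#(V\cap R^a_+)$. Thus $2k=4\sum_{V\in M}\#(V\cap R^a_+)$, i.e.\ $\sum_{V\in M}\#(V\cap R^a_+)=k/2$. Finally, since $\cC$ is simply connected we have $\rfl_i(a)\neq a$ for all $i$ and $a$, so every vertex meets exactly the three edges labelled $1,2,3$; the handshake identity then gives $3e=2k$.

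It remains to combine these with $e-k+f=2$. Substituting $e=2k/3$ and $f=2\#M$ yields $2k/3-k+2\#M=2$, hence $k=6(\#M-1)$, and therefore $\sum_{V\in M}\#(V\cap R^a_+)=k/2=3(\#M-1)$, as claimed. I expect the only genuinely delicate points to be the bijection $f=2\#M$ and the edge count per face, both of which are supplied by Remark~\ref{re:planesandfaces}; the $3$-regularity and the ``each edge lies on two faces'' step are where simple connectedness and the rank being three are used, and they are elementary.
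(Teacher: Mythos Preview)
Your proof is correct and follows essentially the same approach as the paper: both use Proposition~\ref{Euler_char}, the identification $f=2\#M$ and the edge count per face from Remark~\ref{re:planesandfaces}, the $3$-regularity relation $3e=2k$, and an edge--face incidence count. The only difference is the order in which the equations are combined (you solve for $k$ at the end, the paper derives $k=3f-6$ first), which is immaterial.
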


\begin{proof}
  Let $e,k,f$ be as in Proposition~\ref{Euler_char}.
  Then $\#M=f/2$ by Remark~\ref{re:planesandfaces}.
  For any vertex $b\in A$ there are three edges starting at $b$,
  and any edge is bounded by two vertices. Hence counting vertices in two
  different ways one obtains that $3e=2k$.
  Proposition~\ref{Euler_char} gives that $e-k+f=2$. Hence
  $2k = 3e = 3(2-f+k)$, that is, $k=3f-6$.

  Any plane $V$ corresponds to a face which is a polygon consisting of
  $2\# (V\cap R^a_+)$ edges, see Remark~\ref{re:planesandfaces}.
  Summing up the edges
  twice over all planes (that is summing up over all faces of the
  object change diagram), each edge is counted twice.
  Hence
  \[ 2 \sum_{V\in M} 2\#(V\cap R^a_+) = 2k = 2(3f-6), \]
  which is the formula claimed in the theorem.
\end{proof}

\begin{corol}\label{ex_square}
Let $\cC$ be a connected simply connected Cartan scheme of rank three.
Assume that $\rsC \re (\cC )$ is a finite irreducible root system of type $\cC$.
Then there exists an object $a\in A$ and
$\alpha,\beta,\gamma\in R^a_+$ such that
$\{\alpha,\beta,\gamma\}=\{\al_1,\al_2,\al_3\}$ and
\begin{equation}\label{square_hexagon}
\#(V^a(\alpha,\beta)\cap R^a_+)=2, \quad
\#(V^a(\alpha,\gamma)\cap R^a_+)=3.
\end{equation}
Further $\alpha+\gamma, \beta +\gamma , \alpha+\beta+\gamma\in R^a_+$.
\end{corol}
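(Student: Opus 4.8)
The plan is to produce a single object $a$ at which two of the three ``coordinate planes'' $V^a(\al_i,\al_j)$ are respectively a square and a hexagon, and then read off the remaining roots. Recall that for simple roots $\al_i,\al_j$ the number $\#(V^a(\al_i,\al_j)\cap R^a_+)$ is the number of positive roots of the rank two system they span; by Corollary~\ref{co:cij} and Proposition~\ref{pr:R=Fseq} this equals $2$ exactly when $c^a_{ij}=0$ (a \emph{square}) and equals $3$ exactly when $c^a_{ij}=c^a_{ji}=-1$ (a \emph{hexagon}). So I must find $a$ and a labelling $\{\al,\beta,\gamma\}=\{\al_1,\al_2,\al_3\}$ with $c^a_{\al\beta}=0$ and with the $\{\al,\gamma\}$ pair of type $A_2$. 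Once this is achieved, the final assertion is automatic: $c^a_{\al\beta}=0$ forces $c^a_{\beta\gamma}\neq 0$ by Lemma~\ref{le:someroots}(1), hence $c^a_{\beta\gamma}<0$ and $\beta+\gamma\in R^a_+$ by Corollary~\ref{co:cij}; likewise $\al+\gamma\in R^a_+$ from the hexagon; and $\al+\beta+\gamma\in R^a_+$ by Lemma~\ref{le:someroots}(2).

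First I would check that a square exists at all, using Theorem~\ref{sum_rank2}: if every $V\in M$ satisfied $\#(V\cap R^a_+)\ge 3$ then $\sum_{V\in M}\#(V\cap R^a_+)\ge 3\#M$, contradicting the identity $\sum_{V\in M}\#(V\cap R^a_+)=3(\#M-1)$. Hence some $V\in M$ is a square.

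The hard part will be upgrading ``a square exists'' to ``a square and a hexagon share an object''. I would argue in the object change diagram, which by Proposition~\ref{Euler_char} and Remark~\ref{re:planesandfaces} is a trivalent polygonal decomposition of the sphere (each object has exactly three incident edges since $\rfl_i(a)\neq a$) in which a plane with $n$ positive roots appears as two opposite $2n$-gons; thus squares are quadrilateral faces, hexagons are hexagonal faces, and two coordinate planes meeting at a common object correspond to two faces sharing an edge. Writing $s_m$ for the number of $2m$-gonal faces, the relations $3e=2k$, $\sum_f(\text{sides})=2k$ and $e-k+f=2$ give $\sum_m m\,s_m=3\sum_m s_m-6$. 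Two purely local facts from Lemma~\ref{le:someroots}(1) then drive the argument: no two squares share an edge (the common endpoint would have two vanishing off-diagonal Cartan entries), and at each vertex of a non-square $\{i,j\}$-face, where $c^a_{ij}\neq 0$, at most one of the two incident face-edges can border a square. Assume for contradiction that no square is adjacent to a hexagon. Then every edge of every square borders a face with $m\ge 4$; since each such face borders at most $m$ square-edges by the second local fact, summing edge--square incidences gives $4s_2\le\sum_{m\ge 4}m\,s_m$. Combining this with the trivial bound $\sum_{m\ge 4}m\,s_m\ge 4\sum_{m\ge 4}s_m$ and the Euler identity above yields the incompatible inequalities $s_2+2\le\sum_{m\ge 4}s_m\le s_2-6$, the desired contradiction.

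Consequently some quadrilateral face shares an edge $e$ with a hexagonal face. Taking $a$ to be an endpoint of $e$ and labelling the simple roots so that $\al,\beta$ span the square and $\al,\gamma$ span the hexagon, with $\al$ the label of $e$, I obtain \eqref{square_hexagon}, after which the remaining roots follow as in the first paragraph. The only subtlety beyond the counting is the bookkeeping of which face corresponds to which pair of simple roots at a given object, which is exactly the content of Remark~\ref{re:planesandfaces}.
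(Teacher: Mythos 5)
Your proof is correct and follows essentially the same route as the paper: the Euler-characteristic identity of Theorem~\ref{sum_rank2} on the object change diagram, Lemma~\ref{le:someroots}(1) to forbid two squares (and, at each vertex of a face, two square-neighbours) from sharing an edge, and a double count of square-edges against large faces to force a square--hexagon adjacency. Your version in fact makes explicit the edge--vertex incidence count that the paper dismisses with ``one checks'', and replaces the paper's ``average face has more than $6$ edges'' conclusion with the equivalent pair of incompatible inequalities $s_2+2\le\sum_{m\ge 4}s_m\le s_2-6$; the endgame via Lemma~\ref{le:someroots}(1),(2) and Corollary~\ref{co:cij} is identical.
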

\begin{proof}
Let $M$ be as in Thm.~\ref{sum_rank2}. Let $a$ be any object and
assume $\#(V\cap R^a_+)>2$ for all $V\in M$, then
$\sum_{V\in M} \#(V\cap R^a_+) \ge 3\# M$ contradicting Thm.~\ref{sum_rank2}.
Hence for all objects $a$ there exists a plane $V$ with $\#(V\cap R^a_+)=2$.
Now consider the object change diagram and count the number of faces:
Let $2q_i$ be the number of faces with $2i$ edges. Then Thm.~\ref{sum_rank2}
translates to
\begin{equation}\label{thm_trans}
\sum_{i\ge 2} i q_i = -3+3\sum_{i\ge 2} q_i.
\end{equation}
Assume that there exists no object adjacent to a square and a hexagon.
Since $\rsC \re (\cC )$ is irreducible, no two squares have a common edge, see
Lemma~\ref{le:someroots}(1).
Look at the edges ending in vertices of squares, and count each edge once for
both polygons adjacent to it. One checks that there are
at least twice as many
edges adjacent to a polygon with at least $8$ vertices as edges
of squares.
This gives that
\[ \sum_{i\ge 4} 2i \cdot 2q_i \ge 2\cdot 4\cdot 2q_2. \]
By Equation~\eqref{thm_trans} we then have
$-3+3\sum_{i\ge 2}q_i\ge 4q_2+2q_2+3q_3$, that is, $q_2 < \sum_{i\ge 4}q_i$.
But then in average each face has more than $6$ edges which
contradicts Thm.~\ref{sum_rank2}.
Hence there is an object $a$ such that there exist
$\alpha,\beta,\gamma\in R^a_+$
as above satisfying Equation~\eqref{square_hexagon}.
We have
$\alpha+\gamma, \beta +\gamma , \alpha+\beta+\gamma\in R^a_+$
by Lemma~\ref{le:someroots}(1),(2) and Corollary~\ref{co:cij}.
\end{proof}

\section{The classification}
\label{sec:class}

In this section we explain the classification of connected simply connected
Cartan schemes of rank three such that $\rsC \re (\cC )$ is a finite
irreducible root system.
We formulate the main result in Theorem~\ref{th:class}.
The proof of Theorem~\ref{th:class} is performed using computer calculations
based on results of the previous sections.
Our algorithm described below is sufficiently powerful: The
implementation in $C$ terminates within a few hours on a usual computer.
Removing any of the theorems,
the calculations would take at least several weeks.

\begin{theor}
  (1) Let $\cC$ be a connected Cartan scheme of rank three with $I=\{1,2,3\}$.
  Assume that $\rsC \re (\cC )$ is a finite
  irreducible root system of type $\cC$. Then there exists an object $a\in A$
  and a linear map $\tau \in \Aut (\ndZ ^I)$
  such that
  $\tau (\al _i)\in \{\al _1,\al _2,\al _3\}$ for all $i\in I$ and
  $\tau (R^a_+)$ is one of the sets listed in
  Appendix~\ref{ap:rs}. Moreover, $\tau (R^a_+)$ with this property is
  uniquely determined.

  (2) Let $R$ be one of the $55$ subsets of $\ndZ ^3$ appearing in
  Appendix~\ref{ap:rs}. There exists up to equivalence a unique
  connected simply connected
  Cartan scheme $\cC (I,A,(\rfl _i)_{i\in I},(C^a)_{a\in A})$
  such that $R\cup -R$ is the set of real roots $R^a$ in an object $a\in A$. 
  Moreover $\rsC \re (\cC )$ is a finite irreducible root system of type $\cC
  $.
  \label{th:class}
\end{theor}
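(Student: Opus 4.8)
The plan is to reduce both statements to a terminating tree search whose correctness rests entirely on the obstructions of Sections~\ref{gen_res} and~\ref{rk3_obst}. The guiding principle is that a finite irreducible root system at a single object $a$ already determines the whole Cartan scheme, so the classification amounts to enumerating the admissible sets $R^a_+\subset \ndN _0^3$. First I would fix a canonical starting point: by Corollary~\ref{ex_square} I may relabel the simple roots so that $\#(V^a(\al _1,\al _2)\cap R^a_+)=2$ (a square) and $\#(V^a(\al _1,\al _3)\cap R^a_+)=3$ (a hexagon), with $\al _1+\al _3,\al _2+\al _3,\al _1+\al _2+\al _3\in R^a_+$, which pins down part of the seed. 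Next I would set up a reconstruction engine: given any finite candidate $R\subset \ndN _0^3$ obeying (R1) and (R2), read off the Cartan matrix $C^a$ from $R$ via Corollary~\ref{co:cij} (so $-c^a_{ij}$ is the largest $k$ with $\al _j+k\al _i\in R$), form the reflections $\s _i^a$ by~\eqref{eq:sia}, and define the root set at $\rfl _i(a)$ as the positive part of $\s _i^a(R\cup -R)$. Iterating the reflections generates an orbit of objects which closes after finitely many steps whenever the intended root system is finite, producing a finite $A$ together with Cartan matrices and root sets; one then verifies (R1)--(R4), simple connectedness, and indecomposability of the $C^a$ (equivalent to irreducibility, as recalled in Section~\ref{gen_res}). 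This engine both tests a candidate and, when successful, outputs the full scheme.

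The heart of the argument is the enumeration of admissible $R^a_+$. Starting from the seed, I would grow $R^a_+$ by a depth-first search that decides, lattice point by lattice point, whether a given vector is a root, propagating the obstructions at each node. The three coordinate planes carry rank two subsystems, so by Proposition~\ref{pr:R=Fseq} each is an $\cF $-sequence, determined once its first two roots (equivalently the relevant Cartan entries) are fixed; by Theorem~\ref{cartan_6} those entries lie in $\{-7,\dots,0\}$, leaving only finitely many choices. For the interior points the convexity results do the decisive work: Corollary~\ref{convex_diff2} and Proposition~\ref{pr:suminR} force, for suitable triples of roots, a prescribed sum or a prescribed difference to be a root, while Lemma~\ref{le:root_diffs1} forces the intermediate lattice points on a segment to be roots. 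Each rule either determines a membership outright or yields a finite branch, and any violation of an axiom or of Theorem~\ref{root_diffs} and Lemma~\ref{le:root_diffs2} prunes the branch. A branch is recorded as a solution exactly when no undetermined point remains and the reconstruction engine confirms a valid finite scheme.

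The search tree is finitely branching, since the Cartan entries are bounded below by $-7$ and the convexity rules leave only finitely many options at each node. The delicate point, and the step I expect to be the main obstacle, is to guarantee that no branch descends forever, i.e.\ that the obstructions are strong enough to force every partial configuration either into a contradiction or into closure within boundedly many steps. This is precisely why Theorems~\ref{cartan_6} and~\ref{root_diffs}, Corollary~\ref{convex_diff2}, and Proposition~\ref{pr:suminR} are all needed: dropping any one of them leaves lattice points whose membership cannot be decided locally, and the search no longer provably terminates within reasonable time. Granting termination, the tree yields a finite list of finite root systems.

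To pass to equivalence classes I would use Corollary~\ref{simple_rkk}: the bases of $\ndZ ^I$ at the objects are exactly the $\Wg (\cC )$-images of the standard basis up to a permutation, so the scheme is recorded faithfully by the set $\tau (R^a_+)$ for a simple-root permutation $\tau $. Canonicalizing each solution, say as the lexicographically least $\tau (R^b_+)$ over all objects $b$, all $w\in \Homsfrom a$, and all admissible $\tau $, and collecting the distinct canonical forms, should give exactly the $55$ sets of Appendix~\ref{ap:rs}; this establishes existence and the asserted uniqueness in~(1). For~(2), applying the reconstruction engine to each listed $R$ produces a connected simply connected scheme realizing $R\cup -R$ as some $R^a$, and this scheme is unique up to equivalence because the reflections, objects, and Cartan matrices are all forced by $R$, with simple connectedness being the canonical (universal) choice among connected schemes carrying the given root system.
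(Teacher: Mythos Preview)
Your overall architecture matches the paper's: reduce to enumerating admissible $R^a_+$ by a depth-first computer search seeded via Corollary~\ref{ex_square}, pruned by the obstructions of Sections~\ref{gen_res}--\ref{rk3_obst}, then verify and canonicalize each candidate. Two load-bearing ingredients from the paper are, however, absent from your plan, and without them the search is not well-posed.

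First, you never invoke Theorem~\ref{root_is_sum}. The paper uses it as the \emph{generating principle}: fix the lexicographic order with $\al_3<\al_2<\al_1$ and, at each step, append a root that is a sum of two already-known positive roots and is lexicographically larger than all current roots. This is what makes ``lattice point by lattice point'' finite at each node: the only candidates are pairwise sums of the current set, not arbitrary points of $\ndN_0^3$. Your description branches over ``interior points'' without saying which finite set of points is under consideration; bounding the coordinate-plane Cartan entries by Theorem~\ref{cartan_6} does not by itself bound the height in the third direction, so finite branching is not established. Relatedly, the paper tracks \emph{all} planes containing at least two current roots (not just the three coordinate planes) and maintains their $\cF$-sequences via Proposition~\ref{pr:R=Fseq}; that bookkeeping is what lets Corollary~\ref{convex_diff2} and Proposition~\ref{pr:suminR} actually bite.

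Second, you have no completion criterion. The paper detects a potential solution precisely when the Euler-characteristic identity of Theorem~\ref{sum_rank2},
\[
\sum_{V\in M}\#(V\cap R^a_+)=3(\#M-1),
\]
holds for the current plane data, outputs $R$, and then \emph{continues} the same branch (since larger root systems may still occur). Your rule ``no undetermined point remains'' is not something the obstructions ever certify: Corollary~\ref{convex_diff2} and Proposition~\ref{pr:suminR} force certain vectors \emph{in}, Theorem~\ref{cartan_6} and the $\cF$-sequence constraint rule some vectors \emph{out}, but nothing in your list tells you a partially built $R$ is already all of $R^a_+$. Without Theorem~\ref{sum_rank2} the algorithm has no halting/output test, and the claim that the obstructions ``force every partial configuration \dots\ into closure within boundedly many steps'' is unsupported.

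Once you add Theorem~\ref{root_is_sum} as the candidate generator and Theorem~\ref{sum_rank2} as the output test, your sketch coincides with the paper's Algorithms~4.2--4.5; your reconstruction/verification engine and lexicographic canonicalization already match the paper's.
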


Let $<$ be the lexicographic ordering on $\ndZ ^3$ such
that $\al_3<\al_2<\al_1$.
Then $\al >0$ for any $\al \in \ndN _0^3\setminus \{0\}$.

Let $\cC $ be a connected Cartan scheme with $I=\{1,2,3\}$.
Assume that $\rsC \re (\cC )$ is a
finite irreducible root system of type $\cC $.
Let $a\in A$.
By Theorem~\ref{root_is_sum} we may construct $R^a_+$ inductively
by starting with $R^a_+=\{\al _3,\al _2,\al _1\}$, and
appending in each step a sum of a pair of positive roots which is greater
than all roots in $R^a_+$ we already have.
During this process, we keep track of all planes containing at least two
positive roots, and the positive roots on them.
Lemma~\ref{posrootssemigroup} implies that for any known root $\al $ and new
root $\beta $ either $V^a(\al ,\beta )$ contains no other known positive
roots,
or $\beta $ is not part of the unique base for $V^a(\al ,\beta )$ at $a$
consisting of positive roots.
In the first case the roots $\al ,\beta $ generate a new plane. It can happen
that $\Vol _2(\al ,\beta )>1$, and then $\{\al ,\beta \}$ is not a base
for $V^a(\al ,\beta )$ at $a$,
but we don't care about that.
In the second case the known roots in $V^a(\al ,\beta )\cap R^a_+$
together with $\beta $ have to form an $\cF$-sequence by
Proposition~\ref{pr:R=Fseq}.

Sometimes, by some theorem (see the details below)
we know that it is not possible to add more
positive roots to a plane. Then we can mark it as ``finished''.

Remark that to obtain a finite number of root systems as output,
we have to ensure that we compute only irreducible systems since there are
infinitely many inequivalent reducible root systems of rank two.
Hence starting with $\{\al _3,\al _2,\al _1\}$ will not work.
However, by Corollary~\ref{ex_square}, starting
with $\{\al _3,\al _2,\al_2+\al_3,\al _1,\al_1+\al_2,\al_1+\al_2+\al_3\}$
will still yield at least one root system for each desired Cartan scheme
(notice that any roots one would want to add are lexicographically greater).

In this section, we will call {\it root system fragment} (or {\it rsf}) the
following set of data associated to a set of positive roots $R$
in construction:
\begin{itemize}
\item normal vectors for the planes with at least two positive roots
\item labels of positive roots on these planes
\item Cartan entries corresponding to the root systems of the planes
\item an array of flags for finished planes
\item the sum $s_R$ of $\#(V\cap R)$ over all planes $V$ with at least two positive
  roots, see Theorem \ref{sum_rank2}
\item for each root $r\in R$ the list of planes it belongs to.
\end{itemize}
These data can be obtained directly from $R$, but the calculation is faster
if we continuously update them.

We divide the algorithm into three parts.
The main part is Algorithm~4.4, see below.

The first part updates a root system fragment to a new root and uses
Theorems \ref{root_diffs} and \ref{cartan_6} to possibly refuse doing so:

\algo{AppendRoot}{$\alpha$,$B$,$\tilde B$,$\hat\alpha$}
{Append a root to an rsf}
{a root $\alpha$, an rsf $B$, an empty rsf $\tilde B$, a root $\hat\alpha$.}
{$\begin{cases}
  0: & \mbox{if } \alpha \mbox{ may be appended, new rsf is then in } \tilde B, \\
  1: & \mbox{if } \alpha \mbox{ may not be appended}, \\
  2: & \mbox{if $\alpha \in R^a_+$ implies the existence of
  $\beta \in R^a_+$}\\
  & \mbox{with $\hat\alpha<\beta <\alpha $.}
\end{cases}$}
{
\item Let $r$ be the number of planes containing at least two elements of $R$.
  For documentation purposes let $V_1,\dots,V_r$ denote these planes.
  For any $i\in \{1,\dots,r\}$ let $v_i$ be a normal vector for $V_i$,
  and let $R_i$ be the $\cF$-sequence of $V_i\cap R$. Set
  $i \leftarrow 1$, $g \leftarrow 1$, $c\leftarrow [\:]$,
  $p \leftarrow [\:]$, $d\leftarrow\{\:\}$. During the algorithm $c$ will be
  an ordered subset of $\{1,\dots,r\}$, $p$ a corresponding list of
  ``positions'', and $d$ a subset of $R$.
\item \label{A1_2} If $i\le r$ and $g\ne 0$, then compute the scalar product
  $g:=(\alpha , v_i)$. (Then $g=\det (\al ,\gamma _1,\gamma _2)
  =\pm \Vol _3(\al ,\gamma _1,\gamma _2)$, where $\{\gamma _1,\gamma _2\}$
  is the basis of $V_i$
  consisting of positive roots.) Otherwise go to Step \ref{A1_6}.
\item \label{A1_3} If $g=0$ then do the following:
        If $V_i$ is not finished yet, then check if $\al $ extends $R_i$ to a
        new $\cF $-sequence.
        If yes, add the roots of $R_i$ to $d$,
        append $i$ to $c$, append the position of the insertion of $\al $ in
        $R_i$ to $p$,
        let $g \leftarrow 1$, and go to Step~5.
\item If $g^2=1$, then use Corollary~\ref{convex_diff2}:
      Let $\gamma_1$ and $\gamma_2$ be the beginning and the end of the $\cF
      $-sequence $R_i$, respectively. (Then $\{\gamma _1,\gamma _2\}$ is a
      base for $V_i$ at $a$). Let
      $\delta_1 \leftarrow \alpha - \gamma_1$,
      $\delta_2 \leftarrow \alpha - \gamma_2$.
      If $\delta_1,\delta _2\notin R$, then return $1$ if $\delta
      _1,\delta _2 \le \hat \al $ and return $2$ otherwise.
\item Set $i \leftarrow i+1$ and go to Step \ref{A1_2}.
\item \label{A1_6}
    If there is no objection appending $\alpha$ so far, i.e.~$g \ne 0$, then
    copy $B$ to $\tilde B$ and include $\alpha$ into $\tilde B$:
    use $c,p$ to extend existing $\cF $-sequences, and use (the complement of)
    $d$ to create new planes.
    Finally, apply Theorem \ref{cartan_6}:
    If there is a Cartan entry lesser than $-7$
    then return 1, else return 0.
    If $g=0$ then return 2.
}

The second part looks for small roots which we must include
in any case. The function is based on Proposition~\ref{pr:suminR}.
This is a strong restriction during the process.

\algo{RequiredRoot}{$R$,$B$,$\hat\alpha$}
{Find a smallest required root under the assumption that all roots $\le \hat
\al $ are known}
{$R$ a set of roots, $B$ an rsf for $R$, $\hat \alpha $ a root.}
{$\begin{cases}
  0 & \mbox{if we cannot determine such a root}, \\
  1,\varepsilon & \mbox{if we have found a small missing root $\varepsilon $
  with $\varepsilon >\hat \alpha $},\\
  2 & \mbox{if the given configuration is impossible}.
\end{cases}$}
{
\item Initialize the return value $f \leftarrow 0$.
\item \label{A2_0}
  We use the same notation as in Algo.~4.2, step 1.
  For all $\gamma _1$ in $R$ and all
  $(j,k)\in \{1,\dots,r\}\times \{1,\dots,r\}$
  such that $j\not=k$, $\gamma _1\in R_j\cap R_k$, and both $R_j,R_k$
  contain two elements,
  let $\gamma _2,\gamma _3\in R$ such that $R_j=\{\gamma _1,\gamma _2\}$,
  $R_k=\{\gamma _1,\gamma _3\}$.
  If $\Vol _3(\gamma _1,\gamma _2,\gamma _3) = 1$,
  then do Steps \ref{A2_a} to \ref{A2_b}.
\item \label{A2_a}
$\xi_2 \leftarrow \gamma_1+\gamma_2$,
$\xi_3 \leftarrow \gamma_1+\gamma_3$.
\item If $\hat\alpha \ge \xi_2$:
      If $\hat\alpha \ge \xi_3$ or plane $V_k$ is already finished,
      then return 2.
        If $f=0$ or $\varepsilon > \xi_3$, then
          $\varepsilon \leftarrow \xi_3$, $f\leftarrow 1$.
          Go to Step \ref{A2_0} and continue loop.
\item If $\hat\alpha \ge \xi_3$:
        If plane $V_j$ is already finished, then return 2.
        If $f=0$ or $\varepsilon > \xi_2$, then
          $\varepsilon \leftarrow \xi_2$, $f\leftarrow 1$.
\item \label{A2_b} Go to Step \ref{A2_0} and continue loop.
\item Return $f,\varepsilon$.
}

Finally, we resursively add roots to a set, update the rsf and
include required roots:

\algo{CompleteRootSystem}{$R$,$B$,$\hat\alpha$,$u$,$\beta$}
{\label{mainalg}Collects potential new roots, appends them and calls itself again}
{$R$ a set of roots, $B$ an rsf for $R$,
$\hat\alpha$ a lower bound for new roots, $u$ a flag,
$\beta$ a vector which is necessarily a root if $u=$ True.}
{Root systems containing $R$.}
{\label{A3} 
\item Check Theorem \ref{sum_rank2}: If $s_R = 3(r-1)$, where $r$ is the
  number of planes containing at least two positive roots, then
  output $R$ (and continue). We have found a potential root system.
\item If we have no required root yet, i.e.~$u=$ False, then\\
$f,\varepsilon:=$RequiredRoot$(R,B,\hat\al )$.
If $f=1$, then we have found a required root; we call
CompleteRootSystem($R,B,\hat\alpha, True, \varepsilon$) and terminate.
If $f=2$, then terminate.
\item Potential new roots will be collected in $Y\leftarrow \{\:\}$;
$\tilde B$ will be the new rsf.
\item For all planes $V_i$ of $B$ which are not finished,
 do Steps \ref{A3_a} to \ref{A3_b}.
\item \label{A3_a} $\nu \leftarrow 0$.
\item For $\zeta$ in the set of roots that may be added to the
  plane $V_i$ such that $\zeta> \hat\alpha$, do the following:
\begin{itemize}
\item set $\nu \leftarrow \nu+1$.
\item If $\zeta \notin Y$, then $Y \leftarrow Y \cup \{\zeta\}$.
If moreover $u=$ False or $\beta > \zeta$, then
\begin{itemize}
\item $y \leftarrow$ AppendRoot($\zeta,B,\tilde B,\hat\alpha$);
\item if $y = 0$ then CompleteRootSystem($R\cup\{\zeta\},\tilde B,\zeta ,
  u, \beta$).
\item if $y = 1$ then $\nu \leftarrow \nu-1$.
\end{itemize}
\end{itemize}
\item \label{A3_b} If $\nu = 0$, then mark $V_i$ as finished in $\tilde B$.
\item if $u =$ True and AppendRoot($\beta,B,\tilde B,\hat\alpha$) = 0,
  then call
  \\CompleteRootSystem($R\cup\{\beta\},\tilde B,\beta, \textrm{False},
\beta$).\\
Terminate the function call.
}

Note that we only used necessary conditions for root systems,
so after the computation we still need to check which of the sets
are indeed root systems.
A short program in {\sc Magma} confirms that Algorithm 4.4 yields
only root systems, for instance using this algorithm:

\algo{RootSystemsForAllObjects}{$R$}
{Returns the root systems for all objects if $R=R^a_+$ determines a Cartan
scheme $\cC $ such that $\rsC \re (\cC )$ is an irreducible root system}
{$R$ the set of positive roots at one object.}
{the set of root systems at all objects, or $\{\}$ if $R$ does
not yield a Cartan scheme as desired.}
{\label{A4}
\item $N \leftarrow [R]$, $M \leftarrow \{\}$.
\item While $|N| > 0$, do steps \ref{begwhile} to \ref{endwhile}.
\item Let $F$ be the last element of $N$. Remove $F$ from $N$ and include it to $M$.\label{begwhile}
\item Let $C$ be the Cartan matrix of $F$. Compute the three simple reflections given by $C$.
\item For each simple reflection $s$, do:\label{endwhile}
\begin{itemize}
\item Compute $G:=\{s(v)\mid v\in F\}$. If an element of $G$ has positive and negative coefficients, then return $\{\}$. Otherwise mutliply the negative roots of $G$ by $-1$.
\item If $G\notin M$, then append $G$ to $N$.
\end{itemize}
\item Return $M$.}

We list all $55$ root systems in Appendix~\ref{ap:rs}.
It is also interesting to
summarize some of the invariants, which is done in Table~1.
Let ${\mathcal O}=\{R^a \mid a \in A\}$ denote the set
of different root systems. By identifying objects with the same root system
one obtains a quotient Cartan scheme of the simply connected Cartan scheme of
the classification. In the fifth column we give the automorphism group of one
(equivalently, any) object of this quotient.
The last column gives the multiplicities of planes;
for example $3^7$ means that there are $7$ different planes containing
precisely $3$ positive roots.

\begin{center}
\begin{longtable}{|l l l l l l|}
\hline
Nr. & $|R_+^a|$ & $|{\mathcal O}|$ & $|A|$ & $\Hom(a)$ & planes \\
\hline
\endhead
\hline
\endfoot
$1$ & $6$ & $1$ & $24$ & $A_3$ & $2^{3}, 3^{4}, $\\
$2$ & $7$ & $4$ & $32$ & $A_1\times A_1\times A_1$ & $2^{3}, 3^{6}, $\\
$3$ & $8$ & $5$ & $40$ & $B_2$ & $2^{4}, 3^{6}, 4^{1}, $\\
$4$ & $9$ & $1$ & $48$ & $B_3$ & $2^{6}, 3^{4}, 4^{3}, $\\
$5$ & $9$ & $1$ & $48$ & $B_3$ & $2^{6}, 3^{4}, 4^{3}, $\\
$6$ & $10$ & $5$ & $60$ & $A_1\times A_2$ & $2^{6}, 3^{7}, 4^{3}, $\\
$7$ & $10$ & $10$ & $60$ & $A_2$ & $2^{6}, 3^{7}, 4^{3}, $\\
$8$ & $11$ & $9$ & $72$ & $A_1\times A_1\times A_1$ & $2^{7}, 3^{8}, 4^{4}, $\\
$9$ & $12$ & $21$ & $84$ & $A_1\times A_1$ & $2^{8}, 3^{10}, 4^{3}, 5^{1}, $\\
$10$ & $12$ & $14$ & $84$ & $A_2$ & $2^{9}, 3^{7}, 4^{6}, $\\
$11$ & $13$ & $4$ & $96$ & $G_2\times A_1$ & $2^{9}, 3^{12}, 4^{3}, 6^{1}, $\\
$12$ & $13$ & $12$ & $96$ & $A_1\times A_1\times A_1$ & $2^{10}, 3^{10}, 4^{3}, 5^{2}, $\\
$13$ & $13$ & $2$ & $96$ & $B_3$ & $2^{12}, 3^{4}, 4^{9}, $\\
$14$ & $13$ & $2$ & $96$ & $B_3$ & $2^{12}, 3^{4}, 4^{9}, $\\
$15$ & $14$ & $56$ & $112$ & $A_1$ & $2^{11}, 3^{12}, 4^{4}, 5^{2}, $\\
$16$ & $15$ & $16$ & $128$ & $A_1\times A_1\times A_1$ & $2^{13}, 3^{12}, 4^{6}, 5^{2}, $\\
$17$ & $16$ & $36$ & $144$ & $A_1\times A_1$ & $2^{14}, 3^{15}, 4^{6}, 5^{1}, 6^{1}, $\\
$18$ & $16$ & $24$ & $144$ & $A_2$ & $2^{15}, 3^{13}, 4^{6}, 5^{3}, $\\
$19$ & $17$ & $10$ & $160$ & $B_2\times A_1$ & $2^{16}, 3^{16}, 4^{7}, 6^{2}, $\\
$20$ & $17$ & $10$ & $160$ & $B_2\times A_1$ & $2^{16}, 3^{16}, 4^{7}, 6^{2}, $\\
$21$ & $17$ & $10$ & $160$ & $B_2\times A_1$ & $2^{18}, 3^{12}, 4^{7}, 5^{4}, $\\
$22$ & $18$ & $30$ & $180$ & $A_2$ & $2^{18}, 3^{18}, 4^{6}, 5^{3}, 6^{1}, $\\
$23$ & $18$ & $90$ & $180$ & $A_1$ & $2^{19}, 3^{16}, 4^{6}, 5^{5}, $\\
$24$ & $19$ & $25$ & $200$ & $A_1\times A_1\times A_1$ & $2^{20}, 3^{20}, 4^{6}, 5^{4}, 6^{1}, $\\
$25$ & $19$ & $8$ & $192$ & $G_2\times A_1$ & $2^{21}, 3^{18}, 4^{6}, 6^{4}, $\\
$26$ & $19$ & $50$ & $200$ & $A_1\times A_1$ & $2^{20}, 3^{20}, 4^{6}, 5^{4}, 6^{1}, $\\
$27$ & $19$ & $25$ & $200$ & $A_1\times A_1\times A_1$ & $2^{20}, 3^{20}, 4^{6}, 5^{4}, 6^{1}, $\\
$28$ & $19$ & $8$ & $192$ & $G_2\times A_1$ & $2^{24}, 3^{12}, 4^{6}, 5^{6}, 6^{1}, $\\
$29$ & $20$ & $27$ & $216$ & $B_2$ & $2^{20}, 3^{26}, 4^{4}, 5^{4}, 8^{1}, $\\
$30$ & $20$ & $110$ & $220$ & $A_1$ & $2^{21}, 3^{24}, 4^{6}, 5^{4}, 7^{1}, $\\
$31$ & $20$ & $110$ & $220$ & $A_1$ & $2^{23}, 3^{20}, 4^{7}, 5^{5}, 6^{1}, $\\
$32$ & $21$ & $15$ & $240$ & $B_2\times A_1$ & $2^{22}, 3^{28}, 4^{6}, 5^{4}, 8^{1}, $\\
$33$ & $21$ & $30$ & $240$ & $A_1\times A_1\times A_1$ & $2^{26}, 3^{20}, 4^{9}, 5^{4}, 6^{2}, $\\
$34$ & $21$ & $5$ & $240$ & $B_3$ & $2^{24}, 3^{24}, 4^{9}, 6^{4}, $\\
$35$ & $22$ & $44$ & $264$ & $A_2$ & $2^{27}, 3^{25}, 4^{9}, 5^{3}, 6^{3}, $\\
$36$ & $25$ & $42$ & $336$ & $A_1\times A_1\times A_1$ & $2^{33}, 3^{34}, 4^{12}, 5^{2}, 6^{3}, 8^{1}, $\\
$37$ & $25$ & $14$ & $336$ & $G_2\times A_1$ & $2^{36}, 3^{30}, 4^{9}, 5^{6}, 6^{4}, $\\
$38$ & $25$ & $28$ & $336$ & $A_1\times A_2$ & $2^{36}, 3^{30}, 4^{9}, 5^{6}, 6^{4}, $\\
$39$ & $25$ & $7$ & $336$ & $B_3$ & $2^{36}, 3^{28}, 4^{15}, 6^{6}, $\\
$40$ & $26$ & $182$ & $364$ & $A_1$ & $2^{35}, 3^{39}, 4^{10}, 5^{4}, 6^{3}, 8^{1}, $\\
$41$ & $26$ & $182$ & $364$ & $A_1$ & $2^{37}, 3^{36}, 4^{9}, 5^{6}, 6^{3}, 7^{1}, $\\
$42$ & $27$ & $49$ & $392$ & $A_1\times A_1\times A_1$ & $2^{38}, 3^{42}, 4^{9}, 5^{6}, 6^{3}, 8^{1}, $\\
$43$ & $27$ & $98$ & $392$ & $A_1\times A_1$ & $2^{39}, 3^{40}, 4^{10}, 5^{6}, 6^{2}, 7^{2}, $\\
$44$ & $27$ & $98$ & $392$ & $A_1\times A_1$ & $2^{39}, 3^{40}, 4^{10}, 5^{6}, 6^{2}, 7^{2}, $\\
$45$ & $28$ & $420$ & $420$ & $1$ & $2^{41}, 3^{44}, 4^{11}, 5^{6}, 6^{2}, 7^{1}, 8^{1}, $\\
$46$ & $28$ & $210$ & $420$ & $A_1$ & $2^{42}, 3^{42}, 4^{12}, 5^{6}, 6^{1}, 7^{3}, $\\
$47$ & $28$ & $70$ & $420$ & $A_2$ & $2^{42}, 3^{42}, 4^{12}, 5^{6}, 6^{1}, 7^{3}, $\\
$48$ & $29$ & $56$ & $448$ & $A_1\times A_1\times A_1$ & $2^{44}, 3^{46}, 4^{13}, 5^{6}, 6^{2}, 8^{2}, $\\
$49$ & $29$ & $112$ & $448$ & $A_1\times A_1$ & $2^{45}, 3^{44}, 4^{14}, 5^{6}, 6^{1}, 7^{2}, 8^{1}, $\\
$50$ & $29$ & $112$ & $448$ & $A_1\times A_1$ & $2^{45}, 3^{44}, 4^{14}, 5^{6}, 6^{1}, 7^{2}, 8^{1}, $\\
$51$ & $30$ & $238$ & $476$ & $A_1$ & $2^{49}, 3^{44}, 4^{17}, 5^{6}, 6^{1}, 7^{1}, 8^{2}, $\\
$52$ & $31$ & $21$ & $504$ & $G_2\times A_1$ & $2^{54}, 3^{42}, 4^{21}, 5^{6}, 6^{1}, 8^{3}, $\\
$53$ & $31$ & $21$ & $504$ & $G_2\times A_1$ & $2^{54}, 3^{42}, 4^{21}, 5^{6}, 6^{1}, 8^{3}, $\\
$54$ & $34$ & $102$ & $612$ & $A_2$ & $2^{60}, 3^{63}, 4^{18}, 5^{6}, 6^{4}, 8^{3}, $\\
$55$ & $37$ & $15$ & $720$ & $B_3$ & $2^{72}, 3^{72}, 4^{24}, 6^{10}, 8^{3}, $
\end{longtable}
{\small Table 1: Invariants of irreducible root systems of rank three}
\end{center}

At first sight, one is tempted to look for a formula for the number of objects
in the universal covering depending on the number of roots. There is an
obvious one: consider the coefficients of $4/((1-x)^2(1-x^4))$. However, there
are exceptions, for example nr.~29 with $20$ positive roots
and $216$ objects (instead of $220$).

Rank 3 Nichols algebras of diagonal type with finite irreducible
arithmetic root system are classified in \cite[Table 2]{a-Heck05b}. In
Table~2 we identify the Weyl groupoids of these Nichols
algebras.

\vspace{\baselineskip}
\begin{center}
\begin{tabular}{|l | l l l l l l l l l |}
\hline
row in \cite[Table 2]{a-Heck05b}
& 1 & 2 & 3 & 4 & 5 & 6 & 7 & 8 & 9\\
Weyl groupoid &
1 & 5 & 4 & 1 & 5 & 3 & 11 & 1 & 2\\
\hline
row in \cite[Table 2]{a-Heck05b}
& 10 & 11 & 12 & 13 & 14 & 15 & 16 & 17 & 18 \\
Weyl groupoid &
2 & 2 & 5 & 13 & 5 & 6 & 7 & 8 & 14\\
\hline
\end{tabular}

\vspace{5pt}
{\small Table 2: Weyl groupoids of rank three Nichols algebras of
diagonal type}
\end{center}

\begin{appendix}
  \section{Irreducible root systems of rank three}
\label{ap:rs}
We give the roots in a multiplicative notation to save
space: The word $1^x2^y3^z$ corresponds to
$x\alpha_3+y\alpha_2+z\alpha_1$.

Notice that we have chosen a ``canonical'' object for each
groupoid. Write $\pi(R^a_+)$ for the set $R^a_+$ where the coordinates
are permuted via $\pi\in S_3$. Then the set listed below is the minimum
of $\{\pi(R^a_+)\mid a\in A,\:\: \pi\in S_3\}$ with respect
to the lexicographical ordering on the sorted sequences of roots.
\vspace{11pt}

\baselineskip=10pt

\begin{tiny}
\noindent
Nr. $1$ with $6$ positive roots:\\
$1$, $2$, $3$, $12$, $13$, $123$\\
Nr. $2$ with $7$ positive roots:\\
$1$, $2$, $3$, $12$, $13$, $23$, $123$\\
Nr. $3$ with $8$ positive roots:\\
$1$, $2$, $3$, $12$, $13$, $1^{2}2$, $123$, $1^{2}23$\\
Nr. $4$ with $9$ positive roots:\\
$1$, $2$, $3$, $12$, $13$, $1^{2}2$, $123$, $1^{2}23$, $1^{2}23^{2}$\\
Nr. $5$ with $9$ positive roots:\\
$1$, $2$, $3$, $12$, $23$, $1^{2}2$, $123$, $1^{2}23$, $1^{2}2^{2}3$\\
Nr. $6$ with $10$ positive roots:\\
$1$, $2$, $3$, $12$, $13$, $1^{2}2$, $1^{2}3$, $123$, $1^{2}23$, $1^{3}23$\\
Nr. $7$ with $10$ positive roots:\\
$1$, $2$, $3$, $12$, $13$, $23$, $1^{2}2$, $123$, $1^{2}23$, $1^{2}2^{2}3$\\
Nr. $8$ with $11$ positive roots:\\
$1$, $2$, $3$, $12$, $13$, $1^{2}2$, $1^{2}3$, $123$, $1^{2}23$, $1^{3}23$, $1^{3}2^{2}3$\\
Nr. $9$ with $12$ positive roots:\\
$1$, $2$, $3$, $12$, $13$, $1^{2}2$, $123$, $1^{3}2$, $1^{2}23$, $1^{3}23$, $1^{3}2^{2}3$, $1^{4}2^{2}3$\\
Nr. $10$ with $12$ positive roots:\\
$1$, $2$, $3$, $12$, $13$, $1^{2}2$, $1^{2}3$, $123$, $1^{2}23$, $1^{3}23$, $1^{2}2^{2}3$, $1^{3}2^{2}3$\\
Nr. $11$ with $13$ positive roots:\\
$1$, $2$, $3$, $12$, $13$, $1^{2}2$, $123$, $1^{3}2$, $1^{2}23$, $1^{3}2^{2}$, $1^{3}23$, $1^{3}2^{2}3$, $1^{4}2^{2}3$\\
Nr. $12$ with $13$ positive roots:\\
$1$, $2$, $3$, $12$, $13$, $1^{2}2$, $1^{2}3$, $123$, $1^{3}2$, $1^{2}23$, $1^{3}23$, $1^{4}23$, $1^{4}2^{2}3$\\
Nr. $13$ with $13$ positive roots:\\
$1$, $2$, $3$, $12$, $13$, $1^{2}2$, $1^{2}3$, $123$, $1^{2}23$, $1^{3}23$, $1^{2}2^{2}3$, $1^{3}2^{2}3$, $1^{4}2^{2}3$\\
Nr. $14$ with $13$ positive roots:\\
$1$, $2$, $3$, $12$, $13$, $1^{2}2$, $123$, $13^{2}$, $1^{2}23$, $123^{2}$, $1^{2}23^{2}$, $1^{3}23^{2}$, $1^{3}2^{2}3^{2}$\\
Nr. $15$ with $14$ positive roots:\\
$1$, $2$, $3$, $12$, $13$, $1^{2}2$, $1^{2}3$, $123$, $1^{3}2$, $1^{2}23$, $1^{3}23$, $1^{4}23$, $1^{3}2^{2}3$, $1^{4}2^{2}3$\\
Nr. $16$ with $15$ positive roots:\\
$1$, $2$, $3$, $12$, $13$, $1^{2}2$, $1^{2}3$, $123$, $1^{3}2$, $1^{2}23$, $1^{3}23$, $1^{4}23$, $1^{3}2^{2}3$, $1^{4}2^{2}3$, $1^{5}2^{2}3$\\
Nr. $17$ with $16$ positive roots:\\
$1$, $2$, $3$, $12$, $13$, $1^{2}2$, $1^{2}3$, $123$, $1^{3}2$, $1^{2}23$, $1^{3}2^{2}$, $1^{3}23$, $1^{4}23$, $1^{3}2^{2}3$, $1^{4}2^{2}3$, $1^{5}2^{2}3$\\
Nr. $18$ with $16$ positive roots:\\
$1$, $2$, $3$, $12$, $23$, $1^{2}2$, $123$, $1^{3}2$, $1^{2}23$, $12^{2}3$, $1^{3}23$, $1^{2}2^{2}3$, $1^{3}2^{2}3$, $1^{4}2^{2}3$, $1^{4}2^{3}3$, $1^{4}2^{3}3^{2}$\\
Nr. $19$ with $17$ positive roots:\\
$1$, $2$, $3$, $12$, $13$, $1^{2}2$, $1^{2}3$, $123$, $1^{3}2$, $1^{2}23$, $1^{4}2$, $1^{3}23$, $1^{4}23$, $1^{5}23$, $1^{4}2^{2}3$, $1^{5}2^{2}3$, $1^{6}2^{2}3$\\
Nr. $20$ with $17$ positive roots:\\
$1$, $2$, $3$, $12$, $13$, $1^{2}2$, $1^{2}3$, $123$, $1^{3}2$, $1^{2}23$, $1^{3}2^{2}$, $1^{3}23$, $1^{4}23$, $1^{3}2^{2}3$, $1^{4}2^{2}3$, $1^{5}2^{2}3$, $1^{5}2^{2}3^{2}$\\
Nr. $21$ with $17$ positive roots:\\
$1$, $2$, $3$, $12$, $13$, $1^{2}2$, $123$, $1^{3}2$, $1^{2}23$, $1^{3}23$, $1^{2}2^{2}3$, $1^{3}2^{2}3$, $1^{4}2^{2}3$, $1^{5}2^{2}3$, $1^{5}2^{3}3$, $1^{5}2^{3}3^{2}$,
$1^{6}2^{3}3^{2}$\\
Nr. $22$ with $18$ positive roots:\\
$1$, $2$, $3$, $12$, $13$, $1^{2}2$, $1^{2}3$, $123$, $1^{3}2$, $1^{2}23$, $1^{3}2^{2}$, $1^{3}23$, $1^{4}23$, $1^{3}2^{2}3$, $1^{4}2^{2}3$, $1^{5}2^{2}3$, $1^{5}2^{3}3$,
$1^{6}2^{3}3$\\
Nr. $23$ with $18$ positive roots:\\
$1$, $2$, $3$, $12$, $13$, $23$, $1^{2}2$, $123$, $1^{3}2$, $1^{2}23$, $12^{2}3$, $1^{3}23$, $1^{2}2^{2}3$, $1^{3}2^{2}3$, $1^{4}2^{2}3$, $1^{3}2^{3}3$, $1^{4}2^{3}3$,
$1^{4}2^{3}3^{2}$\\
Nr. $24$ with $19$ positive roots:\\
$1$, $2$, $3$, $12$, $13$, $1^{2}2$, $123$, $1^{3}2$, $1^{2}23$, $1^{4}2$, $1^{3}23$, $1^{4}23$, $1^{3}2^{2}3$, $1^{4}2^{2}3$, $1^{5}2^{2}3$, $1^{6}2^{2}3$, $1^{6}2^{3}3$,
$1^{7}2^{3}3$, $1^{7}2^{3}3^{2}$\\
Nr. $25$ with $19$ positive roots:\\
$1$, $2$, $3$, $12$, $23$, $1^{2}2$, $123$, $1^{3}2$, $1^{2}23$, $1^{4}2$, $1^{3}23$, $1^{2}2^{2}3$, $1^{4}23$, $1^{3}2^{2}3$, $1^{4}2^{2}3$, $1^{5}2^{2}3$, $1^{6}2^{2}3$,
$1^{6}2^{3}3$, $1^{6}2^{3}3^{2}$\\
Nr. $26$ with $19$ positive roots:\\
$1$, $2$, $3$, $12$, $13$, $23$, $1^{2}2$, $12^{2}$, $123$, $1^{3}2$, $1^{2}23$, $12^{2}3$, $1^{3}23$, $1^{2}2^{2}3$, $1^{3}2^{2}3$, $1^{4}2^{2}3$, $1^{3}2^{3}3$, $1^{4}2^{3}3$,
$1^{4}2^{3}3^{2}$\\
Nr. $27$ with $19$ positive roots:\\
$1$, $2$, $3$, $12$, $13$, $23$, $1^{2}2$, $123$, $1^{3}2$, $1^{2}23$, $12^{2}3$, $1^{3}2^{2}$, $1^{3}23$, $1^{2}2^{2}3$, $1^{3}2^{2}3$, $1^{4}2^{2}3$, $1^{3}2^{3}3$,
$1^{4}2^{3}3$, $1^{4}2^{3}3^{2}$\\
Nr. $28$ with $19$ positive roots:\\
$1$, $2$, $3$, $12$, $23$, $1^{2}2$, $123$, $1^{3}2$, $1^{2}23$, $1^{3}2^{2}$, $1^{3}23$, $1^{2}2^{2}3$, $1^{3}2^{2}3$, $1^{4}2^{2}3$, $1^{3}2^{3}3$, $1^{4}2^{3}3$,
$1^{5}2^{3}3$, $1^{6}2^{3}3$, $1^{6}2^{4}3$\\
Nr. $29$ with $20$ positive roots:\\
$1$, $2$, $3$, $12$, $13$, $1^{2}2$, $123$, $1^{3}2$, $1^{2}23$, $1^{4}2$, $1^{3}2^{2}$, $1^{3}23$, $1^{4}23$, $1^{3}2^{2}3$, $1^{5}2^{2}$, $1^{4}2^{2}3$, $1^{5}2^{2}3$,
$1^{6}2^{2}3$, $1^{6}2^{3}3$, $1^{7}2^{3}3$\\
Nr. $30$ with $20$ positive roots:\\
$1$, $2$, $3$, $12$, $13$, $1^{2}2$, $123$, $1^{3}2$, $1^{2}23$, $1^{4}2$, $1^{3}2^{2}$, $1^{3}23$, $1^{4}23$, $1^{3}2^{2}3$, $1^{4}2^{2}3$, $1^{5}2^{2}3$, $1^{6}2^{2}3$,
$1^{6}2^{3}3$, $1^{7}2^{3}3$, $1^{7}2^{3}3^{2}$\\
Nr. $31$ with $20$ positive roots:\\
$1$, $2$, $3$, $12$, $13$, $1^{2}2$, $1^{2}3$, $123$, $1^{3}2$, $1^{2}23$, $1^{3}2^{2}$, $1^{3}23$, $1^{2}2^{2}3$, $1^{4}23$, $1^{3}2^{2}3$, $1^{4}2^{2}3$, $1^{5}2^{2}3$,
$1^{4}2^{3}3$, $1^{5}2^{3}3$, $1^{6}2^{3}3^{2}$\\
Nr. $32$ with $21$ positive roots:\\
$1$, $2$, $3$, $12$, $13$, $1^{2}2$, $123$, $1^{3}2$, $1^{2}23$, $1^{4}2$, $1^{3}2^{2}$, $1^{3}23$, $1^{4}23$, $1^{3}2^{2}3$, $1^{5}2^{2}$, $1^{4}2^{2}3$, $1^{5}2^{2}3$,
$1^{6}2^{2}3$, $1^{6}2^{3}3$, $1^{7}2^{3}3$, $1^{7}2^{3}3^{2}$\\
Nr. $33$ with $21$ positive roots:\\
$1$, $2$, $3$, $12$, $13$, $1^{2}2$, $1^{2}3$, $123$, $1^{3}2$, $1^{2}23$, $1^{3}2^{2}$, $1^{3}23$, $1^{2}2^{2}3$, $1^{4}23$, $1^{3}2^{2}3$, $1^{4}2^{2}3$, $1^{5}2^{2}3$,
$1^{4}2^{3}3$, $1^{5}2^{3}3$, $1^{6}2^{3}3$, $1^{6}2^{3}3^{2}$\\
Nr. $34$ with $21$ positive roots:\\
$1$, $2$, $3$, $12$, $13$, $1^{2}2$, $1^{2}3$, $123$, $1^{3}2$, $1^{2}23$, $1^{3}2^{2}$, $1^{3}23$, $1^{4}23$, $1^{3}2^{2}3$, $1^{4}2^{2}3$, $1^{5}2^{2}3$, $1^{5}2^{3}3$,
$1^{5}2^{2}3^{2}$, $1^{6}2^{3}3$, $1^{6}2^{3}3^{2}$, $1^{7}2^{3}3^{2}$\\
Nr. $35$ with $22$ positive roots:\\
$1$, $2$, $3$, $12$, $13$, $1^{2}2$, $1^{2}3$, $123$, $1^{3}2$, $1^{2}23$, $1^{3}2^{2}$, $1^{3}23$, $1^{2}2^{2}3$, $1^{4}23$, $1^{3}2^{2}3$, $1^{4}2^{2}3$, $1^{5}2^{2}3$,
$1^{4}2^{3}3$, $1^{5}2^{3}3$, $1^{5}2^{2}3^{2}$, $1^{5}2^{3}3^{2}$, $1^{6}2^{3}3^{2}$\\
Nr. $36$ with $25$ positive roots:\\
$1$, $2$, $3$, $12$, $13$, $1^{2}2$, $1^{2}3$, $123$, $1^{3}2$, $1^{2}23$, $1^{4}2$, $1^{3}2^{2}$, $1^{3}23$, $1^{4}23$, $1^{3}2^{2}3$, $1^{5}2^{2}$, $1^{5}23$, $1^{4}2^{2}3$,
$1^{5}2^{2}3$, $1^{6}2^{2}3$, $1^{7}2^{2}3$, $1^{6}2^{3}3$, $1^{7}2^{3}3$, $1^{8}2^{3}3$, $1^{8}2^{3}3^{2}$\\
Nr. $37$ with $25$ positive roots:\\
$1$, $2$, $3$, $12$, $13$, $1^{2}2$, $1^{2}3$, $123$, $1^{3}2$, $1^{2}23$, $1^{4}2$, $1^{3}23$, $1^{4}23$, $1^{3}2^{2}3$, $1^{5}23$, $1^{4}2^{2}3$, $1^{5}2^{2}3$, $1^{6}2^{2}3$,
$1^{7}2^{2}3$, $1^{6}2^{3}3$, $1^{7}2^{3}3$, $1^{8}2^{3}3$, $1^{7}2^{3}3^{2}$, $1^{8}2^{3}3^{2}$, $1^{9}2^{3}3^{2}$\\
Nr. $38$ with $25$ positive roots:\\
$1$, $2$, $3$, $12$, $13$, $1^{2}2$, $1^{2}3$, $12^{2}$, $123$, $1^{3}2$, $1^{2}23$, $12^{2}3$, $1^{3}23$, $1^{2}2^{2}3$, $1^{4}23$, $1^{3}2^{2}3$, $1^{4}2^{2}3$, $1^{3}2^{3}3$,
$1^{3}2^{2}3^{2}$, $1^{4}2^{3}3$, $1^{5}2^{3}3$, $1^{4}2^{3}3^{2}$, $1^{5}2^{3}3^{2}$, $1^{6}2^{3}3^{2}$, $1^{7}2^{4}3^{2}$\\
Nr. $39$ with $25$ positive roots:\\
$1$, $2$, $3$, $12$, $13$, $1^{2}2$, $1^{2}3$, $123$, $1^{3}2$, $1^{2}23$, $1^{3}2^{2}$, $1^{3}23$, $1^{2}2^{2}3$, $1^{4}23$, $1^{3}2^{2}3$, $1^{4}2^{2}3$, $1^{5}2^{2}3$,
$1^{4}2^{3}3$, $1^{5}2^{3}3$, $1^{5}2^{2}3^{2}$, $1^{6}2^{3}3$, $1^{5}2^{3}3^{2}$, $1^{6}2^{3}3^{2}$, $1^{7}2^{3}3^{2}$, $1^{7}2^{4}3^{2}$\\
Nr. $40$ with $26$ positive roots:\\
$1$, $2$, $3$, $12$, $13$, $1^{2}2$, $1^{2}3$, $123$, $1^{3}2$, $1^{2}23$, $1^{4}2$, $1^{3}2^{2}$, $1^{3}23$, $1^{4}23$, $1^{3}2^{2}3$, $1^{5}2^{2}$, $1^{5}23$, $1^{4}2^{2}3$,
$1^{5}2^{2}3$, $1^{6}2^{2}3$, $1^{7}2^{2}3$, $1^{6}2^{3}3$, $1^{7}2^{3}3$, $1^{8}2^{3}3$, $1^{7}2^{3}3^{2}$, $1^{8}2^{3}3^{2}$\\
Nr. $41$ with $26$ positive roots:\\
$1$, $2$, $3$, $12$, $13$, $1^{2}2$, $1^{2}3$, $123$, $1^{3}2$, $1^{2}23$, $1^{4}2$, $1^{3}2^{2}$, $1^{3}23$, $1^{4}23$, $1^{3}2^{2}3$, $1^{5}23$, $1^{4}2^{2}3$, $1^{5}2^{2}3$,
$1^{6}2^{2}3$, $1^{7}2^{2}3$, $1^{6}2^{3}3$, $1^{7}2^{3}3$, $1^{8}2^{3}3$, $1^{7}2^{3}3^{2}$, $1^{8}2^{3}3^{2}$, $1^{9}2^{3}3^{2}$\\
Nr. $42$ with $27$ positive roots:\\
$1$, $2$, $3$, $12$, $13$, $1^{2}2$, $1^{2}3$, $123$, $1^{3}2$, $1^{2}23$, $1^{4}2$, $1^{3}2^{2}$, $1^{3}23$, $1^{4}23$, $1^{3}2^{2}3$, $1^{5}2^{2}$, $1^{5}23$, $1^{4}2^{2}3$,
$1^{5}2^{2}3$, $1^{6}2^{2}3$, $1^{7}2^{2}3$, $1^{6}2^{3}3$, $1^{7}2^{3}3$, $1^{8}2^{3}3$, $1^{7}2^{3}3^{2}$, $1^{8}2^{3}3^{2}$, $1^{9}2^{3}3^{2}$\\
Nr. $43$ with $27$ positive roots:\\
$1$, $2$, $3$, $12$, $13$, $1^{2}2$, $1^{2}3$, $123$, $1^{3}2$, $1^{2}23$, $1^{4}2$, $1^{3}2^{2}$, $1^{3}23$, $1^{4}23$, $1^{3}2^{2}3$, $1^{5}23$, $1^{4}2^{2}3$, $1^{5}2^{2}3$,
$1^{6}2^{2}3$, $1^{5}2^{2}3^{2}$, $1^{7}2^{2}3$, $1^{6}2^{3}3$, $1^{7}2^{3}3$, $1^{8}2^{3}3$, $1^{7}2^{3}3^{2}$, $1^{8}2^{3}3^{2}$, $1^{9}2^{3}3^{2}$\\
Nr. $44$ with $27$ positive roots:\\
$1$, $2$, $3$, $12$, $13$, $1^{2}2$, $1^{2}3$, $123$, $1^{3}2$, $1^{2}23$, $1^{4}2$, $1^{3}2^{2}$, $1^{3}23$, $1^{4}23$, $1^{3}2^{2}3$, $1^{5}23$, $1^{4}2^{2}3$, $1^{5}2^{2}3$,
$1^{6}2^{2}3$, $1^{7}2^{2}3$, $1^{6}2^{3}3$, $1^{7}2^{3}3$, $1^{7}2^{2}3^{2}$, $1^{8}2^{3}3$, $1^{7}2^{3}3^{2}$, $1^{8}2^{3}3^{2}$, $1^{9}2^{3}3^{2}$\\
Nr. $45$ with $28$ positive roots:\\
$1$, $2$, $3$, $12$, $13$, $1^{2}2$, $1^{2}3$, $123$, $1^{3}2$, $1^{2}23$, $1^{4}2$, $1^{3}2^{2}$, $1^{3}23$, $1^{4}23$, $1^{3}2^{2}3$, $1^{5}2^{2}$, $1^{5}23$, $1^{4}2^{2}3$,
$1^{5}2^{2}3$, $1^{6}2^{2}3$, $1^{5}2^{2}3^{2}$, $1^{7}2^{2}3$, $1^{6}2^{3}3$, $1^{7}2^{3}3$, $1^{8}2^{3}3$, $1^{7}2^{3}3^{2}$, $1^{8}2^{3}3^{2}$, $1^{9}2^{3}3^{2}$\\
Nr. $46$ with $28$ positive roots:\\
$1$, $2$, $3$, $12$, $13$, $1^{2}2$, $1^{2}3$, $123$, $1^{3}2$, $1^{2}23$, $1^{4}2$, $1^{3}2^{2}$, $1^{3}23$, $1^{4}23$, $1^{3}2^{2}3$, $1^{5}23$, $1^{4}2^{2}3$, $1^{5}2^{2}3$,
$1^{6}2^{2}3$, $1^{5}2^{2}3^{2}$, $1^{7}2^{2}3$, $1^{6}2^{3}3$, $1^{7}2^{3}3$, $1^{8}2^{3}3$, $1^{7}2^{3}3^{2}$, $1^{8}2^{3}3^{2}$, $1^{9}2^{3}3^{2}$, $1^{9}2^{4}3^{2}$\\
Nr. $47$ with $28$ positive roots:\\
$1$, $2$, $3$, $12$, $13$, $1^{2}2$, $1^{2}3$, $123$, $1^{3}2$, $1^{2}23$, $1^{4}2$, $1^{3}2^{2}$, $1^{3}23$, $1^{4}23$, $1^{3}2^{2}3$, $1^{5}23$, $1^{4}2^{2}3$, $1^{5}2^{2}3$,
$1^{6}2^{2}3$, $1^{5}2^{2}3^{2}$, $1^{7}2^{2}3$, $1^{6}2^{3}3$, $1^{7}2^{3}3$, $1^{8}2^{3}3$, $1^{7}2^{3}3^{2}$, $1^{8}2^{3}3^{2}$, $1^{9}2^{3}3^{2}$, $1^{11}2^{4}3^{2}$\\
Nr. $48$ with $29$ positive roots:\\
$1$, $2$, $3$, $12$, $13$, $1^{2}2$, $1^{2}3$, $123$, $1^{3}2$, $1^{2}23$, $1^{4}2$, $1^{3}2^{2}$, $1^{3}23$, $1^{4}23$, $1^{3}2^{2}3$, $1^{5}2^{2}$, $1^{5}23$, $1^{4}2^{2}3$,
$1^{5}2^{2}3$, $1^{6}2^{2}3$, $1^{5}2^{2}3^{2}$, $1^{7}2^{2}3$, $1^{6}2^{3}3$, $1^{7}2^{3}3$, $1^{7}2^{2}3^{2}$, $1^{8}2^{3}3$, $1^{7}2^{3}3^{2}$, $1^{8}2^{3}3^{2}$,
$1^{9}2^{3}3^{2}$\\
Nr. $49$ with $29$ positive roots:\\
$1$, $2$, $3$, $12$, $13$, $1^{2}2$, $1^{2}3$, $123$, $1^{3}2$, $1^{2}23$, $1^{4}2$, $1^{3}2^{2}$, $1^{3}23$, $1^{4}23$, $1^{3}2^{2}3$, $1^{5}2^{2}$, $1^{5}23$, $1^{4}2^{2}3$,
$1^{5}2^{2}3$, $1^{6}2^{2}3$, $1^{5}2^{2}3^{2}$, $1^{7}2^{2}3$, $1^{6}2^{3}3$, $1^{7}2^{3}3$, $1^{8}2^{3}3$, $1^{7}2^{3}3^{2}$, $1^{8}2^{3}3^{2}$, $1^{9}2^{3}3^{2}$,
$1^{9}2^{4}3^{2}$\\
Nr. $50$ with $29$ positive roots:\\
$1$, $2$, $3$, $12$, $13$, $1^{2}2$, $1^{2}3$, $123$, $1^{3}2$, $1^{2}23$, $1^{4}2$, $1^{3}2^{2}$, $1^{3}23$, $1^{4}23$, $1^{3}2^{2}3$, $1^{5}2^{2}$, $1^{5}23$, $1^{4}2^{2}3$,
$1^{5}2^{2}3$, $1^{6}2^{2}3$, $1^{5}2^{2}3^{2}$, $1^{7}2^{2}3$, $1^{6}2^{3}3$, $1^{7}2^{3}3$, $1^{8}2^{3}3$, $1^{7}2^{3}3^{2}$, $1^{8}2^{3}3^{2}$, $1^{9}2^{3}3^{2}$,
$1^{11}2^{4}3^{2}$\\
Nr. $51$ with $30$ positive roots:\\
$1$, $2$, $3$, $12$, $13$, $1^{2}2$, $1^{2}3$, $123$, $1^{3}2$, $1^{2}23$, $1^{4}2$, $1^{3}2^{2}$, $1^{3}23$, $1^{4}23$, $1^{3}2^{2}3$, $1^{5}2^{2}$, $1^{5}23$, $1^{4}2^{2}3$,
$1^{5}2^{2}3$, $1^{6}2^{2}3$, $1^{5}2^{2}3^{2}$, $1^{7}2^{2}3$, $1^{6}2^{3}3$, $1^{7}2^{3}3$, $1^{7}2^{2}3^{2}$, $1^{8}2^{3}3$, $1^{7}2^{3}3^{2}$, $1^{8}2^{3}3^{2}$,
$1^{9}2^{3}3^{2}$, $1^{9}2^{4}3^{2}$\\
Nr. $52$ with $31$ positive roots:\\
$1$, $2$, $3$, $12$, $13$, $1^{2}2$, $1^{2}3$, $123$, $1^{3}2$, $1^{2}23$, $1^{4}2$, $1^{3}23$, $1^{5}2$, $1^{4}23$, $1^{6}2$, $1^{5}23$, $1^{4}2^{2}3$, $1^{6}23$, $1^{5}2^{2}3$,
$1^{7}23$, $1^{6}2^{2}3$, $1^{7}2^{2}3$, $1^{8}2^{2}3$, $1^{9}2^{2}3$, $1^{10}2^{2}3$, $1^{9}2^{3}3$, $1^{10}2^{3}3$, $1^{11}2^{3}3$, $1^{10}2^{3}3^{2}$, $1^{11}2^{3}3^{2}$,
$1^{12}2^{3}3^{2}$\\
Nr. $53$ with $31$ positive roots:\\
$1$, $2$, $3$, $12$, $13$, $1^{2}2$, $1^{2}3$, $123$, $1^{3}2$, $1^{2}23$, $1^{4}2$, $1^{3}2^{2}$, $1^{3}23$, $1^{4}23$, $1^{3}2^{2}3$, $1^{5}2^{2}$, $1^{5}23$, $1^{4}2^{2}3$,
$1^{5}2^{2}3$, $1^{6}2^{2}3$, $1^{5}2^{2}3^{2}$, $1^{7}2^{2}3$, $1^{6}2^{3}3$, $1^{7}2^{3}3$, $1^{7}2^{2}3^{2}$, $1^{8}2^{3}3$, $1^{7}2^{3}3^{2}$, $1^{8}2^{3}3^{2}$,
$1^{9}2^{3}3^{2}$, $1^{9}2^{4}3^{2}$, $1^{11}2^{4}3^{2}$\\
Nr. $54$ with $34$ positive roots:\\
$1$, $2$, $3$, $12$, $13$, $1^{2}2$, $1^{2}3$, $123$, $1^{3}2$, $1^{2}23$, $1^{4}2$, $1^{3}2^{2}$, $1^{3}23$, $1^{4}23$, $1^{3}2^{2}3$, $1^{5}2^{2}$, $1^{5}23$, $1^{4}2^{2}3$,
$1^{5}2^{2}3$, $1^{6}2^{2}3$, $1^{5}2^{3}3$, $1^{7}2^{2}3$, $1^{6}2^{3}3$, $1^{7}2^{3}3$, $1^{8}2^{3}3$, $1^{7}2^{3}3^{2}$, $1^{8}2^{4}3$, $1^{8}2^{3}3^{2}$, $1^{9}2^{4}3$,
$1^{9}2^{3}3^{2}$, $1^{9}2^{4}3^{2}$, $1^{11}2^{4}3^{2}$, $1^{11}2^{5}3^{2}$, $1^{12}2^{5}3^{2}$\\
Nr. $55$ with $37$ positive roots:\\
$1$, $2$, $3$, $12$, $13$, $1^{2}2$, $1^{2}3$, $123$, $1^{3}2$, $1^{2}23$, $1^{4}2$, $1^{3}2^{2}$, $1^{3}23$, $1^{4}23$, $1^{3}2^{2}3$, $1^{5}2^{2}$, $1^{5}23$, $1^{4}2^{2}3$,
$1^{5}2^{2}3$, $1^{6}2^{2}3$, $1^{5}2^{3}3$, $1^{7}2^{2}3$, $1^{6}2^{3}3$, $1^{7}2^{3}3$, $1^{8}2^{3}3$, $1^{7}2^{3}3^{2}$, $1^{9}2^{3}3$, $1^{8}2^{4}3$, $1^{8}2^{3}3^{2}$,
$1^{9}2^{4}3$, $1^{9}2^{3}3^{2}$, $1^{10}2^{4}3$, $1^{9}2^{4}3^{2}$, $1^{11}2^{4}3^{2}$, $1^{11}2^{5}3^{2}$, $1^{12}2^{5}3^{2}$, $1^{13}2^{5}3^{2}$
\end{tiny}

\end{appendix}

\bibliographystyle{amsalpha}
\bibliography{quantum}

\end{document}